\documentclass{amsart}
\usepackage[foot]{amsaddr}
\usepackage{amssymb,latexsym,amsmath,amsfonts,graphicx,xcolor,pb-diagram}
\usepackage[mathscr]{eucal}
\usepackage{tikz, tikz-cd}
\usepackage{relsize}
\usepackage{slashed}
\newcommand{\mpar}{\par \medskip \par }
\newcommand{\Div}{\operatorname{div}}
\newcommand{\Image}{\operatorname{im}}
\newcommand{\Ker}{\operatorname{ker}}

\newcommand{\DIV}{\operatorname{Div}}
\newcommand{\Hess}{\operatorname{Hess}}
\newcommand{\Dsp}{\operatorname{dsp}}

\newcommand{\Deg}{\operatorname{deg}}

\numberwithin{equation}{section}

\swapnumbers

\theoremstyle{definition}

\newtheorem{theorem}{Theorem}[section]
\newtheorem{lemma}[theorem]{Lemma}
\newtheorem{corollary}[theorem]{Corollary}
\newtheorem{proposition}[theorem]{Proposition}

\newtheorem{definition}[theorem]{Definition}

\newtheorem{remark}[theorem]{Remark}
\newtheorem{example}[theorem]{Example}

\makeindex

\begin{document}
\title{Second Order Differential Operators on Graphs}

\author{Peter March}

\begin{abstract}
The commutator $[X, Y]$ of a pair of vector fields on a graph $G$ is not a vector field in general, but rather a second order differential operator. We investigate this departure from the classical case of vector fields on a manifold by examining the geometry of balls of radius two in $G,$ concentrating on the set of paths of length two connecting a given vertex with the center of the ball. There is a natural surjection $a\colon\mathcal{X}(T^2(G))\to DOp^2(G)$ from the space of sections of the second tangent bundle to the space of second order differential operators, whose kernel reflects the geometry of these balls. Using this map we derive bounds on the dimension of $DOp^2(G)$ in terms of the cyclomatic numbers of balls of radius two, find canonical forms for $DOp^2(G),$ provide formulas for the adjoint operator $a(Z)^*\negthinspace,$ where $Z\in\mathcal{X}(T^2(G)),$ and give necessary and sufficient conditions on $Z$ that $a(Z)^*\in DOp^2(G)$ and that $a(Z)$ is self adjoint. Consequences of this work are a Helmholtz-type decomposition of $\mathcal{X}(T^2(G))$ and a necessary and sufficient condition for $[X, Y]$ to be a vector field.
\end{abstract}

\address{Department of Mathematics\\
Rutgers University\\
Hill Center - Busch Campus\\
110 Frelinghuysen Road\\
Piscataway, NJ 08854-8019}

\email{march@math.rutgers.edu}

\maketitle
\section{Introduction}
Our goal is to use the intrinsic geometry of a graph to understand the structure of second order differential operators acting on functions of the vertices of a graph $G.$ 
\mpar
There is a considerable literature devoted to studying operators on graphs that has several distinct threads, such as scientific computing \cite{A}, \cite{AFW}, image processing \cite{C}, \cite{G}, and medicine \cite{LECCS}. Generally speaking, the kinds of graphs considered in the literature, and the kinds of operators on them, are tailored to the needs of the various disciplines. 

\mpar
In contrast to that literature, our motivation is entirely geometric. We are interested in studying linear operators $L$ defined on a finite simple graph $G$ where the value of $L\phi(i)$ only depends on the values of $\phi(j)-\phi(i),$ for $d(i,j)=1,2.$ Since we make no structural assumptions about $G$ we have to develop intrinsic ways of expressing differences of functions across edges and calculating linear combinations of differences of such differences.

\mpar
To that end, we use ideas, terminology, and results of \cite{M1} and \cite{M2} that were developed within the framework of tangent graphs. We recall these ideas sequentially in the narrative as needed. However, since the tangent graph is not a standard notion, we also provide a summary in Appendix A of relevant graph theoretic material as expressed within this framework. For the reader's convenience, we italicize the first appearance of new terms as a way of referring them to the appendix if the terms are not familiar. 

\mpar
Recall that \textit{first order differential operators} $L$ on a finite simple graph $G=(V_G, E_G)$ are linear transformations $L\colon C(G)\to\mathbb{R}$ of the form,
$$
L\phi(i) = \sum_{j\in V_G} L(i,j)\phi(j),
$$
such that, (1) $L(i,j)=0$ if $d(i,j) > 1,$ where $d(i,j)$ is the length of the shortest walk in $G$ between $i$ and $j,$ and (2) $L$ annihilates functions constant on each connected component of $G.$ Observe that item (2) is equivalent to the condition that $\sum_{j\in V_G} L(i,j)=0$ for every vertex $i$ in $G$.

\mpar
First order differential operators $L\in DOp^1(G)$ are in one-to-one correspondence with \textit{vector fields} $X\in\mathcal{X}(G)$ on $G$ via the formula,
$$
L\phi(i)=\sum_{j\in V_G}L(i,j)\phi(j)=\sum_{\pi(u)=i}X(u)d\phi(u) =X\phi(i).
$$
Let us pause to elaborate the terms in the formula above. Here $u$ is a vertex of the \textit{tangent graph} $tG$ and each such vertex is an ordered pair of vertices,
$$
u=(i,j)=ij
$$ 
for some edge $\{i,j\}$ of $G.$ Observe there are two vertices of $tG$ for each edge of $G$ and recall that $\{ij,kl\}$ is an edge in $tG$ provided $j=k$ or $i=l,$ meaning two directed edges of $G$ are adjacent in $tG$ provided the end point of one is the base point of the other. There are natural projections $\pi, \pi_+\colon V_{tG}\to V_G$ defined by the rules $\pi(ij)=i$ and $\pi_+(ij)=j,$ and an involution $\sigma\colon V_{tG}\to V_{tG}$ given by the formula $\sigma(ij)=ji.$ All three of these maps are graph homomorphisms. In these terms,
$$
d\phi(u)=\phi(\pi_+(u))-\phi(\pi(u))
$$
is the difference of $\phi$ across a directed edge and the second sum above is over all directed edges with base point $i\in V_G.$
\mpar

So, a vector field is a function $X\colon V_{tG}\to\mathbb{R},$ and vice versa, and therefore,
$$
DOp^1(G)\cong\mathcal{X}(G)\cong C(tG).
$$

\mpar
Recall that \textit{second order differential operators} are linear transformations, 
$$
L\phi(i)=\sum_{j\in V_G} L(i,j)\phi(j)
$$ 
such that (1) $L(i,j)=0$ if $d(i,j)>2$  and (2) $L$ annihilates functions constant on each connected component of $G.$ 

\mpar
It's natural to ask if the space of all such operators, $DOp^2(G),$ can be characterized in geometric terms in the same sense as $DOp^1(G).$ The answer is a qualified yes, and understanding the qualifications is the purpose of this work. The main idea is that vector fields on the tangent graph $tG,$ or equivalently sections of the \textit{second tangent bundle} $\mathcal{X}(T^2(G)),$ project to second order differential operators on $G.$

\mpar
In Section 2 we consider two concrete examples of such operators that we hope will excite the reader's interest. First, it's a simple observation that if $X$ and $Y$ are vector fields then they can be composed as first order differential operators. We show that the commutator $[X,Y]$ is not in general a vector field but rather a second order differential operator. Second, every graph $G$ supports a notion of \textit{gradient} $\nabla,$ \textit{divergence}  $\Div,$ and \textit{Laplacian} $\Delta =\Div\circ\,\nabla.$ Because $\Delta\phi$ only involves differences of $\phi$ along directed edges with a given base point it is a first order operator in this theory. Since the tangent graph is itself a finite simple graph it has a Laplacian, denoted $\Delta_{tG},$ and we find that,
$$
L\phi = \Div\circ\,\Delta_{tG}\circ\nabla\phi
$$
is a natural second order differential operator, an explicit expression of which was derived in \cite{M1}.

\mpar
The observation that commutators of vector fields aren't necessarily vector fields is our jumping off point - what causes the failure of the second derivatives in the commutator to cancel? The cause is the lack of local spatial homogeneity in a graph and this, in turn, is a result of the discreteness of space.

\mpar
The discrete nature of a graph allows for significant local variability of space compared to the continuum. For example, the unit sphere $S(i, 1)$ at vertex $i$ can be an arbitrary graph on $\Deg(i)$ vertices and it can vary significantly from vertex to vertex. (See Remark 3.6.3). In contrast, there is only one choice for the unit sphere $S(x,1)$ at a point $x\in\mathbb{R}^d$ and it is the same choice at every other point. In contrast, while the unit ball at vertex $i$ is structured, being a star graph $K_{1,d}$ where $d$ is the degree of vertex $i$, the ball of radius two at $i$ is unstructured, since it can contain a subgraph isomorphic to any graph on $d$ vertices. Since $\Deg(i)$ and $S(i,1)$ may vary significantly from vertex to vertex, the generic graph is as locally inhomogeneous as possible.
 
\mpar
One way around this structural variability is to introduce $G^2,$  the \textit{second power graph} of $G,$ namely $G$ augmented by new edges between vertices that are distance two apart. We'll see in Proposition 3.3 that $DOp^2(G)\cong\mathcal{X}(G^2), $ so it's basically a tautology that vector fields on the second power graph of $G$ faithfully represent second order operators on $G.$ Thus, one could adopt the attitude that studying the geometry of $tG^2$ is equivalent to studying the structure of $DOp^2(G).$ However we strike a different attitude, namely that the tangent graph $tG$ has a much richer geometry than $G^2$ so there is more understanding to be gained by placing it at the center of our attention. 

\mpar
The key insight is this. On one hand, every pair of vertices of $G$ at distance two apart is joined by a least one pair of incident edges. All such pairs of incident edges in $G$ determine the same edge in $G^2.$  On the other hand, each pair of incident edges in $G$ determines a rectangle in the tangent graph $tG,$ and the rectangles are distinct from pair to pair. Thus, $G^2$ shows \textit{that} vertices of $G$ distance two apart are connected, but $tG$ shows precisely \textit{how} they are connected. This fact focuses attention on the geometry of balls of radius two and their dipole subgraphs, meaning the union of $P_2$ subgraphs whose endpoints are a fixed pair of vertices at distance two apart.

\mpar
It's natural to ask about the relationship between $G^2$ and $tG.$ Observe that in an obvious sense $G^2$ is smaller than $tG,$ since a single edge in $G^2$ may correspond to multiple rectangles in $tG.$  However, there are no injective graph homomorphisms from $G^2$ to $tG$ or surjective graph homomorphisms from $tG$ to $G^2,$ in general, as simple examples show. Similarly for injections from $tG^2$ to $t(tG),$ and surjections from $t(tG)$ to $tG^2.$ Nevertheless, there is a natural surjection from $\mathcal{X}(T^2G)$ to $DOp^2(G)$ although it is not functorial, meaning it doesn't follow from a homomorphism of the underlying graphs. Here $\mathcal{X}(T^2G)$ is the space of sections of \textit{second tangent bundle} of $G,$ 
$$
T^2_i(G)\cong \bigoplus_{\pi(u)=i}T_u(tG),
$$
the bundle whose fiber at vertex $i\in V_G$ is the vector space spanned by the indicator functions of vertices in $t^2G$ whose base point is $i.$ We know that $\mathcal{X}(T^2G)\cong\mathcal{X}(tG)$ so sections of the second tangent bundle, that is \textit{second order vetor fields,}  are in one-to-one correspondence with vector fields on the tangent graph $tG.$ 

\begin{definition}
The map $a\colon\mathcal{X}(T^2G)\to DOp^2(G)$ is defined by the rule,
$$
a(Z)\phi  =\sum_{\alpha\in V_{t^2G}}Z(\alpha)d^2\phi(\alpha)e_{\pi^2(\alpha)}.
$$
\end{definition}
\begin{remark}
\textit{In this formula, the vertex  $\alpha\in V_{t^2G}$ is a concatenation $\alpha=uv$ of vertices in an edge $\{u,v\}\in E_{tG},$ each of which is a concatenation of vertices $u=ij, v=kl$ of edges $\{i,j\}, \{l,k\}\in E_G.$ Recall that $ij/kl\in V_{tG}$ if and only if either $j=k$ or $i=l.$ Then $d^2\phi(\alpha)$ is the double difference,
\begin{align*}
d^2\phi(\alpha) & =d\phi(v)-d\phi(u)\\
& = (\phi(l)-\phi(k))-(\phi(j)-\phi(i)),
\end{align*}
and $\pi^2(\alpha) = \pi(u) = i$ is the base point of $\alpha$. The function $e_i$ is the indicator function of vertex $i,$ so $e_{\pi^2(\alpha)}$ is the indicator of the base point of $\alpha.$}
\end{remark}

\mpar
We'll see that $a$ is surjective and has a non-empty kernel except in trivial cases.  Observe that any subspace $F$ of $\mathcal{X}(T^2(G))$ such that $a\colon F\to DOp^2(G)$ is an isomorphism must satisfy $F\oplus \Ker(a) =\mathcal{X}(T^2(G))$. Thus, a canonical form is understood to be a geometrically meaningful injective map $c\colon DOp^2(G)\to\mathcal{X}(T^2(G))$ such that $a\negthinspace\circ\negthinspace c$ is the identity. Of course, the qualifying phrase \textit{geometrically meaningful} is subjective and we could just say that a canonical form is a choice of a subspace complimentary to $\Ker(a).$  But we insist on it because the salient point of a canonical form is the rule by which $DOp^2(G)$ is associated with a complementary subspace of $\Ker(a)$, not just the complementarity of the subspace itself.

\mpar
As mentioned previously, Section 2 of the paper is devoted to two examples of second order differential operators and general observations that follow from them. We'll see the simplest case of $DOp^2(P_2)$ where $P_2$ is a path of length two, is both interesting and informative. Working out these examples in detail helps fix ideas and familiarize notation. 

\mpar
In Section 3 we present a canonical form for $DOp^2(G)$ based on the notion of dipole subgraphs. We'll see that proper dipoles of $G$ are in one-to-one correspondence with edges in $E_{G^2}\setminus E_G$ so that dipoles record precisely how the vertices of an edge in $G^2$ are connected.

\mpar
In Section 4 we calculate the dimension of $DOp^2(G)$ in terms of the cyclomatic numbers of the balls of radius two in $G$, derive tight bounds on the dimension, and identify graphs where equality holds. 

\mpar
Recall that $\mathcal{X}(T^2(G))$ has a natural inner product. In Section 5 we characterize $\ker(a)$ as the kernel of a divergence-type operator $\DIV,$ specifically,
$$
a(Z)\phi(i) =\sum_{j\in V_G}\DIV Z_i(j)\phi(j)
$$
and derive a formula for $\DIV Z_i(j)$ explicitly in terms of $Z.$ An immediate consequence is a necessary and sufficient condition on $Z$ such that $a(Z)$ is self adjoint. We identify the adjoint of $\DIV$ as a Hessian-type operator $\Hess$ leading to a Helmholtz decomposition of $\mathcal{X}(T^2(G)).$ A second second canonical form follows from this decomposition. While the orthogonal projection onto $\ker(a)^\perp = \Image(\Hess)$ is highly non-local, we calculate it asymptotically via the method of gradient descent.

\mpar
In Section 6 we examine products of vector fields $XY\phi = X(Y\phi).$ The main result is an explicit formula for a second order vector field $X\circ Y\in\mathcal{X}(T^2(G))$ such that $a(X\circ Y)=XY.$ We use this formula to characterize pairs of vector fields such that their commutator is also a vector field. We also calculate a formula for the adjoint $(XY)^*.$

\mpar
In section 7 we look at operators generalizing the canonical second order differential operator $\Div\circ\,\Delta_{tG}\circ \nabla$ that was discussed in Section 2. We introduce the map $b\colon\mathcal{X}(tG)\to DOp^2(G)$ given by the formula,
$$
b(Z)=\Div\circ\, Z\circ\nabla,
$$
relate it to the map $a,$ and calculate its adjoint $b(Z)^*$ in terms of $Z.$

\mpar
Finally, in Section 8 we calculate the adjoint of $a(Z)$ as,
$$
a(Z)^*\phi(i)=a(W)\phi(i)+X\phi(i)+\nu(i)\phi(i)
$$
where $W\in\mathcal{X}(T^2(G)),\, X\in\mathcal{X}(G),$ and $\nu\in C(G)$ are explicit functions of $Z.$ This formula involves a complimentary notion of gradient and divergence on $tG$ which we call the \textit{acclivity} $\bigtriangledown_{tG}$ and the \textit{dispersion} $\Dsp_{tG}$, respectively, that may be of independent interest.

\mpar
A skeptic could reasonably ask: since second order differential operators are just band-limited matrices with entries indexed by $V_G\times V_G,$ why not deal with them directly and leave tangent graphs out of it? The answer rests on understanding what band limited means in the context of linear operators on general graphs. If $L\in DOp^2(G)$ then $L(i,j)=0$ if $d(i,j)>2,$ meaning it could be non-zero on $B(i,2),$ the ball of radius two centered at $i.$ As noted above, $B(i,2)$ can be wildly complicated for fixed $i$ and can differ significantly from vertex to vertex in the same graph. All the complications stemming from the lack of local spatial homogeneity in $G$ are summarized precisely by the second tangent graph $t^2G$ and  transferred faithfully to derived objects like $\mathcal{X}(tG)$ and $\mathcal{X}(T^2G).$ Generally speaking, there are many more paths of length two in $B(i,2)$ starting at $i$ than there are vertices in the ball, so there are bound to be redundancies in the map $a\colon \mathcal{X}(T^2G)\to DOp^2(G),$ redundancies which are captured in $\Ker(a).$ At the end of the day, this is nothing more than a covering argument: to understand a base object $B$, it's often convenient to introduce a covering $E\to B$ from a linearized object $E$ to $B$ and study its mapping properties.

\mpar
In the sequel we restrict attention to connected finite simple graphs. All our results extend in a natural way to general finite simple graphs by looking at connected components separately.
 
 \section{Two Examples}
At first glance, the notation of tangent graphs may seem overly elaborate but its benefit is to provide systematic and effective language for expressing differences of functions across directed edges of a graph $G$. When applied to the tangent graph $tG$ this language expresses double differences of functions across linked pairs of directed edges in $G$, etc. In particular, classical notions like gradient, divergence and Laplacian are naturally expressed in this language.

\mpar
Calculations like those in the first example are made throughout this work so it's helpful to dwell on them a little bit to gain some facility. This requires introducing a few ideas and certain amount of notation.

\mpar
Let $e_i,\, e_u$ and $e_\alpha$ be the indicator functions of vertices $i\in V_G,\, u\in V_{tG},$ and $\alpha\in V_{t^G},$ respectively. Then,
$$
C(G)=\langle e_i\mid i\in V_G\rangle, \,\,  C(tG)=\langle e_u\mid u\in V_{tG}\rangle, \,\, \text{and}\,\,C(t^2G)=\langle e_\alpha\mid u\in V_{t^2G}\rangle
$$ 
are the spaces of functions on the respective graphs.

\mpar
Let's look more closely at the relationship among vertices in $V_G, V_{tG}$ and $V_{t^2G}.$ (See Figure 1). Let $\alpha\in V_{t^2G},\,\, u=\pi(\alpha), v=\pi_+(\alpha),$ and let,
$$
i=\pi(u),\,\, j=\pi_+(u),\,\,k=\pi(v),\,\,\text{and}\,\, l=\pi_+(v).
$$
Then $\{u,v\}\in E_{tG},\,\,\{i,j\}, \{k,l\}\in E_G$ and we write $u=ij, v=kl$ and,
$$
\alpha=uv=ij/kl
$$
By definition either $j=k$ or $i=l.$ In the first case $\alpha=ij/jk$ and in the second case $\alpha=ij/ki.$ It's useful to think of $ij/jk$ as a forward translation of $ij$ to $jk$ and $ij/ki$ as a backward translation of $ij$ to $ki$. When $j=k$ and $i=l$ we think of $ij/ji$ as a reflection sending $ij$ to $ji.$ We say that an edge in $tG$ of the form $\{ij,ji\}$ is called \textit{central} and all other edges are called \textit{transverse.}

\mpar
\begin{figure}[h]
\begin{tikzpicture}
\draw[fill=black] (0,0) circle (2pt);
\draw[fill=black] (1,0) circle (2pt);
\draw[fill=black] (2,0) circle (2pt);
\node at (0,-0.30) {\textit{i}};
\node at (1,-0.3) {\textit{j}};
\node at (2,-0.3) {\textit{k}};
\draw[thick] (0,0) --( 1,0) -- (2, 0);

\draw[fill=black] (4,0) circle (2pt);
\draw[fill=black] (5,0) circle (2pt);
\node at (4,-0.3) {\textit{i}};
\node at (5,-0.3) {\textit{j}};
\draw[thick] (4,0) --( 5,0);
\draw[thick,->] (4,0) -- (4.6,0);

\draw[fill=black] (6,0) circle (2pt);
\draw[fill=black] (7,0) circle (2pt);
\node at (6,-0.3) {\textit{i}};
\node at (7,-0.3) {\textit{j}};
\draw[thick] (6,0) --( 7,0);
\draw[thick,-<] (6,0) -- (6.5,0);

\draw[fill=black] (8,0) circle (2pt);
\draw[fill=black] (9,0) circle (2pt);
\node at (8,-0.3) {\textit{j}};
\node at (9,-0.3) {\textit{k}};
\draw[thick] (8,0) --(9,0);
\draw[thick,->] (8,0) -- (8.6,0);

\draw[fill=black] (10,0) circle (2pt);
\draw[fill=black] (11,0) circle (2pt);
\node at (10,-0.3) {\textit{j}};
\node at (11,-0.3) {\textit{k}};
\draw[thick] (10,0) --( 11,0);
\draw[thick,-<] (10,0) -- (10.5,0);

\draw[fill=black] (0,-1) circle (2pt);
\draw[fill=black] (1,-1) circle (2pt);
\draw[fill=black] (2,-1) circle (2pt);
\node at (0,-1.30) {\textit{i}};
\node at (1,-1.3) {\textit{j}};
\node at (2,-1.3) {\textit{k}};
\draw[thick] (0,-1) --( 1,-1) -- (2, -1);
\draw[thick, ->] (1,-1) -- (1.6, -1);
\draw[thick, -Latex] (0,-1) -- (0.7, -1);

\draw[fill=black] (3,-1) circle (2pt);
\draw[fill=black] (4,-1) circle (2pt);
\draw[fill=black] (5,-1) circle (2pt);
\node at (3,-1.30) {\textit{i}};
\node at (4,-1.3) {\textit{j}};
\node at (5,-1.3) {\textit{k}};
\draw[thick] (3,-1) --( 4,-1) -- (5, -1);
\draw[thick, -Latex] (5.0,-1) -- (4.4, -1);
\draw[thick, -<] (3,-1) -- (3.6, -1);

\draw[fill=black] (6,-1) circle (2pt);
\draw[fill=black] (7,-1) circle (2pt);
\draw[fill=black] (8,-1) circle (2pt);
\node at (6,-1.30) {\textit{i}};
\node at (7,-1.3) {\textit{j}};
\node at (8,-1.3) {\textit{k}};
\draw[thick] (6,-1) --( 7,-1) -- (8, -1);
\draw[thick, ->] (6.0,-1) -- (6.6, -1);
\draw[thick, -Latex] (7,-1) -- (7.7, -1);

\draw[fill=black] (9,-1) circle (2pt);
\draw[fill=black] (10,-1) circle (2pt);
\draw[fill=black] (11,-1) circle (2pt);
\node at (9,-1.30) {\textit{i}};
\node at (10,-1.3) {\textit{j}};
\node at (11,-1.3) {\textit{k}};
\draw[thick] (9,-1) --( 10,-1) -- (11, -1);
\draw[thick, -Latex] (10,-1) -- (9.3, -1);
\draw[thick, -<] (10,-1) -- (10.7, -1);

\draw[fill=black] (2,-2) circle (2pt);
\draw[fill=black] (3,-2) circle (2pt);
\node at (2,-2.3) {\textit{i}};
\node at (3,-2.3) {\textit{j}};
\draw[thick] (2,-2) -- (3,-2);
\draw[thick, -<] (2, -2) -- (2.6, -2);
\draw[thick, Latex-](2.6,-2)--(2,-2);

\draw[fill=black] (4,-2) circle (2pt);
\draw[fill=black] (5,-2) circle (2pt);
\node at (4,-2.3) {\textit{i}};
\node at (5,-2.3) {\textit{j}};
\draw[thick] (4,-2) -- (5,-2);
\draw[thick, ->] (4, -2) -- (4.5, -2);
\draw[thick, -Latex] (5,-2)--(4.4,-2);

\draw[fill=black] (6,-2) circle (2pt);
\draw[fill=black] (7,-2) circle (2pt);
\node at (6,-2.3) {\textit{j}};
\node at (7,-2.3) {\textit{k}};
\draw[thick] (6,-2) -- (7,-2);
\draw[thick, -<] (6, -2) -- (6.6, -2);
\draw[thick, Latex-](6.6,-2)--(6,-2);

\draw[fill=black] (8,-2) circle (2pt);
\draw[fill=black] (9,-2) circle (2pt);
\node at (8,-2.3) {\textit{j}};
\node at (9,-2.3) {\textit{k}};
\draw[thick] (8,-2) -- (9,-2);
\draw[thick, ->] (8, -2) -- (8.5, -2);
\draw[thick, -Latex] (9,-2)--(8.4,-2);

\end{tikzpicture}
\caption{\textit{Top Row:} Two incident edges in $G$ and their four directed counterparts \textit{(l-r):} $ij,\,ji,,jk,\,kj$, which are vertices in $tG.$ The four vertices combine to make four edges in $tG$, two central: $\{ij, ji\}, \{jk,kj\}$ and two transverse: $\{ij, jk\}, \{kj, ji\}.$ These four edges in $tG$ combine to make eight vertices $t^2G$.  For vertices $\alpha\in t^2G,$ a solid arrowhead indicates the base point $\pi(\alpha)$ and an open arrowhead indicates the end point $\pi_+(\alpha)$. \textit{Middle Row (l-r):} There are two forward translations: $ij/jk,\, kj/ji$ and two backward translations: $jk/ij,\, ji/kj.$ \textit{Bottom row (l-r):} There are four reflections: $ij/ji,\, ji/ij,\, jk/kj,\, kj/jk.$}
\end{figure}

Writing vertices in a tangent graph as concatenations of vertices in an edge of the given graph is useful notation and it can be iterated, as we've just seen. The projections $\pi$ and $\pi_+$ can be iterated as well, for example,
$$
\pi^2(\alpha)=\pi(\pi(\alpha))=\pi(u)=i, \,\,\text{etc.}
$$

\mpar
Observe that a function $X\in C(tG)$ determines a first order differential operator by the rule,
$$
X\phi=\sum_{u\in V_tG}X(u)d\phi(u)e_{\pi(u)}.
$$
So, if $X=e_{ij}$ then,
$$
X\phi = d\phi(ij)e_i = (\phi(j)-\phi(i))e_i.
$$
In addition, if $\phi=e_k$ then,
\begin{align*}
X\phi & = e_{ij}\, e_k\\
& =(e_k(j)-e_k(i))e_i\\
& = \begin{cases}
-e_i, &\text{$k=i,$}\\
\phantom{-} e_i, &\text{$k=j,$}\\
\phantom{-} 0, &\text{$k\neq i,j.$}
\end{cases}
\end{align*}

\mpar
Written differently we have,
$$
e_u e_i = d e_i(u) e_{\pi(u)} =(e_i(\pi_+(u))-e_i(\pi(u))) e_{\pi(u)}
$$
for all $u\in V_{tG}$ and $i\in V_G.$

\mpar
Similarly, a function $Z\in C(t^2G)$ determines a second order differential operator by the rule,
$$
a(Z)\phi  =\sum_{\alpha\in V_{t^2G}}Z(\alpha)d^2\phi(\alpha)e_{\pi^2(\alpha)}.
$$
We could have written this simply as $Z\phi,$ as we did with $X\phi,$ but that would be ambiguous since we reserve the adjacency notation $Zf$ to mean the action of the vector field $Z$ on the function $f\in C(tG).$ Note that $C(tG)\cong\mathcal{X}(G)$ so $Z$ acts on vector fields while $a(Z)$ acts on functions. This notation reflects the distinction between the isomorphic vector spaces $\mathcal{X}(tG)$ and $\mathcal{X}(T^2G).$

\begin{example}
Let $G$ be $P_2$, the path of length two with vertices $\{1,2,3\}$. It's a pleasant exercise to check that  $P_2^2$ is a triangle, $tP_2$ is a rectangle, $tP_2^2$ is a triangular wedge, and $t^2P_2$ is a cube. (See Figure 2). The goal is to show that second order differential operators on $P_2$ can be described in multiple ways: as vector fields on $P_2^2$, as products of vector fields on $P_2,$ as second order vector fields on $P_2$, and as so-called \textit{$2$-charge fields}. We also show that the commutator of vector fields on $P_2$ is generally a second order differential operator, not a vector field.

\begin{figure} 
\begin{tikzpicture}
\draw[fill=black] (0,0) circle (2pt);
\draw[fill=black] (1.5,0) circle (2pt);
\draw[fill=black] (0.75,1.5) circle (2pt);

\node at (0,-0.30) {1};
\node at (1.5,-0.3) {2};
\node at (0.75,1.8) {3};

\draw[thick] (0,0) -- (1.5,0) -- (0.75,1.5) -- (0,0);

\draw[fill=black] (2.5,0) circle (2pt);
\draw[fill=black] (4,0) circle (2pt);
\draw[fill=black] (5.5,0) circle (2pt);

\node at (2.5,-0.30) {1};
\node at (4,-0.3) {2};
\node at (5.5,-0.3) {3};

\draw[thick] (2.5,0) --( 4,0) -- (5.5, 0);

\draw[fill=black] (6.5,0) circle (2pt);
\draw[fill=black] (6.5,1.5) circle (2pt);
\draw[fill=black] (8,0) circle (2pt);
\draw[fill=black] (8,1.5) circle (2pt);

\node at (6.5,-0.3) {12};
\node at (6.5,1.8) {21};
\node at (8,-0.3) {23};
\node at (8,1.8) {32};

\draw[thick] (6.5,0) -- (8,0) -- (8,1.5) -- (6.5, 1.5) -- (6.5,0);

\draw[fill=black] (0.5,-4) circle (2pt);
\draw[fill=black] (3,-4) circle (2pt);
\draw[fill=black] (2,-3.5) circle (2pt);
\draw[fill=black] (0.5,-2.5) circle (2pt);
\draw[fill=black] (3,-2.5) circle (2pt);
\draw[fill=black] (2,-2) circle (2pt);

\node at (2, -1.7) {13};
\node at (2,-3.8) {31};
\node at (0.5,-2.2) {21};
\node at (0.5, -4.3) {12};
\node at (3,-2.2) {32};
\node at (3,-4.3) {23};

\draw[thick] (0.5,-4) -- (3,-4) -- (3, -2.5) -- (0.5,-2.5) -- (0.5, -4);
\draw[thick] (0.5, -4) -- (2,-3.5) -- (3, -4);
\draw[thick] (0.5, -2.5) -- (2, -2) -- (3, -2.5);
\draw[thick] (2,-3.5) -- (2, -2);

\draw[fill=black] (4.5,-4) circle (2pt);
\draw[fill=black] (6.5,-4) circle (2pt);
\draw[fill=black] (4.5,-2.5) circle (2pt);
\draw[fill=black] (6.5,-2.5) circle (2pt);
\draw[fill=black] (5.5,-3.5) circle (2pt);
\draw[fill=black] (7.5,-3.5) circle (2pt);
\draw[fill=black] (5.5,-2) circle (2pt);
\draw[fill=black] (7.5,-2) circle (2pt);

\node at (4.5,-4.3) {\textit{a}};
\node at (6.5,-4.3) {\textit{b}};
\node at (4.5,-2.2) {\textit{e}};
\node at (6.5,-2.25) {\textit{f}};
\node at (5.5,-3.8) {\textit{d}};
\node at (7.5, -3.8) {\textit{c}};
\node at (5.5, -1.7) {\textit{h}};
\node at (7.5, -1.7) {\textit{g}};

\draw[thick] (4.5,-4) -- (6.5,-4) -- (6.5, -2.5) -- (4.5, -2.5) -- (4.5,-4);
\draw[thick] (5.5,-3.5) -- (7.5,-3.5) -- (7.5, -2) -- (5.5, -2) -- (5.5,-3.5);
\draw[thick] (4.5, -4) -- (5.5,-3.5);
\draw[thick] (4.5, -2.5) -- (5.5,-2);
\draw[thick] (6.5, -4) -- (7.5,-3.5);
\draw[thick] (6.5, -2.5) -- (7.5 ,-2);
\end{tikzpicture}
\caption{\textit{Top row:} The graph $P_2$ \textit{(center)} flanked by its second power graph, $P_2^2$ \textit{(left)} and its tangent graph, $tP_2$ \textit{(right).} \textit{Bottom row:} $tP_2^2,$ the tangent graph of the second power graph \textit{(left)}, and  $t^2P_2=t(tP_2),$ the second tangent graph of $P_2$ \textit{(right).} The vertices of $t^2P_2$ are labelled $a=12/23,$ $b=23/32,$ $c=32/21$, $d=21/12,$ $e=23/12,$ $f=32/23,$ $g=21/32,$ $h=12/21.$}
\end{figure}

\mpar
First, note that $|DOp^1(P_2)| = 4,$ because $|\mathcal{X}(G)|=|V_{tG}|= 2|E_G|$ for any graph $G.$ Let's check that $|DOp^2(P_2)|=6.$ To see it, note that $DOp^1(G)\subset DOp^2(G)$ for any graph which accounts for 4 dimensions when $G=P_2.$ Again, for any graph it's clear that the operators,
$$
L^{ij}\phi=(\phi(j)-\phi(i))e_i
$$ 
are in $DOp^2(G)$ if and only if $d(i,j)=1,2,$  and they form a basis of $DOp^2(G).$ Thus $L^{13}, L^{31}$ are linearly independent operators in $DOp^2(P_2)\negthinspace\setminus\negthinspace DOp^1(P_2),$ and it follows that $|DOp^2(P_2)|=6.$ 

\mpar
Observe that $P_2^2$ has 3 edges and therefore $|DOp^1(P_2^2)|=6,$ meaning it has the same dimension as $DOp^2(P_2).$ It is not a coincidence that $DOp^2(P_2)\cong DOp^1(P_2^2)$. They have the same dimension because the new edge $\{1,3\}$ in $P_2^2$ accounts for two vertices $13$ and $31$ in $tP_2^2$ that correspond to vector fields $e_{13}, e_{31}\in \mathcal{X}(P_2^2)$ and to operators $L^{13}, L^{31}\in DOp^2(P_2).$ We'll see below that these spaces are isomorphic for every graph $G.$

\mpar
Second, let's show products of vector fields $(XY)\phi=X(Y\phi)$ are second order differential operators that span $DOp^2(P_2).$ It's easy to verify but it's worthwhile writing out the details. It suffices to check that $L^{12}$ and $L^{13}$ can be written as linear combinations of products of vector fields since the other cases are similar. Observe that,
$$
e_{12}(e_{12}\phi) = e_{12}\left(d\phi(12)e_1\right) = d\phi(12)e_{12}e_1 = -(\phi(2)-\phi(1))e_1 =-L^{12} e_1,
$$
and,
$$
e_{12}(e_{23})\phi = e_{12}\left(d\phi(23)e_2\right)= d\phi(23) e_{12}e_2 = (\phi(3)-\phi(2))e_1.
$$

Thus, 
$$
L^{12}\phi =-e_{12}(e_{12}\phi) \quad\text{and}\quad L^{13}\phi = e_{12}(e_{23}\phi) -e_{12}(e_{12}\phi).
$$
Note that,
$$
|\mathcal{X}(P_2)\times \mathcal{X}(P_2)| = 4\times 4 = 16,
$$
 but $DOp^2(G)$ has dimension six, so there are ten independent relations among the products $XY.$ Roughly speaking, these relations are accounted for by zero divisors $XY=0,$ such as $X=e_{23}$ and $Y=e_{12},$ and by stabilizers $XY=Y,$ such as $X=-e_{21}$ and $Y=e_{23}.$

\mpar
Third, it's reasonable to expect that functions on $t^2G$ should correspond to second order operators in the same sense that functions on $tG$ correspond to first order operators. For example consider the vertex $\alpha= 12/23$ of $t^2P_2$ and its associated operator $a(e_\alpha)$ namely,
$$
a(e_\alpha)\phi=d^2\phi(12/23)e_1 =[\phi(3)-2\phi(2) +\phi(1)]e_1.
$$
Since $t^2P_2$ has eight vertices there are eight such operators inside $DOp^2(P_2),$ which is a space of dimension six. It's not hard to see that they span $DOp^2(P_2)$ so there are two linearly independent relations among them. Let's check that they do span $DOp^2(P_2)$ and find the two relations, just for practice. It suffices to show that $L^{32}$ and $L^{31}$ can be written as linear combinations of operators $a(e_\alpha)$ for $\alpha\in V_{t^2P_2}$ because the other cases are similar. If $\alpha=32/23$ then 
$$
a(e_\alpha)\phi=-2(\phi(2)-\phi(3))e_3 = -2 L^{32}\phi.
$$
On the other hand, if $\beta=32/21$ then,
\begin{align*}
a(e_\beta)\phi & = (\phi(1)-2\phi(2)+\phi(3))e_3\\
& = ([\phi(1)-\phi(3)] -2[\phi(2)-\phi(3)]) e_3\\
 & =L^{31}\phi - 2L^{32}\phi\\
 & = L^{31}\phi - a(e_\alpha) \phi,
\end{align*}
hence $L^{31}=a(e_\alpha+e_\beta).$ 

\mpar
So, there is a natural surjection $a\colon\mathcal{X}(T^2P_2)\to DOp^2(P_2)$ from the space of second order vector fields to second order operators. It's not an isomorphism, as it is for first order differential operators and vector fields, so let's examine the kernel.
\mpar
To find a basis of $\Ker(a),$ note that,
$$
a(e_{21/32})\phi  = (2\phi(2)-\phi(1)-\phi(3))e_2  = a(e_{23/12})\phi,
$$
and it follows that  $Z_1=e_{21/32}-e_{23/12}\in\Ker(a).$ Next, observe that,
$$
a(e_{21/12})\phi = 2(\phi(2)-\phi(1))e_2\quad\text{and}\quad a(e_{23/32})\phi = 2(\phi(2)-\phi(3))e_2.
$$
Thus,
$$
a(e_{21/32})\phi - a(e_{23/32})\phi = (\phi(3)-\phi(1))e_2= a(e_{21/12})\phi - a(e_{23/12})\phi
$$
and therefore, $Z_2=  e_{21/32} + e_{23/12} -e_{21/12} - e_{23/32}\in\Ker(a),$ as well. Clearly, $Z_1$ and $Z_2$ are linearly independent so they form a basis of $\Ker(a).$

\mpar
At this point we have three descriptions of $DOp^2(P_2)$:  as vector fields on $P_2^2$, as products of vector fields on $P_2$, and as cosets $\mathcal{X}(T^2P_2)/\Ker(a).$ There is a fourth way of seeing $DOp^2(P_2)$, namely as a field of $2$-charges $L(1, \cdot), L(2, \cdot),$ and  $L(3, \cdot),$ where a $2$-charge based at vertex $i$ is a function $\zeta_i$ such that, $\zeta_i(j)=0$ if $d(i,j)>2$ and,
$$
\sum_{d(i,j)\leq 2}\zeta_i(j)=0.
$$
This description is important but it's basically a tautology: the fact that $L$ annihilates constants is equivalent to the condition that the row sums of $L$ vanish which is equivalent to the condition that the row vectors form a section of the 2-charge bundle. We'll see this four-way description of second order operators holds for any graph.  

\mpar
Next, let's exhibit a pair of vector fields whose commutator is not a vector field. Let $X=e_{12}$ and $Y= e_{23}.$ Then, $ X\phi=d\phi(12)e_1$ and $Y\phi=d\phi(23)e_2.$ One calculates $X(Y\phi)(1)=\phi(3)-\phi(2)$ and $Y(X\phi)(1)=0$ hence, 
$$
[X,Y]\phi(1) =\phi(3)-\phi(2).
$$
Evidently, $[X,Y]\phi(1)$ depends on the values of $\phi$ on the ball of radius two about vertex $1,$ so $[X ,Y]$ is not a first order operator hence not a vector field.

\mpar
To conclude this example, let's look at the relationship between the second power graph $P_2^2$ and the tangent graph $tP_2$ as well as the relationship between their tangent graphs $tP_2^2$ and $t^2P_2.$ The motivation for doing so is the hope that the surjection $a\colon \mathcal{X}(T^2P_2)\to DOp^2(P_2)$ is functorial in the sense that it derives from an underlying surjective graph homomorphism from $t^2P_2$ to $tP_2^2.$  This hope is in vain, however.

\mpar
It's easy to see that there are no graph homomorphisms from the triangle $P_2^2$ to the square $tP_2$ and the only morphisms from the square to the triangle are not surjective. This makes it seem unlikely that there is a surjective morphism from the cube $t^2P_2$ to the triangular wedge $tP_2^2$ but it's not so easy to prove. If there were such a morphism then $t^2P_2$ would have six independent sets corresponding to the six vertices of $tP^2_2$ and edges between at least one pair of vertices in distinct independent sets if their corresponding vertices were adjacent in $tP^2_2.$ Out of an abundance of curiosity we've enumerated the set of twelve graphs that can be formed in this way from six independent sets in $t^2P_2$ and observe the triangular wedge is not among them. The details are provided in Appendix B.
\end{example}

\begin{remark}
\textit{
1. In a complete graph every vertex is adjacent to every other vertex and therefore every differential operator is first order, including $[X,Y].$ On the other hand, removing an edge from a complete graph creates a $P_2$ subgraph whose endpoints, by definition, are not adjacent. By the calculations above, there exists a pair of vector fields on the residual graph whose commutator is not a vector field. Therefore, the property that every commutator of vector fields is a vector field characterizes complete graphs.}

\mpar
\textit{
2. This characterization suggests that the second order part of the commutator is a bilinear form on vector fields whose non-vanishing detects local departures from completeness -  a sort of coarse curvature operator. To make this precise, define the subspace of pure second order differential operators by the rule,
$$
DOp^2_+(G)=\langle L\in DOp^2(G) \mid L(i,j)=0,\,\, d(i,j)=1\rangle.
$$
}
\textit{Then,
$$
DOp^2(G)=DOp^1(G)\oplus DOp_+^2(G)
$$
and we write $L_+$ for the purely second order part of $L.$ Then the form $(X,Y)\to [X,Y]_+$ vanishes when $X$ and $Y$ are supported in the same clique of $G.$}
\end{remark}

\begin{example}
In our study of Bochner's identity \cite{M1} we encountered the second order differential operator  $\Div\circ \,\Delta_{tG}\circ \nabla$ and evaluated it explicitly,
$$
\Div\Delta_{tG}\nabla\phi(i) =\Delta^2\phi(i) -4\Delta\phi(i) - 4D\phi(i),
$$
where $\Delta$ is the \textit{Laplacian} of $G,\,\Delta_{tG}$ is the Laplacian of the tangent graph $tG,$ and $D$ is the vector field,
$$
D(u)=\Deg(\pi_+(u))+\Deg(\pi(u)).
$$

\mpar
Recall that the \textit{gradient} of a function is the vector field whose value at $u\in V_{tG}$ is,
$$
\nabla\phi(u)= d\phi(u)=\phi(\pi_+(u))-\phi(\pi(u)),
$$
and the \textit{divergence} of a vector field is the function whose value at $i\in V_G$ is
$$
\Div X(i) =\sum_{\pi_+(u)=i}\negthickspace X(u) \thickspace - \sum_{\pi(u)=i}\negthickspace X(u).
$$

\mpar 
These formulas define the gradient operator $\nabla\colon C(G)\to\mathcal{X}(G)$,
$$
\nabla\phi = \sum_{u\in V_{tG}} d\phi(u)e_u,
$$
and the divergence operator $\Div\colon\mathcal{X}(G)\to C(G)$,
$$
\Div X = \sum_{i\in V_G} \Div X(i) e_i.
$$

There is a natural inner product on the tangent space, 
$$
T_i(G)= \langle e_u\mid u\in V_{tG},\,\,\pi(u)=i\rangle
$$
in which the indicator functions of vertices are orthonormal,
$$
\langle e_u, e_v\rangle_{T_i(G)} = 
\begin{cases}
1, & u=v,\\
0, & u\neq v
\end{cases}
$$
that extends to an inner product on vector fields,
$$
\langle X,Y\rangle_{\mathcal{X}(G)} =\sum_{u\in V_{tG}}X(u)Y(u).
$$ 

The natural inner product on $C(G)$ is,
$$
\langle \phi, \psi\rangle_{C(G)}=\sum_{i\in V_G}\phi(i)\psi(i)
$$
and it's known that the gradient and divergence are mutually adjoint with respect to these inner products,
$$
\langle \nabla\phi, X\rangle_{\mathcal{X}(G)} = \langle \phi, \Div X\rangle_{C(G)}.
$$

The Laplacian on $G$ is $\Delta = \Div\circ\nabla.$ It is self adjoint since,
$$
\langle\Delta\phi, \psi\rangle_{C(G)}=\langle\nabla\phi,\nabla\psi\rangle_{\mathcal{X}(G)} = \langle\phi,\Delta\psi\rangle_{C(G)}
$$
and an easy calculation shows,
$$
\Delta\phi(i)=-2\negthickspace\sum_{\pi(u)=i}d\phi(u).
$$

It seems counterintuitive at first but the Laplacian is a first order differential operator in this theory, given by the constant vector field $\Delta(u) =-2.$

\mpar
Since $tG$ is a finite simple graph it has its own gradient, divergence, and Laplacian that we denote with subscripts. Since $\mathcal{X}(G)\cong C(tG)$ we think of the Laplacian of $tG$ as acting on vector fields on $G$, that is $\Delta_{tG}\colon\mathcal{X}(G)\to\mathcal{X}(G)$ where,
$$
\Delta_{tG}\,X(u) = \Div_{tG}\nabla_{tG}\,X(u) =-2\negthickspace\sum_{\pi(\alpha)=u}dX(\alpha).
$$ 
In particular, when $X=\nabla\phi$ is a gradient we have,
\begin{align*}
\Delta_{tG}\nabla\phi(u)& = -2\negthickspace\sum_{\pi(\alpha)=u} d\,\nabla\phi(\alpha)\\
& =-2\negthickspace\sum_{\pi(\alpha)=u}[d\phi(\pi_+(\alpha))-d\phi(\pi(\alpha))]\\
& = -2\negthickspace\sum_{\pi(\alpha)=u}d^2\phi(\alpha)
\end{align*}

where,
$$
d^2\phi(\alpha)=\phi(\pi^2_+(\alpha))-\phi(\pi(\pi_+(\alpha)))-\phi(\pi_+(\pi(\alpha)))+\phi(\pi^2(\alpha)).
$$
Observe that $d^2\phi(\alpha)$ only involves terms of the form $\phi(i)\pm\phi(j)$ where $d(i,j)\leq 2.$ Since the formula for $\Div\Delta_{tG}\nabla\phi(i)$ consists of sums of such terms indexed by vertices $\alpha\in V_{t^2G}$ such that $\pi^2(\alpha)=i$ or $\pi^2_+(\alpha)=i$, it follows that $\Div\Delta_{tG}\nabla$ is a second order differential operator. Evaluating the divergence of this vector field exactly takes a bit more work but no new ideas.

\mpar
Let's conclude this example by observing that,
$$
\Div\Delta_{tG}\nabla = \Div\circ(\Div_{tG}\nabla_{tG})\circ\nabla =(\Div\Div_{tG})\circ(\nabla_{tG}\nabla)
$$
is self adjoint. Evidently the iterated gradient $\nabla_{tG}\nabla\phi(\alpha)$ is the Hessian of the function $\phi$ and the iterated divergence $\Div\Div_{tG} Z(i)$ is its adjoint. These ideas play a role in formulating the second canonical form of $DOp^2(G)$ in Section 5.

\begin{remark}
\textit{
It's not a stretch to imagine substituting a general vector field $Z$ in place of $\Delta_{tG}$, leading to differential operators of the form $b(Z)\phi=\Div\circ \,Z\circ\nabla\phi.$ In Section 7 we show that $\Image(b)$ is a proper subspace of $DOp^2(G)$, so the map $b$ is a less general representation than the map $a.$ Since $a$ is surjective, for every $Z$ there is a vector field $W$ such that $b(Z)=a(W).$ We find a formula for one such $W$ and we also find a formula for the adjoint $b(Z)^*$, both of which are expressed directly in terms of $Z.$
}
\end{remark}
\end{example}

\section{First Canonical Form via the Dipole Decomposition}
We begin this section by showing the fourfold representation of $DOp^2(P_2)$ described in Example 2.1 holds for any graph $G.$ First, we recall the definition of a pair of \textit{vector bundles.}

\begin{definition}
1. A \textit{k-local function} at vertex $i$ is a function $\phi\colon V_G\to\mathbb{R}$ such that $\phi(j)=0,$ if $d(i,j)> k.$ Let $Lf_i^k(G)$ be the vector space of $k$- local functions at vertex $i$ and let, 
$$
Lf^k(G)=\coprod_{i\in V_G} Lf^k_i(G)
$$
be the vector bundle of $k$-local functions. The space of sections of the $k$-local function bundle is denoted $\mathcal{X}(Lf^k(G)).$

\mpar
2. A \textit{k-charge} at vertex $i$ is a $k$-local function at vertex $i$ such that  $\sum_{j\in V_G}\phi(j)=0.$ Let $Ch_i^k(G)$ be the vector space of $k$-charges at vertex $i$ and let, 
$$
Ch^k(G)=\coprod_{i\in V_G} Ch^k_i(G)
$$
be the vector bundle of $k$-charges. The space of sections of the 2-charge bundle is denoted $\mathcal{X}(Ch^2(G)).$
\end{definition}

\begin{remark}
\textit{
1. Note that $Ch^k(G)$ is a sub-bundle of $Lf^k(G)$ and the space of sections of the $k$-charge bundle $\mathcal{X}(Ch^k(G))$ is a subspace of the sections of the $k$-local function bundle $\mathcal{X}(Lf^k(G)).$
}
\mpar
\textit{2. Note that $\mathcal{X}(Lf^2(G))\cong Op^2(G),$ the set of second order operators on $G$, and $\mathcal{X}(Ch^2(G))\cong DOp^2(G).$}
\end{remark}
\begin{proposition}
For any finite simple graph $G,$
$$
DOp^2(G)=\{XY|X, Y\in\mathcal{X}(G)\}\cong\mathcal{X}(G^2)\cong\mathcal{X}(Ch^2G)\cong \mathcal{X}(T^2G)/\Ker(a).
$$
\end{proposition}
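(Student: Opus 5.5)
I will prove the chain one link at a time, using the basis $\{L^{ij}\mid d(i,j)=1,2\}$ of $DOp^2(G)$ from Example 2.1 as the common reference point.

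\emph{The isomorphisms $DOp^2(G)\cong\mathcal{X}(Ch^2G)\cong\mathcal{X}(G^2)$.} These are essentially tautological. An operator $L\in DOp^2(G)$ is determined by its rows $L(i,\cdot)$. Condition (1) says each row is a $2$-local function at $i$, and condition (2) says its entries sum to zero. So $L\mapsto (L(i,\cdot))_{i\in V_G}$ identifies $DOp^2(G)$ with $\mathcal{X}(Ch^2G)$, as in Remark 3.2.2.

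Next, the edges of $G^2$ are exactly the pairs $\{i,j\}$ with $d(i,j)\in\{1,2\}$, so the vertices of $tG^2$ are the ordered pairs $ij$ with $d(i,j)\le 2$. Applying the first-order correspondence $DOp^1(H)\cong\mathcal{X}(H)$ with $H=G^2$, a vector field $X$ on $G^2$ gives the operator $\sum_{\pi(u)=i}X(u)\,d\phi(u)$. Its off-diagonal entries $L(i,j)=X(ij)$ are supported on $d(i,j)\le2$, and its diagonal entry is fixed by the zero-row-sum condition. This is exactly a second order operator on $G$, and the map is inverted by $X(ij)=L(i,j)$. Equivalently, it sends $e_{ij}$ to $L^{ij}$, which carries a basis onto a basis.

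\emph{The equality $DOp^2(G)=\{XY\}$.} I read $\{XY\}$ as the span of products of vector fields. The inclusion $\supseteq$ comes first. If $X=e_u$ and $Y=e_v$, then $X(Y\phi)(i)$ is a combination of values $\phi(k)$ with $k$ adjacent to $\pi_+(u)$, and every such $k$ is within distance $2$ of $i=\pi(u)$. Moreover $XY$ annihilates constants because $Y$ already does. Bilinearity then covers all products.

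For $\subseteq$ I generalize the computation in Example 2.1:
\[
L^{ij}=-e_{ij}e_{ij}\quad\text{if } d(i,j)=1,
\]
\[
L^{ik}=e_{ij}e_{jk}-e_{ij}e_{ij}\quad\text{if } d(i,k)=2 \text{ via a middle vertex } j.
\]
To verify the second identity, note that $e_{ij}(e_{jk}\phi)=d\phi(jk)\,e_{ij}e_j=(\phi(k)-\phi(j))e_i$, and adding $L^{ij}$ gives $(\phi(k)-\phi(i))e_i$. Since the $L^{ij}$ form a basis, this gives spanning.

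\emph{The isomorphism $\mathcal{X}(T^2G)/\Ker(a)\cong DOp^2(G)$.} This requires showing that $a$ lands in $DOp^2(G)$ and is surjective; the first isomorphism theorem then finishes. For $\alpha=ij/kl$, $d^2\phi(\alpha)$ involves only vertices within distance $2$ of $i$. It vanishes on constants because it is a difference of differences. So $a(e_\alpha)\in DOp^2(G)$.

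For surjectivity I again hit the basis, using the reflections and translations as in Example 2.1. If $d(i,j)=1$, then
\[
a(e_{ij/ji})\phi=(\phi(i)-\phi(j))-(\phi(j)-\phi(i))=-2L^{ij}\phi,
\]
so $L^{ij}=-\tfrac12 a(e_{ij/ji})$. If $d(i,k)=2$ through $j$, then
\[
a(e_{ij/jk})\phi=(\phi(k)-\phi(j))-(\phi(j)-\phi(i))=L^{ik}\phi-2L^{ij}\phi,
\]
so $L^{ik}=a(e_{ij/jk}-e_{ij/ji})$.

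The main obstacle is only bookkeeping: $d^2\phi$ must be evaluated with the correct ordering conventions for $\pi$ and $\pi_+$ in the forward and backward cases, and the identification of $\{XY\}$ with its linear span must be made explicit.
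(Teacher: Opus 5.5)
Your proposal is correct and follows essentially the paper's route: the basis $\{L^{ij}\}$, the tautological identifications with $\mathcal{X}(Ch^2G)$ and $\mathcal{X}(G^2)$, and explicit preimages of each $L^{ij}$ as products and under $a$ (your $-\tfrac12 e_{ij/ji}$ and $e_{ij/jk}-e_{ij/ji}$ are exactly the paper's). Your version differs from the printed proof in two small ways: your band-width argument that products lie in $DOp^2(G)$ replaces the paper's case split on incident edges (strictly, the vertices $k$ that appear are those equal or adjacent to $\pi(u)$ or $\pi_+(u)$, not only neighbors of $\pi_+(u)$, but all lie within distance $2$ of $i$), and your identity $L^{ik}=e_{ij}e_{jk}-e_{ij}e_{ij}$ is correct, whereas the paper's printed choice $X=-e_{ik}$, $Y=e_{kj}+e_{ik}$ (middle vertex $k$) actually gives $2L^{ik}-L^{ij}$ rather than $L^{ij}$.
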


\begin{proof}
This is an elementary result but writing out the details is instructive.
\mpar
It's easy to see that $DOp^2(G)$ has a privileged basis consisting of operators of the form $L^{ij}\phi = (\phi(j)-\phi(i))e_i,$ where $d(i,j)=1,2.$ These are in one-to-one correspondence with vectors $e_{ij}\in T_i(G^2)$ hence $DOp^2(G^2)\cong \mathcal{X}(G^2).$ 

\mpar
If $L\in DOp^2(G)$ then $\zeta_i(j) = L(i,j)$ is a $2$-charge based at $i,$ because $L1=0$ implies $\sum_{j\in V_G}L(i,j)=0.$ Thus, $L$ determines a section of $\mathcal{X}(Ch^2G).$ Conversely any section $\zeta$ of the $2$-charge bundle determines a second order differential operator by same the rule, hence $DOp^2(G)\cong\mathcal{X}(Ch^2G).$

\mpar
Let $\{i,j\}, \{j,k\}\in E_G$ be incident edges. Then $ij, jk\in V_{tG}$ and $X=e_{ij}, Y=e_{jk}\in \mathcal{X}(G).$ There are two alternatives: either$\{i,j,k\}$ is or is not a triangle.  If not then, 
\begin{align*}
XY\phi & = X(Y\phi) =e_{ij}(e_{jk}\phi)\\ 
& = e_{ij}(d\phi(jk) e_j)\\
& = d\phi(jk)(e_j(j)-e_j(i))e_i\\
& = (\phi(k)-\phi(j)) e_i.
\end{align*}

This is a second order differential operator because $d(i,k)=2.$ On the other hand, if $\{i,j,k\}$ is a triangle then.
$$
XY\phi = (\phi(k)-\phi(j)) e_i = ([\phi(k)-\phi(i)] - [\phi(j)-\phi(i)]) e_i = (e_{ik}-e_{ij})\phi,
$$
hence $XY$ is a vector field.  Finally, if $\{i,j\}, \{k, l\}$ are not incident edges and $X=e_{ij}$ and $Y=e_{kl}$ then $XY=0.$ Specifically,
$$
XY\phi =e_{ij}(e_{kl}\phi) =e_{ij}(d\phi(kl)e_k) = d\phi(kl)(e_k(j)-e_k(i)) = 0,
$$
since $k\notin\{i,j\}.$ By linearity, this shows $\{XY\mid X,Y\in\mathcal{X}(G)\}\subset DOp^2(G).$

\mpar
To show the reverse inclusion it's enough to find vector fields $X,Y$ such that $XY=L^{ij}$ for all $d(i,j)=1,2.$ If $d(i,j)=1$ then choose $X=-Y=e_{ij}$. If $d(i,j)=2$ then there is a vertex $k\in V_G$ such that $\{i,k\}, \{k,j\}\in E_G$ and we choose $X=-e_{ik}$ and $Y=e_{kj}+e_{ik}$. Then calculations similar to the ones above show $XY\phi=L^{ij}\phi.$

\mpar
Finally, let's show that $a\colon\mathcal{X}(T^2G)\to DOp^2(G)$ is surjective which would imply  $DOp^2(G)\cong\mathcal{X}(T^2G)/\Ker(a).$
The set $\{ L^{ij}\mid \{i,j\}\in E_{G^2}\}$ is a basis of $DOp^2(G)$ so it's enough to show that for every such edge $\{i,j\}$ there exists a section $Z\in\mathcal{X}(T^2G)$ such that $a(Z)=L^{ij}.$ 

\mpar
Suppose first that $d(i,j)=1.$ Then,
$$
a(e_{ij/ji})\phi =d^2\phi(ij/ji)e_i = -2(\phi(j)-\phi(i))e_i.
$$
Choosing  $Z=-\tfrac{1}{2}e_{ij/ji},$ we find $a(Z)=L^{ij}.$ 

\mpar
Now suppose $d(i,j)=2.$ Then there exists a vertex $k\in V_G$ such that $\{i,k\}, \{k,j\}\in V_G,$ hence $ik/kj\in V_{t^2G}.$ Note that,
\begin{align*}
a(e_{ik/kj})\phi & = d^2\phi(ik/kj)e_i\\
& = (\phi(j)-2\phi(k) + \phi(i))e_i \\
& =([\phi(j)-\phi(i)] -2[\phi(k)-\phi(i)])e_i.
\end{align*}
Choosing $Z=e_{ik/kj}-e_{ik/ki}$ we find $a(Z)\phi = L^{ij}\phi,$ hence $a$ is surjective.
\end{proof}

\begin{remark}
\textit{
A moment's thought reveals that $\Ker(a)=0$ if and only if $G$ consists of isolated edges. Put differently, $\Ker(a)$ is nonempty if and only if it contains a pair of incident edges.
}
\end{remark}

Our strategy is to interpret the first three ways of describing $DOp^2(G)$ laid out in Proposition 3.3 in terms of the fourth, meaning in terms of $\mathcal{X}(T^2G)$ and the basic map $a.$ 

\mpar
Let's start with $DOp^2(G)\cong\mathcal{X}(G^2).$ We examine the generic structure of balls of radius two in $G$ and their decomposition in terms of certain subgraphs called dipoles. Using dipoles, we define a map $c\colon DOp^2(G)\to \mathcal{X}(T^2G)$ with the property that $a\circ c$ is the identity on $DOp^2(G).$ We think of this as a canonical form because it expresses second order differential operators geometrically in terms of second order vector fields. 

\mpar
It's natural to think of balls of radius two as a union of balls of radius one. But a different decomposition involving dipoles is more useful for our purposes. Before introducing dipoles, let's formally define open and closed balls and spheres. While this is elementary, it's helpful to be precise. 

\begin{definition}
1. The path metric on $G$ where the distance between $i,j\in V_G$ is the length of the shortest walk beginning at $i$ and ending at $j$. More formally,
$$ 
d(i,i)=0,\,\,d(i,j) = \text{min}\,\{k\geq 1\mid A^k(i,j)>0\},\, i\neq j,
$$
where $A$ is the adjacency matrix of $G.$
 
\mpar
2. The \textit{closed ball} centered at $i\in V_G$ of radius $n\geq 0$ is the graph $\overline{B}(i,n)$ with vertex set,
 $$
 V_{\overline{B}(i,n)} = \{j\in V_G\mid d(i,j)\leq n\}
 $$
 and edge set.
 $$
 E_{\overline{B}(i,n)} = \{\,\{j,k\}\in E_G\mid j,k\in V_{\overline{B}(i,n)}\}.
 $$
 
 \mpar
3. The \textit{open ball} centered at $i\in V_G$ of radius $n\geq 1$ is the graph $B(i,n)$ with vertex set,
 $$
 V_{B(i,n)} = V_{\overline{B}(i,n)}
 $$
 and edge set,
 $$
 E_{B(i,n)} = E_{\overline{B}(i,n)}\setminus\{\,\{j,k\}\in E_G\mid d(i,j) = d(i,k) = n\}.
 $$
 
\mpar
4. The \textit{sphere} centered at $i\in V_G$ of radius $n\geq 1$ is the graph $S(i,n)$ with vertex set,
 $$
 V_{S(i,n)} = \{ j\in V_G\mid d(i,j) = n\}
 $$
 and edge set,
 $$
 E_{S(i,n)} = \{\,\{j,k\}\in E_G \mid j,k\in V_{S(i,n)}\}.
  $$
 
 \mpar
 5. The \textit{relative complement} of the closed ball of radius 1 is the graph $\overline{B}^c(i,1)$ of $B(i,2)$ whose vertex set is 
 $$
 V_{\overline{B}^c(i,1)} = V_{B(i,2)}\setminus \{i\},
 $$ 
and whose edge set is 
$$
E_{\overline{B}^c(i,1)}=E_{B(i,2)}\setminus E_{\overline{B}(i,1)}.
$$
\end{definition}

\begin{remark}
\textit{1. Note that $\overline{B}(i,n)$ and $S(i,n)$ are the induced subgraphs of their vertices, $E_{B(i,n)}\cap E_{S(i,n)}=\emptyset$, and $\overline{B}(i,n) = B(i,n)\cup S(i,n).$}

\mpar
\textit{2.  The relative complement of the closed ball of radius $1$ is a forest of trees.}

\mpar
\textit{3. Unit spheres are arbitrarily complex since every finite graph is a unit sphere in some finite simple graph. To see it, let $H$ be a graph and consider the graph $JH,$ where,
$V_{JH} = V_H\cup\{ *\}$ and $E_{JH} = E_H\cup\{\,\{*,i\}\mid i\in V_H\}.$ By construction,  $JH = \overline{B}(*,1)$ and $H=S(*,1).$ $JH$ is called the \textit{join} of $H$ over $*.$}
\end{remark}
To help fix ideas, Figure 3 gives a complete picture of a small but generic ball of radius two.
\begin{figure} [h]
\begin{tikzpicture}
\draw[fill=black] (0,0) circle (2pt);
\draw[fill=black] (1,0) circle (2pt);
\draw[fill=black] (1.5,0) circle (2pt);
\draw[fill=black] (2.5,0) circle (2pt);
\draw[fill=black] (0.6,1) circle (2pt);
\draw[thick] (0,0) -- (1,0) -- (0.6,1) -- (0,0);
\draw[thick] (1.5,0) -- (2.5,0);

\draw[fill=black] (0.6,-1) circle (3pt);
\draw[thick] (0.6, -1) -- (0,0);
\draw[thick] (0.6, -1) -- (1,0);
\draw[thick] (0.6, -1) -- (1.5,0);
\draw[thick] (0.6, -1) -- (2.5,0);
\draw[thick] (0.6, -1) -- (0.6,1);

\draw[fill=black] (0,2) circle (2pt);
\draw[fill=black] (1.25, 2) circle (2pt);
\draw[fill=black] (2.5,2) circle (2pt);

\draw[thick] (1.25, 2) -- (1.5, 0);
\draw[thick] (1.25, 2) -- (1, 0);
\draw[thick] (1.25, 2) -- (0.6, 1);

\draw[thick] (0, 2) -- (0,0);
\draw[thick] (0, 2) -- (2.5,0);
\draw[thick] (2.5, 2) -- (1.5,0);
\draw[thick] (2.5, 2) -- (2.5,0);

\draw[fill=black] (3.6,-1) circle (3pt);
\draw[fill=black] (3,0) circle (2pt);
\draw[fill=black] (4,0) circle (2pt);
\draw[fill=black] (4.5,0) circle (2pt);
\draw[fill=black] (5.5,0) circle (2pt);
\draw[fill=black] (3.6,1) circle (2pt);
\draw[thin] (3.6, -1) -- (3,0);
\draw[thin] (3.6, -1) -- (4,0);
\draw[thin] (3.6, -1) -- (4.5,0);
\draw[thin] (3.6, -1) -- (4.5,0);
\draw[thin] (3.6, -1) -- (3.6,1);
\draw[thin] (3.6, -1) -- (5.5,0);

\draw[fill=black] (6,0) circle (2pt);
\draw[fill=black] (7,0) circle (2pt);
\draw[fill=black] (6.6,1) circle (2pt);
\draw[fill=black] (7.5,0) circle (2pt);
\draw[fill=black] (8.5,0) circle (2pt);

\draw[thick] (6,0) -- (7,0) -- (6.6,1) -- (6,0);
\draw[thick] (7.5,0) -- (8.5,0);

\draw[fill=black] (9,0) circle (2pt);
\draw[fill=black] (10,0) circle (2pt);
\draw[fill=black] (10.5,0) circle (2pt);
\draw[fill=black] (11.5,0) circle (2pt);
\draw[fill=black] (9.6,1) circle (2pt);

\draw[fill=black] (9,2) circle (2pt);
\draw[fill=black] (10.25, 2) circle (2pt);
\draw[fill=black] (11.5,2) circle (2pt);
\draw[thin] (10.25, 2) -- (10.5, 0);
\draw[thin] (10.25, 2) -- (10, 0);
\draw[thin] (10.25, 2) -- (9.6, 1);

\draw[thin] (9, 2) -- (9,0);
\draw[thin] (9, 2) -- (11.5,0);
\draw[thin] (11.5, 2) -- (10.5,0);
\draw[thin] (11.5, 2) -- (11.5,0);

\draw[fill=black] (0,-3.5) circle (2pt);
\draw[fill=black] (1,-3.5) circle (2pt);
\draw[fill=black] (0.6,-2.5) circle (2pt);
\draw[fill=black] (0,-1.5) circle (2pt);
\draw[fill=black] (0.6,-4.5) circle (3pt);
\draw[thick] (0,-3.5) -- (0, -1.5);
\draw[thick] (0,-3.5) -- (0.6,-4.5);
\draw[thick] (0,-3.5) -- (0.6,-2.5);
\draw[thick] (0,-3.5) -- (1,-3.5);

\draw[fill=black] (2,-3.5) circle (2pt);
\draw[fill=black] (3,-3.5) circle (2pt);
\draw[fill=black] (2.6,-2.5) circle (2pt);
\draw[fill=black] (3.25, -1.5) circle (2pt);
\draw[fill=black] (2.6,-4.5) circle (3pt);
\draw[thick] (2.6, -2.5) -- (2, -3.5);
\draw[thick] (2.6, -2.5) -- (2.6, -4.5);
\draw[thick] (2.6, -2.5) -- (3, -3.5);
\draw[thick] (2.6, -2.5) -- (3.25, -1.5);

\draw[fill=black] (4,-3.5) circle (2pt);
\draw[fill=black] (5,-3.5) circle (2pt);
\draw[fill=black] (4.6,-2.5) circle (2pt);
\draw[fill=black] (5.3, -1.5) circle (2pt);
\draw[fill=black] (4.6,-4.5) circle (3pt);
\draw[thick] (5, -3.5) -- (4.6, -4.5);
\draw[thick] (5, -3.5) -- (4, -3.5);
\draw[thick] (5, -3.5) -- (4.6, -2.5);
\draw[thick] (5, -3.5) -- (5.3, -1.5);

\draw[fill=black] (7.25,-3.5) circle (2pt);
\draw[fill=black] (8.25,-3.5) circle (2pt);
\draw[fill=black] (7.05, -1.5) circle (2pt);
\draw[fill=black] (8.25,-1.5) circle (2pt);
\draw[fill=black] (6.05,-4.5) circle (3pt);
\draw[thick] (7.2, -3.5) -- (6.05, -4.5);
\draw[thick] (7.25, -3.5) -- (8.255, -3.5);
\draw[thick] (7.25, -3.5) -- (8.25, -1.5);
\draw[thick] (7.25, -3.5) -- (7.05, -1.5);
\draw[fill=black] (10.5,-3.5) circle (2pt);
\draw[fill=black] (9,-1.5) circle (2pt);
\draw[fill=black] (11.5,-1.5) circle (2pt);
\draw[fill=black] (9.55,-4.5) circle (3pt);
\draw[thick] (11.5, -3.5) -- (9.55, -4.5);
\draw[thick] (11.5, -3.5) -- (9, -1.5);
\draw[thick] (11.5, -3.5) -- (11.5, -1.5);
\draw[thick] (11.5, -3.5) -- (10.5, -3.5);

\end{tikzpicture}
\caption{\textit{Top row (l-r)}: A ball of radius two centered at the thick vertex, the ball of radius 1 centered at the thick vertex, the sphere of radius 1, and the relative complement of the closed ball of radius 1. Note that the unit sphere is not connected as it is the disjoint union of a triangle and an edge The ball has 9 vertices, 16 edges, 7 triangles, 5 rectangles and has cyclomatic number of 9.  \textit{Bottom row}: The balls of radius 1 centered at nearest neighbors of the thick vertex. }
\end{figure}

\begin{remark}
\textit{It may be helpful to glance at Figure 3 when considering the following general observations. It's elementary that $B(i,1)\cong K_{1,d},$ where $d=\deg(i),$ and,
$$
B(i,2)=\bigcup_{\pi(u)=i}B(\pi_+(u),1).
$$
Hence every ball of radius two is the union of a set of star graphs. Evidently, $B(i,1)$ plays a privileged role since it shares exactly one edge with each of the other stars and this is why we don't need to include it in the union above. 
If there are no other relations among these unit balls, meaning the stars centered at nearest neighbors of $i$ have no other vertices or edges in common, then $B(i,2)$ is a tree. If, say, $B(j,1)$ and $B(k,1)$  have an edge in common then $\{i, j, k\}$ is a triangle, and $\{j,k\}$ is an edge of $S(i,1).$ If they have a vertex in common, say $l$, then $\{i,j,k,l\}$ are the vertices of a rectangle. Note that if a triangle based at $i$ has two edges in common with a rectangle based at $i$ then there is a second triangle in $B(i,2)$ complementary to the first one. The two triangles have an edge in common which lies in the unit sphere. Note that $B(i,2)$ can't contain an induced $k$-cycle for $k\geq 5$ since it would then have a vertex at a distance greater than two from $i$. Thus, the topology of $B(i,2)$ is completely determined by the intersections between the unit balls of the nearest neighbors of vertex $i.$}
\end{remark} 

\mpar
We can think of a generic ball $B$ of radius two in a complementary way by focusing on its unit sphere which is a finite simple graph, say $S$. Let vertex $i$ be the center of $B,$ let $j_1, j_2,\cdots, j_n$ be the vertices of $B$ at distance two from $i,$ and let $V_l\subset V_S$ be the nearest neighbors of $j_l$ in $B.$ Finally, let $J_0$ be the join of $S$ over $i$ and let $J_l$ be the join of $V_l$ over $j_l,$ for $1\leq l\leq n.$ Then, 
$$
B=H\cup J_0\cup J_1\cdots\cup J_n
$$

This way of looking at $B(i,2)$ motivates the definition of a dipole. If $d(i,j)=2$ then there is a set $\{k_1, k_2,\cdots  k_m\}\subset V_{S(i,1)}$ such that $\{i, k_l\}, \{k_l ,j\}\in E_G,\, 1\leq l\leq m.$ The union of these edges is the dipole of $i$ and $k,$ denoted $Dp(i,j).$ (See Figure 4).

\begin{definition}
1. The \textit{dipole} of order $n\geq 1$ is the unique graph $Dp_n$ having $n+2$ vertices, $2n$ edges, two vertices of degree $n,$ and $n$ vertices of degree $2$. The vertices of degree $n$ are called \textit{poles}.

\mpar
2. If $Dp$ is a dipole subgraph of $G$ not contained in a dipole having the same poles but more edges, then $Dp$ is said to be a \textit{proper dipole} of $G.$ 
\end{definition}

We  state the following dipole decomposition without proof as it follows directly from the definitions. 

\begin{proposition}
Let $B(i,2)$ be a ball of radius two in some graph $G$. Let $j_1, \cdots, j_n$ be an enumeration of all vertices $j\in V_{B(i,2)}$ such that $d(i,j)=2$ and let $Dp(i, j_1), \cdots, Dp(i, j_n)$ be the the associated proper dipoles. Then $S(i,1)$ is edge-disjoint from the  dipoles and,
$$
B(i,2) = S(i,1)\cup Dp(i, j_1)\cup\cdots\cup Dp(i,j_n).
$$
\end{proposition}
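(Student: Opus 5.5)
The plan is to prove the decomposition by verifying equality of vertex sets and edge sets separately, and then checking the edge-disjointness claim. Throughout we use the open ball $B(i,2)$ of Definition 3.5, whose edges are those of $G$ with both endpoints within distance two of $i$, except edges joining two vertices at distance exactly two. We read the union $S(i,1)\cup Dp(i,j_1)\cup\cdots\cup Dp(i,j_n)$ as including the star $B(i,1)$. Each proper dipole $Dp(i,j_l)$ already contains the spokes $\{i,k\}$ for those $k$ adjacent to $j_l$. Neighbors of $i$ adjacent to no vertex of $S(i,2)$ must be accounted for by the star edges; the statement implicitly includes $\overline{B}(i,1)$ via the join $J_0$ of the preceding remark. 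I will state this reading explicitly at the outset.

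First I classify the edges of $B(i,2)$ by the distances of their endpoints from $i$. An edge $\{j,k\}$ of $G$ has $|d(i,j)-d(i,k)|\le 1$, so the possible distance pairs are $(0,1)$, $(1,1)$, $(1,2)$ and $(2,2)$. The pair $(2,2)$ is excluded from the open ball by definition. Edges of type $(0,1)$ are the spokes of $B(i,1)$. Edges of type $(1,1)$ are exactly the edges of $S(i,1)$, since $S(i,1)$ is induced. For an edge $\{k,j\}$ of type $(1,2)$, with $d(i,k)=1$ and $d(i,j)=2$, we have $j=j_l$ for some $l$. Then $i,k,j$ is a $P_2$ with poles $i,j_l$, so both $\{i,k\}$ and $\{k,j_l\}$ lie in the proper dipole $Dp(i,j_l)$. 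Maximality (properness) is what guarantees that every such middle vertex $k$ is included.

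Next I check the reverse containment, which is immediate. Dipole edges join $i$ to a neighbor or a neighbor to some $j_l$, so they are edges of $B(i,2)$. Edges of $S(i,1)$ join two vertices at distance one, and these are not removed in forming the open ball of radius two. The vertex sets then agree automatically, since every vertex of $B(i,2)$ other than $i$ is an endpoint of one of the edges classified above.

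Finally, for edge-disjointness, every dipole edge has one endpoint at distance $0$ or $2$ from $i$. By contrast, $S(i,1)$ edges have both endpoints at distance $1$. Hence the edge sets are disjoint. The only real obstacle is the bookkeeping around whether isolated spokes are covered, which the reading fixed above resolves.
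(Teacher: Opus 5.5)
Your argument is correct and is the direct check from the definitions that the paper has in mind; the paper states this proposition without proof. Sorting the edges of $B(i,2)$ by the distances of their endpoints from $i$ gives both containments and the edge-disjointness at once. Your amendment is a necessary correction rather than just a charitable reading, because the statement as written is false. For the path with edges $\{1,2\},\{2,3\},\{3,4\}$ and $i=2$, the spoke $\{1,2\}$ lies in neither $S(2,1)$ nor $Dp(2,4)$. You should therefore say outright that you are proving $B(i,2)=\overline{B}(i,1)\cup Dp(i,j_1)\cup\cdots\cup Dp(i,j_n)$, with only $S(i,1)$ claimed edge-disjoint from the dipoles. This matches the join $J_0$ the paper uses in the paragraph just before the proposition.
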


\begin{figure} 
\begin{tikzpicture}
\draw[fill=black] (0,0) circle (2pt);
\draw[fill=black] (1,0) circle (2pt);
\draw[fill=black] (1.5,0) circle (2pt);
\draw[fill=black] (2.5,0) circle (2pt);
\draw[fill=black] (0.6,1) circle (2pt);
\draw[thick] (0,0) -- (1,0) -- (0.6,1) -- (0,0);
\draw[thick] (1.5,0) -- (2.5,0);

\draw[fill=black] (0.6,-1) circle (3pt);
\draw[thick] (0.6, -1) -- (0,0);
\draw[thick] (0.6, -1) -- (1,0);
\draw[thick] (0.6, -1) -- (1.5,0);
\draw[thick] (0.6, -1) -- (2.5,0);
\draw[thick] (0.6, -1) -- (0.6,1);

\draw[fill=black] (0,2) circle (2pt);
\draw[fill=black] (1.25, 2) circle (2pt);
\draw[fill=black] (2.5,2) circle (2pt);
\draw[thick] (1.25, 2) -- (1.5, 0);
\draw[thick] (1.25, 2) -- (1, 0);
\draw[thick] (1.25, 2) -- (0.6, 1);
\draw[thick] (0, 2) -- (0,0);
\draw[thick] (0, 2) -- (2.5,0);
\draw[thick] (2.5, 2) -- (1.5,0);
\draw[thick] (2.5, 2) -- (2.5,0);

\draw[fill=black] (3.5,0) circle (2pt);
\draw[fill=black] (6,0) circle (2pt);
\draw[fill=black] (3.5,2) circle (2pt);
\draw[fill=black] (4.1,-1) circle (3pt);
\draw[thick] (4.1, -1) -- (3.5,0) -- (3.5,2);
\draw[thick] (4.1, -1) -- (6,0) -- (3.5,2);

\draw[fill=black] (7.4,0) circle (2pt);
\draw[fill=black] (7.9,0) circle (2pt);
\draw[fill=black] (7,1) circle (2pt);
\draw[fill=black] (7.65, 2) circle (2pt);
\draw[fill=black] (7,-1) circle (3pt);
\draw[thick] (7, -1) -- (7,1) -- (7.65, 2);
\draw[thick] (7, -1) -- (7.4,0) -- (7.65, 2);
\draw[thick] (7, -1) -- (7.9,0) -- (7.65, 2);

\draw[fill=black] (9.5,0) circle (2pt);
\draw[fill=black] (10.5,0) circle (2pt);
\draw[fill=black] (10.5,2) circle (2pt);
\draw[fill=black] (8.6,-1) circle (3pt);
\draw[thick] (8.6, -1) -- (9.5,0) -- (10.5, 2) --(10.5,0) -- (8.6, -1);
\end{tikzpicture}
\caption{A ball of radius 2 centered at the thick vertex and its proper dipoles.}
\end{figure}

\begin{remark}
\textit{1. Note that two distinct dipoles in $G$ may have edges and vertices in common, indeed one may be a subgraph of the other. In particular, if two dipoles have vertices in common, a pole of one of them need not be a pole of the other.}

\mpar
\textit{2. There is a one-to-one correspondence between the set of proper dipoles in $G$ and $E_{G^2}\negthickspace\setminus\negthickspace E_G,$ the set of edges in $G^2$ not in $G.$ This shows the edges of $G^2$ can be recovered from the edges and dipoles of $G.$}
\end{remark}

The dipole decomposition described above leads to a canonical form for $DOp^2(G).$

\begin{proposition}
(First Canonical Form). Let $\gamma\colon DOp^2(G)\to\mathcal{X}(G^2)$ be the canonical isomorphism,
$$
\gamma(L)=\sum_{u\in V(tG^2)}L(\pi(u), \pi_+(u))e_u
$$
and define a map $\delta\colon\mathcal{X}(G^2)\to\mathcal{X}(T^2G)$ by the following rule. For every $i,j\in V_G$ such that $d(i,j)=1,$ let, 
$$
\delta(e_{ij})=-\tfrac{1}{2}e_{ij/ji}.
$$
For every $i, j\in V_G$ such that $d(i,j)=2,$ let,
$$
\delta(e_{ij})=\frac{1}{A^2(i,j)}\sum_{k\in V_G} A(i,k)A(k,j)[e_{ik/kj}-e_{ik/ki}],
$$
where $A$ is the adjacency matrix of $G.$ Then $c=\delta\circ\gamma\colon DOp^2(G)\to\mathcal{X}(T^2G)$ and $a\circ c$ is the identity.
\end{proposition}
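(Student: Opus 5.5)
By linearity it suffices to check $a\circ c$ on the privileged basis $\{L^{ij}\mid \{i,j\}\in E_{G^2}\}$ from the proof of Proposition 3.3. First we compute $\gamma(L^{ij})$. Since $L^{ij}\phi=(\phi(j)-\phi(i))e_i$, the only nonzero entries of the matrix of $L^{ij}$ are $L^{ij}(i,j)=1$ and $L^{ij}(i,i)=-1$. The sum defining $\gamma$ runs over $u\in V_{tG^2}$, and these are ordered pairs of \emph{distinct} adjacent vertices of $G^2$, so the diagonal entry never appears. Hence $\gamma(L^{ij})=e_{ij}$, and $c(L^{ij})=\delta(e_{ij})$. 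It therefore remains to show $a(\delta(e_{ij}))=L^{ij}$ for every edge $\{i,j\}$ of $G^2$.

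We split into the two cases of the definition of $\delta$. If $d(i,j)=1$, the proof of Proposition 3.3 already gives $a(e_{ij/ji})\phi=-2(\phi(j)-\phi(i))e_i$. So $a(-\tfrac12 e_{ij/ji})=L^{ij}$.

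If $d(i,j)=2$, the nonzero terms of the sum are indexed by the middle vertices $k$ of the proper dipole $Dp(i,j)$, that is, by $k$ with $A(i,k)A(k,j)=1$. There are exactly $A^2(i,j)\geq 1$ of them, so the normalization is well defined. For each such $k$ we have $a(e_{ik/kj})\phi=([\phi(j)-\phi(i)]-2[\phi(k)-\phi(i)])e_i$, again from the proof of Proposition 3.3. Also $ik/ki$ is a reflection, so $d^2\phi(ik/ki)=d\phi(ki)-d\phi(ik)=-2(\phi(k)-\phi(i))$, which gives $a(e_{ik/ki})\phi=-2(\phi(k)-\phi(i))e_i$. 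Subtracting, $a(e_{ik/kj}-e_{ik/ki})=L^{ij}$ for each middle vertex $k$, independently of $k$. Averaging these $A^2(i,j)$ identical operators over the dipole and using linearity of $a$ yields $a(\delta(e_{ij}))=L^{ij}$.

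The argument is essentially a computation, so the only real obstacle is bookkeeping. We must make sure that the two cases of $\delta$ exhaust the edges of $G^2$, which holds because $E_{G^2}$ consists of the pairs at distance one or two. We must also confirm that $\delta$, defined on the basis $\{e_u\}$ of $\mathcal{X}(G^2)$, extends linearly, and that $\gamma$ is indeed an isomorphism, which follows from Proposition 3.3. Finally, a remark on why the $d(i,j)=2$ case uses an average over the whole dipole: any single choice of $k$ would also give a right inverse, but averaging makes $c$ independent of choices, which is what justifies calling it canonical. Since $a\circ c$ agrees with the identity on a basis, it is the identity, and in particular $c$ is injective.
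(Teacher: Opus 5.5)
Your proof is correct and is essentially the paper's argument. You verify on the basis $\{L^{ij}\}$ that $a(-\tfrac12 e_{ij/ji})=L^{ij}$ when $d(i,j)=1$, and that $a(e_{ik/kj}-e_{ik/ki})=L^{ij}$ for each common neighbor $k$ when $d(i,j)=2$, then conclude by linearity; you are also more careful than the paper, which leaves implicit both $\gamma(L^{ij})=e_{ij}$ (the diagonal entry never enters $\gamma$) and why the normalization by $A^2(i,j)$ makes the average over the dipole again equal to $L^{ij}$.
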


\begin{proof}
Observe that if $d(i,j)=1$ then $a(-\tfrac{1}{2}e_{ik/ki})\phi = (\phi(j)-\phi(i))e_i=L^{ij}\phi$ and if $d(i,j)=2$ then $a(e_{ik/kj}-e_{ik/ki})\phi = (\phi(j)-\phi(i)e_i=L^{ij}$, as well. It follows that $a(\delta(e_{u}))\phi = e_u$ for every $u\in V_{tG^2},$ and therefore $a(\delta\circ\gamma(L))=L$ for all $L\in DOp^2(G).$
\end{proof}

\section{Cardinality of $DOp^2(G)$}

There are elementary formulas for the cardinality of the space of second order differential operators in terms of the adjacency matrix.

\begin{proposition}
Let $G$ be a finite simple graph. Then,
\begin{align*}
|DOp^2(G)| & =\sum_{i\in V_G}\sum_{j\in V_G} A(i,j) + sgn(A^2(i,j))(1-A(i,j))\\
&  =\sum_{i\in V_G} (|V_{B(i,2)}| -1),
\end{align*}
where $sgn(x)=1$, if $x>0, sgn(x) =-1$ if $x<0,$ and $sgn(0)=0.$
\end{proposition}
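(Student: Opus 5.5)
The plan is to count the privileged basis $\{L^{ij}\mid d(i,j)=1,2\}$ of $DOp^2(G)$ from the proof of Proposition 3.3. There it was shown that these operators form a basis, equivalently that $DOp^2(G)\cong\mathcal{X}(G^2)\cong C(tG^2)$. So $|DOp^2(G)|$ equals the number of ordered pairs $(i,j)$ with $1\le d(i,j)\le 2$, which is $|V_{tG^2}|=2|E_{G^2}|$. Both formulas then come from counting these ordered pairs in two ways.

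For the first formula, I would fix $i$ and sum over $j$. The key step is to check, pair by pair, that the summand $A(i,j)+sgn(A^2(i,j))(1-A(i,j))$ is the indicator of $1\le d(i,j)\le 2$. Since $A$ and $A^2$ have nonnegative entries, $sgn(A^2(i,j))\in\{0,1\}$. I would split into cases:
\begin{itemize}
\item[(a)] If $A(i,j)=1$, the summand is $1+sgn(A^2(i,j))\cdot 0=1$, and indeed $d(i,j)=1$.
\item[(b)] If $A(i,j)=0$ and $j\neq i$, the summand is $sgn(A^2(i,j))$. This equals $1$ exactly when there is a walk of length two from $i$ to $j$, i.e. exactly when $d(i,j)=2$ (by definition 3.6.1 of the path metric, given $A(i,j)=0$).
\item[(c)] If $j=i$, then $A(i,i)=0$ because $G$ is simple, while $A^2(i,i)=\Deg(i)$. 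The summand is therefore $sgn(\Deg(i))$, which is $1$ for every non-isolated vertex.
\end{itemize}

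Case (c) is the main obstacle: the diagonal term $j=i$ must not be counted, since $L^{ii}=0$. As literally written, the double sum overcounts by the number of non-isolated vertices. I would handle this by reading the sum as running over $j\neq i$, equivalently by replacing $A^2$ with $A^2$ minus its diagonal, and I would state this correction explicitly in the proof. After the correction, summing the indicator over $(i,j)$ gives the number of ordered pairs at distance one or two, which matches the basis count.

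For the second formula, I would observe that for fixed $i$ the vertices $j$ with $1\le d(i,j)\le 2$ are exactly $V_{B(i,2)}\setminus\{i\}$, by Definition 3.6. Hence the inner sum equals $|V_{B(i,2)}|-1$, and summing over $i$ completes the proof. As a sanity check, Example 2.1 gives $2+2+2=6$ for $P_2$, in agreement with $|DOp^2(P_2)|=6$.
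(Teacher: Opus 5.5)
Your approach matches the paper's proof: count the basis $L^{ij}$, $1\le d(i,j)\le 2$, check that the summand is the indicator of $d(i,j)\in\{1,2\}$, and identify the inner count for fixed $i$ with $V_{B(i,2)}\setminus\{i\}$. Your case (c) is also correct and catches an error the paper's proof glosses over: the paper claims $sgn(A^2(i,j))(1-A(i,j))=1$ iff $d(i,j)=2$, but this fails at $j=i$, where $A^2(i,i)=\Deg(i)$, so the double sum as literally stated exceeds $|DOp^2(G)|$ by the number of non-isolated vertices (it gives $9$ rather than $6$ for $P_2$), and restricting to $j\neq i$ is the right repair.
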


\begin{proof}
We know $|DOp^2(G)| = |\mathcal{X}(G^2)|=2|E_{G^2}|$ and we also know $\{i,j\}\in E_{G^2}$ if and only if $d(i,j)=1,2.$ Evidently, $A(i,j)=1$ if and only if $d(i,j)=1$ and $sgn(A^2(i,j))(1-A(i,j))=1$ if and only if $d(i,j)=2,$ and they equal zero, otherwise. Therefore, the first sum above is counting the edges of $E_{G^2}$ but with multiplicity $2$ because of the symmetry of the adjacency matrix. On the other hand, recall that the set of operators $L^{ij}, d(i,j)=1,2,$ is a basis hence of $DOp^2(G)$. For each fixed $i\in G$ there are $|V_{B(i,2)}-1|$ such vertices and the second formula follows from this.
\end{proof}

So the size of $DOp^2(G)$ is easy to understand analytically. But we would also like to understand it geometrically in terms of the second tangent bundle. Observe that since $a$ is surjective, $|DOp^2(G)|+|\Ker(a)|= |\mathcal{X}(T^2G)|.$ It's possible to evaluate the right hand side in geometric terms. Recall,
$$
\mathcal{X}(T^2G)\cong\bigoplus_{i\in V_G}T^2_i(G)\cong\bigoplus_{i\in V_G}\bigoplus_{\pi(u)=i}T_u(tG) \cong \negthickspace\bigoplus_{u\in V_{tG}} T_u(tG) \cong\mathcal{X}(tG).
$$
Thus  $|DOp^2(G)|+|\Ker(a)|=2|E_{tG}|.$ Let's calcuate this another way.

\begin{theorem}
1. For any connected, finite simple graph $G,$ let $T_G$ be the set of all  triangle subgraphs, and let $\xi(H)=|E_H|-|V_H|+1$ be the cyclomatic number of a graph. Then,
$$
|E_{tG}|+|E_G| = |DOp^2(G)| +3|T_G| +\sum_{i\in V_G}\xi(B(i,2)).
$$
Therefore,  
$$
2|E_G|\leq |DOp^2(G)|\leq |E_{tG}|+|E_G|
$$
with equality on the left if and only if $G$ is a complete graph and equality on the right if and only if for every $i\in V_G, B(i,2)$ is a tree or, equivalently, $G$ has no cycles of length $3$ or $4.$

\mpar
2. The dimension of the kernel of $a$ is,
$$
|\Ker(a)| = |E_{tG}|-|E_G| +3|T_G| +\sum_{i\in V_G}\xi(B(i,2))
$$
and satisfies the inequality,
$$
|E_{tG}|-|E_G|\leq |\Ker(a)|\leq 2\left(|E_{tG}|-|E_G|\right),
$$
with equality on the left if and only if for every $i\in V_G, B(i,2)$ is a tree and equality on the right if and only if $G$ is a complete graph.
\end{theorem}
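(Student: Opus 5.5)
The plan is to reduce everything to a count of edges in the balls $B(i,2)$, using the formula $|DOp^2(G)|=\sum_{i\in V_G}(|V_{B(i,2)}|-1)$ from the preceding proposition.

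\emph{Step 1: compute the left-hand side.} A vertex $ij$ of $tG$ is adjacent to $ji$, to the $\Deg(j)-1$ vertices $jk$ with $k\neq i$, and to the $\Deg(i)-1$ vertices $ki$ with $k\neq j$. Hence $\Deg_{tG}(ij)=\Deg(i)+\Deg(j)-1$. Summing over the $2|E_G|$ vertices of $tG$ gives
$$2|E_{tG}|=2\sum_{i\in V_G}\Deg(i)^2-2|E_G|,$$
so $|E_{tG}|+|E_G|=\sum_{i}\Deg(i)^2$.

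\emph{Step 2: compute the right-hand side.} By the definition of $\xi$ and the preceding proposition,
$$|DOp^2(G)|+\sum_i\xi(B(i,2))=\sum_i |E_{B(i,2)}|.$$
Recall that $B(i,2)$ is the open ball, so it omits the edges lying inside $S(i,2)$. Its edges are therefore of three kinds: the $\Deg(i)$ edges at $i$, the $t_i$ edges of $S(i,1)$ (where $t_i$ is the number of triangles containing $i$), and the $e_i$ edges from $S(i,1)$ to $S(i,2)$.

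Now count the edges leaving each neighbour $j$ of $i$: one goes to $i$, some go to other neighbours of $i$, and the rest go to $S(i,2)$. Summing over the neighbours gives
$$\sum_{j\sim i}\Deg(j)=\Deg(i)+2t_i+e_i,$$
and hence $|E_{B(i,2)}|=\sum_{j\sim i}\Deg(j)-t_i$.

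Next sum over $i$. First, $\sum_i\sum_{j\sim i}\Deg(j)=\sum_j\Deg(j)^2$. Second, $\sum_i t_i=3|T_G|$, since each triangle has three vertices. Comparing with Step 1 yields the identity of part 1.

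\emph{Step 3: the inequalities in part 1.} For the upper bound, note that every ball $B(i,2)$ is connected, so $\xi(B(i,2))\geq0$. Together with $|T_G|\ge 0$ this gives $|DOp^2(G)|\le|E_{tG}|+|E_G|$. Equality holds exactly when there are no triangles and every $\xi(B(i,2))=0$, that is, when every $B(i,2)$ is a tree. This is equivalent to $G$ having no cycles of length $3$ or $4$:
\begin{itemize}
\item any $3$-cycle or $4$-cycle through $i$ lies entirely in $B(i,2)$ (its edges never join two vertices of $S(i,2)$), so such cycles are excluded when the balls are trees;
\item conversely, Remark 3.6 notes that $B(i,2)$ contains no induced cycles of length $\ge5$, and any cycle in $B(i,2)$ forces a triangle or a rectangle.
\end{itemize}
This last equivalence is the step I expect to need the most care, since it must be argued from the star-union description in Remark 3.6.

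For the lower bound, use $|V_{B(i,2)}|-1\geq\Deg(i)$, so that $|DOp^2(G)|\ge\sum_i\Deg(i)=2|E_G|$. Equality holds iff no vertex has a vertex at distance $2$ from it. For connected $G$ this means $G$ is complete.

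\emph{Step 4: part 2.} Since $a$ is surjective, $|\Ker(a)|=2|E_{tG}|-|DOp^2(G)|$. Substituting the identity of part 1 gives the stated formula for $|\Ker(a)|$. Because $|\Ker(a)|=2|E_{tG}|-|DOp^2(G)|$, the two-sided bound on $|DOp^2(G)|$ from part 1 translates directly into the stated bounds on $|\Ker(a)|$, and the equality cases carry over unchanged.
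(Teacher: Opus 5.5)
Your proof is the paper's proof. It uses the same double count $\sum_{j\sim i}\Deg(j)=|E_{B(i,2)}|+|E_{S(i,1)}|$ (the paper's $A\,\Deg(i)$), summed over $i$ and compared with $\sum_i\Deg(i)^2=|E_{tG}|+|E_G|$ and $|DOp^2(G)|=\sum_i(|V_{B(i,2)}|-1)$, then the same bounds and $|\Ker(a)|=2|E_{tG}|-|DOp^2(G)|$. The only difference is that you prove the degree-square identity from $\Deg_{tG}(ij)=\Deg(i)+\Deg(j)-1$ instead of citing the appendix.

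One step needs repair: your justification that every $B(i,2)$ is a tree iff $G$ has no $3$- or $4$-cycles. The paper merely asserts this equivalence.

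Do not rely on the claim in Remark 3.6 that $B(i,2)$ contains no induced cycles of length $\ge 5$, because it is false as stated. In the wheel $W_5$ the rim is an induced $5$-cycle inside the ball about the hub. Even in a triangle-free graph, take $i\sim j_1,j_2,j_3$ with $l_1\sim j_1,j_2$, $l_2\sim j_2,j_3$, $l_3\sim j_3,j_1$. Then $j_1,l_1,j_2,l_2,j_3,l_3$ is an induced $6$-cycle lying in $B(i,2)$.

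Your own Step 2 gives a clean substitute. Since $e_i=\sum_{d(i,l)=2}A^2(i,l)$ and $|V_{B(i,2)}|-1=\Deg(i)+|V_{S(i,2)}|$, you get
$$\xi(B(i,2))=t_i+\sum_{d(i,l)=2}\bigl(A^2(i,l)-1\bigr),$$
which is a sum of nonnegative terms. It vanishes iff there is no triangle at $i$ and every vertex at distance $2$ from $i$ has a unique common neighbor with $i$. That is exactly the absence of triangles and $4$-cycles through $i$: a $4$-cycle $i,j,l,k$ gives $A^2(i,l)\ge 2$ if $d(i,l)=2$, and gives a triangle at $i$ if $l\sim i$. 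This proves the equivalence. It also shows $\xi(B(i,2))\ge 0$ without appealing to connectedness.
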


\begin{proof}
First, observe that the adjacency matrix acts on functions. When applied to the degree function we have,
$$
A\,\Deg(i)=\negthickspace\sum_{\pi(u)=i}\Deg({\pi_+(u)}) = |E_{B(i,2)}| + |E_{S(i,1)}|,
$$
because the sum counts every edge in $B(i,2)$ once with the exception of edges in $S(i,1)$ which are counted twice.  Let's calculate $\sum_{i\in V_G} |E_{B(i,2)}| + |E_{S(i,1)}|$ in two ways. First,
\begin{align*}
\sum_{i\in V_G} |E_{B(i,2)}| + |E_{S(i,1)}| & =\sum_{i\in{V_G}}A \,\Deg(i) = \sum_{i\in V_G}\sum_{j\in V_G}A(i,j)\Deg(j)\\ 
& = \sum_{j\in V_G}\Deg^2(j) = |E_{tG}| + |E_G|.
\end{align*}
(See Appendix A(i) for the last step). Now, every edge in $S(i,1)$ determines a unique triangle rooted at $i$. If we denote the set of all such rooted triangles by $T_G(i)$ then  $|E_{S(i,1)}| = |T_G(i)|.$ Also, let's recall that $|DOp^2(G)|=\sum_{i\in V_G} (|V_{B(i, 2)}|-1)|$, by Proposition 3.1 Thus, using the definition of cyclomatic number we find,
\begin{align*}
|E_{tG}|+|E_G| & = \sum_{i\in V_G}|E_{B(i,2)}| + |E_{S(i,1)}|\\
& = \sum_{i\in V_G}\left(\xi(B(i,2))+ |V_{B(i,2)}| - 1\right) + \sum_{i\in V_G} |T_G(i)|\\ 
& = |DOp^2(G)| + 3|T_G| + \sum_{i\in V_G}\xi(B(i,2)),
\end{align*}
noting that every triangle in $G$ is counted exactly three times in the sum above.

\mpar
The upper bound on $|DOp^2(G)|$ follows directly from this identity and the lower bound comes from the fact that $2|E_G| = |DOp^1(G)|\leq |DOp^2(G)|.$  The cases of equality are also straighforward. First, $DOp^1(G)=DOp^2(G)$ if and only if there is no pair of vertices such that $d(i,j)=2$, meaning every vertex is adjacent to every other vertex. Second, every ball of radius 2 is a tree if and only if there are no $C_3$ or $C_4$ subgraphs or, equivalently, $\xi(B(i,2))=0$ for every vertex $i.$ Since $|\Ker(a)|=2|E_{tG}| - |DOp^2(G)|, $ item 2 is a direct consequence of item 1.
\end{proof}

\begin{remark}
\textit{1. The condition that $B(i,2)$ is a tree for every vertex $i$ is equivalent to the condition that $G$ contains no cycles of length 3 or 4.}

\mpar
\textit{ 2. It's well known how to calculate $3|T_G|$ in terms of the adjacency matrix. Observe that,
$$
A^3(i,i)=\sum_{j\in V_G}\sum_{k\in V_G}A(i,j)A(j,k)A(k,i),
$$
and that a term in the sum equals $1$ if $\{i,j,k\}$ is a triangle and $0$ otherwise. By symmetry of $A(j,k),$ the sum equals $2|T_G(i)|.$ Observing that a fixed triangle contributes to the local triangle count at each of its vertices we find, 
$$
3|T_G|= \sum_{i\in V_G} |T_G(i)|= \tfrac{1}{2}tr A^3.
$$
Therefore, all the intricacy of $|DOp^2(G)|$ is captured in the sum $\sum_{i\in V_G}\xi(B(i,2)).$ }

\mpar
\textit{3. The formula for $|\ker(a)|$ consists of a positive term $|E_{tG}|-|E_G|,$ equal to the number of transverse  edges of $tG,$ plus two nonnegative terms: $3|T(G)|,$ thrice the number of triangles, and $\sum_{i\in V_G}\xi(B(i,2)),$ the sum of the number of linearly independent cycles in each ball of radius two. Thus the formula, in principle, labels a basis for $\Ker(a)$ in terms of geometric objects. However, since balls of radius 2 overlap in complicated ways it's not clear there is such a labelling in practice.}
\end{remark}
 
\section{Second Canonical Form via the Helmholtz Decomposition}
We've seen that $a$ always has a non-trivial kernel, unless $G$ consists of a single edge. Since understanding $\Ker(a)$ is equivalent to understanding $DOp^2(G)$ in geometric terms let's turn our attention there. The following definition introduces the terms of argument.

\begin{definition}
1. For each $Z\in\mathcal{X}(T^2G)$ and each $i\in V_G,$  define the section $Z_i=(e_i\circ\pi^2)\cdot Z\in\mathcal{X}(T^2G)$ by the rule,
$$
Z_i(\alpha)=e_i(\pi^2(\alpha))Z(\alpha)=
\begin{cases}
Z(\alpha), & \pi^2(\alpha)=1.\\
0, &\text{otherwise}.
\end{cases}
$$
\mpar
2. The \textit{generalized divergence} $\DIV\colon\mathcal{X}(T^2(G))\to\mathcal{X}(Ch^2(G))$ is the operator,
$$
\left(\DIV Z\right)_i(j)=\Div\circ\Div_{tG}\left(Z_i\right)(j).
$$

\mpar
3.  The \textit{Hessian}   $\Hess\colon\mathcal{X}(Ch^2(G))\to\mathcal{X}(T^2(G))$ is the operator,
$$
\Hess\zeta(\alpha)=d^2\zeta_{\pi^2(\alpha)}(\alpha).
$$

\mpar
4. The \textit{flux}  $\lozenge\colon\mathcal{X}(T^2(G))\to\mathcal{X}(Lf^2(G))$ is the operator,
$$
\left(\lozenge Z\right)_i(j)=\sum_{\substack{\pi^2(\alpha)=i\\
\pi_+^2(\alpha)=j}} Z(\alpha).
$$
\end{definition}

\begin{remark}
\textit{1. To reduce notational clutter we often write $\left(\DIV Z\right)_i(j)=\DIV Z_i(j)$ and $\left(\lozenge Z\right)_i(j)=\lozenge Z_i(j),$ although it is a slight abuse of notation.}

\mpar
\textit{2. In \cite{M1} we defined $\Hess$ as a map from $C(G)$ to $\mathcal{X}(T^2(G))$ and now we are using the same notation for a map from $\mathcal{X}(Ch^2G)$ to $\mathcal{X}(T^2(G)).$ While this usage is inconsistent, the two maps are closely related and no confusion is likely to arise.}

\mpar
\textit{3. Note that  $\lozenge Z_i(j),\, i\neq j,$ is the sum of terms of the form $Z(ik/kj),$ where $\{i,k\}$ and $\{k,j\}$ are edges of $G,$ and $\lozenge Z_i(i)$ is the sum of terms of the form $Z(ij/ki)$, where $\{i,j\}$ and $\{i,k\}$  are edges of $G.$ If we think of $Z(\alpha)$ as a rate or a flux, then $\lozenge Z_i(j)$ is the total flux from $i$ to $j.$ When $d(i,j)=2,$ the sum defining $\lozenge Z_i(j)$ is indexed by incident edges in the dipole $Dp(i,j).$ Observe that $\lozenge Z_i(j)\neq\lozenge Z_j(i),$ in general.}
\end{remark}

The following proposition characterizes $\ker(a)$, provides a formula for calculating $\DIV Z,$ and shows that $\Hess$ and $\DIV$ are mutually adjoint. 

\mpar
In preparation, it's helpful to review some ideas and notation. Recall that the tangent graph is functorial in the sense that if $h\colon G\to K$ is a graph homomorphism then $dh\colon tG \to tK$ is also a graph homomorphism where $dh(ij)=h(i)h(j).$ (See Appendix A(i)). When applied to the involution $\sigma\colon tG\to tG$ this observation yields the involution,
$$
d\sigma\colon t^2G\to t^2G
$$
where $d\sigma(ij/kl) =\sigma(ij)\sigma(kl) = ji/lk.$ (Recall that $ij/kl\in V_{t^2G}$ if and only if $\{i,j\},\,\{k,l\}\in E_G$ and either $j=k$ or $i=l$). Since $t^2G$ is the tangent graph of $tG$ it has its own involution $\sigma_{tG}\colon t^2G\to t^2G$ where $\sigma_{tG} (ij/kl) = kl/ij.$  Obviously, $\sigma_{tG}\neq\ d\sigma$ and, slightly less obviously, they generate a group of symmetries of $t^2G$ isomorphic to $\mathbb{Z}_2\times\mathbb{Z}_2.$ In fact,
$$
\sigma_{tG}\circ d\sigma(ij/lk)=lk/ji=d\sigma\circ\sigma_{tG}(ij/kl)
$$
which shows these involutions commute. By abuse of notation we often drop the subscript and write the involutions as $\sigma$ and $d\sigma.$ While this is ambiguous, context usually makes usage clear.

\mpar
It's convenient to extend the graph homomorphism $\pi\colon tG\to G$ to a map $\pi\colon\mathcal{X}(G)\to C(G) $ by the rule $\pi X(i)=\sum_{\pi(u)=i}X(u).$ When applied to the morphism $\pi_+\colon tG\to G$ we have the extension $\pi_+X(i)= \sum_{\pi_+(u)=i}X(u)$ and in this notation we can write the divergence operator as,
$$
\Div X(i)=\pi_+X(i)-\pi X(i).
$$
By the same token we can extend the graph homomorphism
$\pi_{tG}\colon t^2G\to tG$ to a map  $\pi_{tG}\colon\mathcal{X}(tG)\to\mathcal{X}(G)$ where,
$$
\pi_{tG}Z(u)=\sum_{\pi_{tG}(\alpha)=u}Z(\alpha).
$$
There is also the graph homomorphism $d\pi\colon t^2G\to tG$ given by the rule,
$$
d\pi(ij/kl)=\pi(ij)\pi(kl)=ik
$$
and its extension $d\pi\colon\mathcal{X}(tG)\to\mathcal{X}(G)$ where $d\pi Z(u)=\sum_{d\pi(\alpha)=u} Z(\alpha).$ Note that $\mathcal{X}(tG)\cong\mathcal{X}(T^2G)$ so these operators are well defined on second order vector fields. Similar considerations apply to $d\pi_+X(u)=\sum_{d\pi_+(\alpha)=u} Z(\alpha).$

\mpar
Finally, compositions of  $\pi_{tG}, \pi_{tG\, +}$ followed by $\pi, \pi_+$ occur frequently. To avoid notational clutter we often drop the subscript $tG$  and write $\pi^2=\pi\circ\pi_{tG}$ and $\pi_+^2=\pi\circ\pi_{tG\, +},$ etc., when the context is clear. For example, we have,
\begin{align*}
\Div_{tG}\Div & = (\pi_+-\pi)(\pi_+-\pi)\\
&  =\pi_+^2-\pi\circ\pi_+-\pi_+\circ\pi +\pi^2.
\end{align*}

\begin{proposition}
1. We have,
$$
a(Z)\phi(i)=\sum_{j\in V_G}\DIV Z_i(j)\phi(j),
$$
hence, $\Ker(a)=\Ker(\DIV).$

\mpar
2. The generalized divergence $\DIV$ can be calculated in terms of the flux $\lozenge$ in the sense that,
$$
\DIV Z_i(j)= \lozenge Z_i(j) +
\begin{cases}
\pi^2 Z(i), & \text{$j=i$},\\
 -\pi Z(ij) - d\pi Z(ij), & \text{$d(i,j)=1$},\\
0, & \text{$d(i,j)=2$.}
\end{cases}
$$

\mpar
3. $a(Z)\in DOp^1(G)$ if and only if $\DIV Z\in \mathcal{X}(Ch^1G)$ or, equivalently, $\lozenge Z_i(j)=0$ for all $d(i,j)=2.$ Specifically, $a(Z)$ is a vector field if and only if for all $d(i,j)=2,$
$$
\sum_{k\in V_G}A(i,k)A(k,j)Z(ik/kj)=0.
$$
\end{proposition}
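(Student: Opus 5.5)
The whole proof rests on one expansion: write $a(Z)\phi(i)$ as a sum of single values $\phi(j)$ by expanding each double difference $d^2\phi(\alpha)$ over the four vertices of $\alpha$. For $\alpha=uv$ with $u=ij$ and $v=kl$ we have
$$
d^2\phi(\alpha)=\phi(\pi_+^2\alpha)-\phi(\pi\pi_+\alpha)-\phi(\pi_+\pi\alpha)+\phi(\pi^2\alpha).
$$
Sum this against $Z(\alpha)$ over those $\alpha$ with $\pi^2(\alpha)=i$, which is the same as summing over $Z_i$. Collecting the coefficient of $\phi(j)$ gives
$$
(\pi_+^2-\pi\pi_+-\pi_+\pi+\pi^2)Z_i(j).
$$
By the displayed factorization $\Div_{tG}\Div=(\pi_+-\pi)(\pi_+-\pi)$, this coefficient is exactly $\DIV Z_i(j)$, in whatever order the composition is written, since the four-term expansion is symmetric. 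That proves the formula in item 1. Because the $e_j$ form a basis and $\phi$ is arbitrary, $a(Z)=0$ holds exactly when all these coefficients vanish, so $\Ker(a)=\Ker(\DIV)$. The same identity also shows that $\DIV Z_i$ is a $2$-charge: its support lies in $B(i,2)$, and its coefficients sum to zero because $a(Z)1=0$.

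For item 2 I evaluate each of the four terms at the point $j$, separating the cases $j=i$, $d(i,j)=1$ and $d(i,j)=2$ and using the two shapes of $\alpha$: the forward shape $ik/kl$ and the backward shape $ik/li$, with reflections covered by both. Three of the terms are straightforward.

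\emph{The $\pi_+^2$ term.} $\pi_+^2Z_i(j)$ sums $Z(\alpha)$ over $\alpha$ based at $i$ whose final endpoint is $j$. This is precisely $\lozenge Z_i(j)$ in every case.

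\emph{The $\pi^2$ term.} $\pi^2 Z_i(j)$ equals $\delta_{ij}\,\pi^2Z(i)$, which supplies the diagonal term $\pi^2Z(i)$.

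\emph{The $\pi_+\pi$ term.} $\pi_+\pi Z_i(j)$ sums $Z(\alpha)$ with $\pi(\alpha)=ij$. This is nonzero only when $d(i,j)=1$, where it equals $\pi Z(ij)$.

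\emph{The $\pi\pi_+$ term.} This is the step I expect to need care. $\pi\pi_+Z_i(j)$ sums $Z(\alpha)$ over $\alpha$ based at $i$ with $\pi(\pi_+\alpha)=j$. For the forward shape $\alpha=ik/kl$ this forces $k=j$, so $d(i,j)=1$. For the backward shape $\alpha=ik/li$ it forces $l=j$, again $d(i,j)=1$. Such $\alpha$ are exactly those with $d\pi(\alpha)=\pi(ik)\pi(kl)=ij$. So this term equals $d\pi Z(ij)$ when $d(i,j)=1$, and it vanishes when $j=i$ or $d(i,j)=2$. The thing to confirm is that the reflection $ij/ji$ is counted once and consistently in both $\pi Z(ij)$ and $d\pi Z(ij)$. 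It is: it lies in the fiber $\pi(\alpha)=ij$, and $d\pi(ij/ji)=ij$.

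Collecting the four terms with their signs gives the stated case formula.

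For item 3, $a(Z)\in DOp^1(G)$ means its matrix entries vanish whenever $d(i,j)=2$. By item 1 these entries are $\DIV Z_i(j)$, so the condition is that $\DIV Z$ is a section of $Ch^1$. By item 2, at distance two we have $\DIV Z_i(j)=\lozenge Z_i(j)$. Finally, Remark 5.2(3) identifies $\lozenge Z_i(j)$ for $d(i,j)=2$ with the sum of $Z(ik/kj)$ over common neighbours $k$, which is $\sum_k A(i,k)A(k,j)Z(ik/kj)$.
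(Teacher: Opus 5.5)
Your proof is correct and takes essentially the paper's route. The paper gets item 1 from the adjointness of $\nabla_{tG}\nabla$ and $\Div\Div_{tG}$, which comes to the same thing as your direct four-term expansion of $d^2\phi(\alpha)$. It then evaluates the same four sums case by case for item 2, including the identification of the $\pi\circ\pi_+$ term with $d\pi Z(ij)$, and reads item 3 off from the definitions as you do.
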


\begin{proof}
Item 1 is a straightforward calculation using the fact that $\nabla_{tG}\nabla$ and $\Div\Div_{tG}$ are adjoints of one another, namely,
\begin{align*}
a(Z)\phi(i) & = \sum_{\pi^2(\alpha)=i}Z(\alpha)d^2\phi(\alpha)\\
& = \sum_{\alpha\in V_{t^2G}}Z_i(\alpha)\nabla_{tG}\nabla\phi(\alpha)\\
& = \sum_{j\in V_G}\Div\Div_{tG}Z_i(u)\phi(j) = \sum_{j\in V_G}\DIV Z_i(j)\phi(j).
\end{align*}
To calculate $\DIV Z_i(j)$ observe that for any $W\in\mathcal{X}(T^2G)$ we have,
\begin{align*}
\Div\Div_{tG}  W & = \sum_{\alpha\in V_{t^2G}}W(\alpha)\Div\Div_{tG}e_{\alpha}\\
& = \sum_{\alpha\in V_{t^2G}}W(\alpha)[e_{\pi_+^2(\alpha)}-e_{\pi\circ\pi_+(\alpha)} - e_{\pi_+\circ\pi(\alpha)} + e_{\pi^2(\alpha)}],
\end{align*}
hence,
$$
\Div\Div_{tG}W(j)=\sum_{\pi_+^2(\alpha)=j} \negthickspace\negthickspace W(\alpha)\thickspace - \negthickspace\sum_{\pi\circ\pi_+(\beta)=j} \negthickspace\negthickspace W(\beta)\thickspace - \negthickspace\sum_{\pi_+\circ\pi(\gamma)=j} \negthickspace\negthickspace W(\gamma)\thickspace + \negthickspace\sum_{\pi^2(\delta)=j} \negthickspace W(\delta).
$$
On substituting $W=Z_i=(e_i\circ\pi^2)\cdot Z$ we have,
\begin{align*}
(\DIV Z)_i(j) & = \Div\Div_{tG}( Z_i)(j)\\
& = \sum_{\substack{\pi^2(\alpha)=i\\\pi_+^2(\alpha)=j}} \negthickspace\negthickspace Z(\alpha)\thickspace - \negthickspace\sum_{\substack{\pi^2(\beta)=i\\ \pi\circ\pi_+(\beta)=j}} \negthickspace\negthickspace Z(\beta)\thickspace - \negthickspace\sum_{\substack{\pi^2(\gamma)=i\\\pi_+\circ\pi(\gamma)=j}} \negthickspace\negthickspace Z(\gamma)\thickspace + \negthickspace\sum_{\substack{\pi^2(\delta)=i\\\pi^2(\delta)=j}} \negthickspace Z(\delta),\\
& = I - II - III + IV.
\end{align*}
The next step is to evaluate these sums when $d(i,j)=0, 1, 2.$ First, observe that $I=\lozenge Z_i(j)$ for all possible values of $i$ and $j$ and that $IV=\pi^2 Z(i)$ if $i=j$  and $IV = 0$ if $i\neq j$. 

\mpar
Note that $II$ and $III$ are empty sums when $i=j$ hence $\DIV Z_i(i)=\lozenge Z_i(i) +\pi^2 Z(i).$ 

\mpar
Next, if $d(i,j)=2$ then $II$ and $II$ are also empty sums hence $\DIV Z_i(j)=\lozenge Z_i(j).$ 

\mpar
So, suppose $d(i,j)=1.$ Then,
\begin{align*}
II & = \sum_{d\pi(\beta)=ij} Z(\beta) = d\pi Z(ij)\\
III & = \sum_{\pi(\gamma)=ij} Z(\gamma) = \pi Z(ij)
\end{align*}
and item 2 follows from this. Item 3 follows from the definitions and this completes the proof. 
\end{proof} 

\begin{proposition}
1. $\Hess\colon\mathcal{X}(Ch^2(G))\to\mathcal{X}(T^2(G))$ is injective. 
\mpar
2. $\Hess$ and $\DIV$ are mutually adjoint in the sense that for all $\zeta\in\mathcal{X}(Ch^2(G))$ and $Z\in\mathcal{X}(T^2(G))$ we have,
$$
\langle\Hess\zeta, Z\rangle_{\mathcal{X}(T^2(G))} = \langle\zeta, \DIV Z\rangle_{\mathcal{X}(Ch^2(G))}.
$$
In particular, $\Ker(\DIV)=\Image(\Hess)^{\perp}$ with respect to the canonical inner product on $\mathcal{X}(T^2(G))$ and $\DIV\colon\Image(\Hess)\to\mathcal{X}(Ch^2(G))$ is an isomorphism.

\mpar
3. The operator $\Delta_{Ch^2(G)} =\DIV\circ\Hess$ is the Laplacian acting on sections of the $2$-charge bundle. It is a positive, self-adjoint operator and an isomorphism of $\mathcal{X}(Ch^2(G)).$
\end{proposition}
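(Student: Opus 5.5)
I will prove item 2 first, since items 1 and 3 follow from it. The inner product on $\mathcal{X}(Ch^2(G))$ is the one inherited from $\mathcal{X}(Lf^2(G))$, namely $\langle\zeta,\eta\rangle=\sum_{i}\sum_j\zeta_i(j)\eta_i(j)$. Fix $\zeta$ and $Z$, and split $Z=\sum_{i\in V_G}Z_i$ according to base point. Then
$$
\langle\Hess\zeta,Z\rangle=\sum_{i\in V_G}\sum_{\pi^2(\alpha)=i}d^2\zeta_i(\alpha)Z(\alpha)=\sum_{i\in V_G}\langle\nabla_{tG}\nabla\zeta_i,Z_i\rangle_{\mathcal{X}(T^2G)}.
$$
Next I use the adjointness of $\nabla$ and $\Div$ on $G$, together with that of $\nabla_{tG}$ and $\Div_{tG}$ on $tG$. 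These are the same facts already used in the proof of Proposition 5.3(1), where $\nabla_{tG}\nabla$ was identified with $(\Div\Div_{tG})^*$. Each summand therefore equals $\langle\zeta_i,\Div\Div_{tG}Z_i\rangle_{C(G)}=\sum_j\zeta_i(j)\DIV Z_i(j)$. Summing over $i$ gives the identity.

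One point needs care: $\DIV Z$ must actually land in $\mathcal{X}(Ch^2(G))$. It is $2$-local because $\pi^2_+(\alpha)$, $\pi\pi_+(\alpha)$ and $\pi_+\pi(\alpha)$ all lie within distance $2$ of $\pi^2(\alpha)$. Its charge vanishes because $\sum_j\Div W(j)=0$ for every $W$. With this in hand, $\Ker(\DIV)=\Image(\Hess)^\perp$ is the standard finite-dimensional statement $\Ker(T^*)=\Image(T)^\perp$.

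For item 1, suppose $\Hess\zeta=0$. Then $d^2\zeta_i(\alpha)=0$ for every $\alpha$ based at $i$. Take $\alpha=ij/ji$; this gives $-2(\zeta_i(j)-\zeta_i(i))=0$, so $\zeta_i(j)=\zeta_i(i)$ whenever $d(i,j)=1$. Take $\alpha=ik/kj$ with $d(i,j)=2$; this gives $\zeta_i(j)-2\zeta_i(k)+\zeta_i(i)=0$, hence $\zeta_i(j)=\zeta_i(i)$. So $\zeta_i$ is constant on $B(i,2)$. Since $\zeta_i$ is a charge, this constant $c$ satisfies $c\,|V_{B(i,2)}|=0$, so $c=0$ and $\zeta_i=0$. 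This is essentially the surjectivity computation of Proposition 3.3 run in reverse. In the degenerate case of an isolated vertex both sides vanish, and we may ignore it since $G$ is connected.

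The remaining claim of item 2 then follows. Because $\Hess$ is injective, $\DIV=\Hess^*$ is surjective, and $\DIV$ restricted to $\Image(\Hess)=\Ker(\DIV)^\perp$ is injective; hence it is an isomorphism onto $\mathcal{X}(Ch^2(G))$. As a dimension check, $\dim\Image(\Hess)=\dim\mathcal{X}(Ch^2G)=|DOp^2(G)|$, which is consistent with Theorem 4.2.

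For item 3, $\Delta_{Ch^2}=\Hess^*\Hess$ is self-adjoint, and $\langle\Delta\zeta,\zeta\rangle=\|\Hess\zeta\|^2\ge0$, with equality only when $\zeta=0$ by item 1. So it is positive definite, hence invertible on the finite-dimensional space.

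The main obstacle is the bookkeeping in item 2. It must be checked that restricting to $Z_i$ and pairing with $\zeta_i$ correctly localizes the global adjointness, and in particular that $\Hess\zeta(\alpha)$ uses $\zeta_{\pi^2(\alpha)}$, which is exactly the cutoff defining $Z_i$.
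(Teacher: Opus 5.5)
Your proof is correct and follows the paper's argument. You use the same $ij/ji$ and $ik/kj$ computation for injectivity, the same base-point splitting $Z=\sum_i Z_i$ with the adjointness of $\nabla,\Div$ and $\nabla_{tG},\Div_{tG}$ for the pairing identity, and the same positivity argument for $\Delta_{Ch^2(G)}$. The only variations are minor. You check explicitly that $\DIV Z$ is a $2$-charge, which the paper assumes in its definition of $\DIV$. You also obtain the isomorphism $\DIV\colon\Image(\Hess)\to\mathcal{X}(Ch^2(G))$ directly from the injectivity of $\Hess$, whereas the paper routes it through $\Ker(\DIV)=\Ker(a)$ and the surjectivity of $a$.
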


\begin{proof}
To show $\Hess$ is injective let's suppose $d^2\zeta_{\pi^2(\alpha)}(\alpha)=0$ for all $\alpha\in V_{t^2G}.$  If $\{i,j\}\in E_G$ then,
$$
d^2\zeta_i(ij/ji) = 2(\zeta_i(i)-\zeta_i(j)) = 0,
$$
hence $\zeta_i(j)=\zeta_i(i)$ for all $j\in V_G$ with $d(i,j)=1.$ Next, suppose $d(i,j)=2.$ Then there is a vertex $k$ such that $\{i,j\}, \{k,j\}\in E_G.$ Keeping in mind that $d(i,k)=1$ we have,
$$
d^2\zeta_i(ik/kj) = \zeta_i(j)-2\zeta_i(k) + \zeta_i(i)= \zeta_i(j)-\zeta_i(i) = 0, 
$$
Hence $\zeta_i(j)=\zeta_i(i)$ for all vertices such that $d(i,j)\leq 2.$ Since $\zeta_i $ is constant on its support and has zero average, it vanishes identically and therefore $\Hess$ is injective. 

\mpar
Item 2 is a straightforward consequence of the mutual adjointness of gradient and divergence on $G$ and $tG$ but it's helpful to write it out, nonetheless. Recalling that $d^2\phi(\alpha)= \nabla_{tG} \nabla\phi(\alpha)$ we have,
\begin{align*}
\langle \Hess\zeta, Z\rangle_{\mathcal{X}(T^2G)} & = \sum_{\alpha\in V_{t^2G}}d^2\zeta_{\pi^2(\alpha)}(\alpha) Z(\alpha)\\
& = \sum_{i\in V_G}\sum_{\alpha\in V_{t^2G}}e_i(\pi^2(\alpha))d^2\zeta_i(\alpha) Z(\alpha)\\
& = \sum_{i\in V_G}\sum_{\alpha\in V_{t^2G}}d^2\zeta_i(\alpha) Z_i(\alpha)
\end{align*}

Fix $i\in V_G.$ Letting $\phi=\zeta_i$ and $W=Z_i$ we have,
\begin{align*}
\sum_{\alpha\in V_{t^2G}}d^2\zeta_i(\alpha) Z_i(\alpha) & = \langle \nabla_{tG}\nabla\phi, W\rangle_{\mathcal{X}(tG)}\\
\vspace{-1 em}
&=\langle\phi,\Div\Div_{tG}W\rangle_{C(G)}\\ 
& =\sum_{j\in V_G}\zeta_i(j)\DIV Z_i(j).
\end{align*}
Therefore,
\begin{align*}
\langle \Hess\zeta, Z\rangle_{\mathcal{X}(T^2G)} & = \sum_{i\in V_G}\sum_{j\in V_G} \zeta_i(j)\DIV Z_i(j)\\
& = \sum_{i\in V_G}\langle\zeta_i,\DIV Z_i\rangle_{Ch_i^2(G)}\\
& = \langle \zeta, \DIV Z\rangle_{\mathcal{X}(Ch^2(G))}.
\end{align*}

\mpar
By familiar arguments this identity implies $\Ker(\DIV)=\Image(\Hess)^{\perp}.$ But then we have $\Image(\Hess)=\Ker(\DIV)^{\perp} =\Ker(a)^{\perp},$ by Proposition 5.3.1. Thus, the range of $\DIV$ restricted to $\Image(\Hess)$ is isomorphic to the range of $a$ restricted to $\Ker(a)^{\perp}$ which equals $DOp^2(G)$ Since $DOp^2(G)\cong\mathcal{X}(Ch^2(G))$ this shows $\DIV$ is an isomorphism from $\Image(\Hess)$ to $\mathcal{X}(Ch^2(G)).$

\mpar
That the Laplacian is non-negative and self-adjoint is a consequence of the fact that $\Hess$ and $\DIV$ are mutually adjoint, by standard arguments.  By items 1 and 2, $\Hess$ is injective and $\DIV$ restricted to $\Image(\Hess)$ is an isomorphism hence $\Delta_{Ch^2(G)}$ is a bijection. Finally, zero can't be in the spectrum of $\Delta_{Ch^2(G)}$ because it's injective which means it's a positive operator.
\end{proof}
\begin{theorem}
(Helmholtz Decomposition). The operator $p_H=\Hess\circ\Delta_{Ch^2G}^{-1}\circ\DIV$ is an orthogonal projection. Therefore,
$$
\mathcal{X}(T^2G) = \Image(\Hess)\oplus\Ker(\DIV)
$$
is an orthogonal decomposition where $\Ker(p_H)=\Ker(\DIV)=\Ker(a)$ and $\Image(p_H)=\Image(\Hess)=\Ker(a)^{\perp}.$
\end{theorem}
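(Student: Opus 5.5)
We will argue directly from Proposition 5.4, which gives that $\Hess$ and $\DIV$ are mutually adjoint, that $\Hess$ is injective, and that $\Delta_{Ch^2G}=\DIV\circ\Hess$ is a positive self-adjoint isomorphism of $\mathcal{X}(Ch^2G)$. The inverse $\Delta_{Ch^2G}^{-1}$ therefore exists and is itself self-adjoint. Hence $p_H=\Hess\circ\Delta_{Ch^2G}^{-1}\circ\DIV$ is a well-defined linear operator on $\mathcal{X}(T^2G)$. The plan is first to verify the algebraic identities $p_H^2=p_H$ and $p_H^*=p_H$, and then to identify the kernel and image.

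For idempotence we compute
$$
p_H^2=\Hess\,\Delta_{Ch^2G}^{-1}(\DIV\,\Hess)\,\Delta_{Ch^2G}^{-1}\DIV=\Hess\,\Delta_{Ch^2G}^{-1}\DIV=p_H,
$$
using $\DIV\circ\Hess=\Delta_{Ch^2G}$. For self-adjointness we take adjoints factor by factor with respect to the canonical inner products on $\mathcal{X}(T^2G)$ and $\mathcal{X}(Ch^2G)$. By item 2 of Proposition 5.4 we have $\Hess^*=\DIV$ and $\DIV^*=\Hess$, and $(\Delta_{Ch^2G}^{-1})^*=\Delta_{Ch^2G}^{-1}$. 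This gives $p_H^*=\DIV^*\,(\Delta_{Ch^2G}^{-1})^*\,\Hess^*=p_H$. An idempotent self-adjoint operator is an orthogonal projection, so $\mathcal{X}(T^2G)=\Image(p_H)\oplus\Ker(p_H)$ orthogonally.

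It remains to identify $\Image(p_H)$ and $\Ker(p_H)$.
\begin{itemize}
\item Clearly $\Image(p_H)\subset\Image(\Hess)$. Conversely, if $Z=\Hess\zeta$, then $p_H Z=\Hess\,\Delta_{Ch^2G}^{-1}\Delta_{Ch^2G}\zeta=Z$, so $\Image(p_H)=\Image(\Hess)$.
\item Clearly $\Ker(\DIV)\subset\Ker(p_H)$. Conversely, if $p_HZ=0$, then injectivity of $\Hess$ (Proposition 5.4.1) and invertibility of $\Delta_{Ch^2G}$ force $\DIV Z=0$. Hence $\Ker(p_H)=\Ker(\DIV)$.
\end{itemize}
By Proposition 5.3.1, $\Ker(\DIV)=\Ker(a)$. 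By Proposition 5.4.2, $\Image(\Hess)=\Ker(\DIV)^\perp=\Ker(a)^\perp$. This yields all the stated identifications.

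No step is a genuine obstacle. The only care needed is in the adjoint computation: the factors of $p_H$ act between different spaces, $\mathcal{X}(T^2G)$ and $\mathcal{X}(Ch^2G)$. We must therefore check that the inner product on $\mathcal{X}(Ch^2G)$ used in Proposition 5.4 is the one with respect to which $\Delta_{Ch^2G}$ is self-adjoint. The proof of Proposition 5.4 confirms this, since that inner product is the restriction of the one on $\mathcal{X}(Lf^2G)$.
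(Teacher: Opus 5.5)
Your proof is correct and follows the paper's argument. You get idempotence from $\DIV\circ\Hess=\Delta_{Ch^2G}$, identify the kernel and image using injectivity of $\Hess$ and invertibility of $\Delta_{Ch^2G}$, and finish with Propositions 5.3.1 and 5.4.2. The only difference is that you establish orthogonality by checking $p_H^*=p_H$ factor by factor, whereas the paper deduces it from $\Ker(\DIV)^{\perp}=\Image(\Hess)$ once the kernel and image are identified; both are valid.
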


\begin{proof}
To see that $p_H$ is a projection note that,
$$
p_H^2 =(\Hess\circ\Delta_{Ch^2G}^{-1})\circ(\DIV\circ\Hess)\circ(\Delta_{Ch^2G}^{-1}\circ\DIV) = p_H.
$$
Since $\Hess$ is injective and $\Delta_{Ch^2G}$ is bijective we have $\Ker(p_H)=\Ker(\DIV)$ and $\Image(p_H)=\Image(\Hess).$ But by Proposition 5.4.2, $\Ker(\DIV)^{\perp}=\Image(\Hess)$, which shows that $p_H$ is orthogonal. By Proposition 5.3.1, $\Ker(\DIV)=\Ker(a)$ and this completes the proof.
\end{proof}

\begin{corollary}
(Second Canonical Form). Let $\gamma\colon DOp^2(G)\to\mathcal{X}(Ch^2(G))$ be the canonical isomorphism $\gamma(L)_i(j) =L(i,j)$ and let $c\colon DOp^2(G)\to\mathcal{X}(T^2(G))$ be the map $c=\Hess\circ\,\Delta_{Ch^2(G)}^{-1}\circ\gamma.$ Then $a\circ c$ is the identity on $DOp^2(G).$ In particular, $DOp^2(G)\cong\Image(\Hess).$
\end{corollary}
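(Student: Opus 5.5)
The plan is to compute $a\circ c$ directly, using Proposition 5.3.1 to rewrite $a$ in terms of $\DIV$, and then to use the identity $\DIV\circ\Hess=\Delta_{Ch^2(G)}$ from Proposition 5.4.3.

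\textbf{Step 1: evaluate $a(c(L))$.} Fix $L\in DOp^2(G)$ and set $\zeta=\gamma(L)$, so that $\zeta_i(j)=L(i,j)$. Proposition 3.3 gives $DOp^2(G)\cong\mathcal{X}(Ch^2(G))$, so $\zeta$ is indeed a section of the $2$-charge bundle. Since $\Delta_{Ch^2(G)}$ is an isomorphism of $\mathcal{X}(Ch^2(G))$ by Proposition 5.4.3, the section $\eta=\Delta_{Ch^2(G)}^{-1}\zeta$ is well defined, and $c(L)=\Hess\,\eta$.

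By Proposition 5.3.1,
$$
a(c(L))\phi(i)=\sum_{j\in V_G}\DIV(\Hess\,\eta)_i(j)\,\phi(j).
$$
By definition $\DIV\circ\Hess=\Delta_{Ch^2(G)}$. Hence
$$
\DIV(\Hess\,\eta)=\Delta_{Ch^2(G)}\Delta_{Ch^2(G)}^{-1}\zeta=\zeta,
$$
and so
$$
a(c(L))\phi(i)=\sum_{j}\zeta_i(j)\phi(j)=\sum_j L(i,j)\phi(j)=L\phi(i).
$$
This shows that $a\circ c$ is the identity.

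\textbf{Step 2: the isomorphism $DOp^2(G)\cong\Image(\Hess)$.} Since $a\circ c=\mathrm{id}$, the map $c$ is injective. Its image lies in $\Image(\Hess)$ by construction. The image is in fact all of $\Image(\Hess)$: $\gamma$ and $\Delta^{-1}_{Ch^2(G)}$ are bijections onto $\mathcal{X}(Ch^2(G))$, so $c$ maps onto $\Hess(\mathcal{X}(Ch^2(G)))=\Image(\Hess)$. An alternative route is Theorem 5.5, which gives $\Image(\Hess)=\Ker(a)^\perp$; then $a$ restricted to it is an isomorphism onto $DOp^2(G)$, with inverse $c$.

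\textbf{Main obstacle.} The only real point is bookkeeping. We must make sure that $\DIV Z$, viewed through Proposition 5.3.1, recovers the matrix entries $L(i,j)$ with the same indexing convention $\zeta_i(j)=L(i,j)$ as $\gamma$. We must also confirm that $\DIV$ actually lands in $\mathcal{X}(Ch^2(G))$: its $i$-th component has zero sum because $a(Z)$ annihilates constants. Once these are checked, the rest is formal.
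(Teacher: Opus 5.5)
Your proof is correct and takes the same route as the paper. The paper states this corollary without a separate proof, as an immediate consequence of Proposition 5.3.1, the identity $\DIV\circ\Hess=\Delta_{Ch^2(G)}$ together with its invertibility (Proposition 5.4.3), and the Helmholtz decomposition (Theorem 5.5). In that language $\gamma\circ a=\DIV$, so $a\circ c=\gamma^{-1}\circ\DIV\circ\Hess\circ\Delta_{Ch^2(G)}^{-1}\circ\gamma$ is the identity while $c\circ a=p_H$, which is exactly your computation.
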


\begin{remark}
\textit{The Laplacian operator $\Delta_{Ch^2G}=\DIV\Hess$ is closely related to the  canonical self-adjoint, second order differential operator $\Div\circ\,\Delta_{tG}\circ\nabla$. To see it, note that,
$$
\Div\Delta_{tG}\nabla = \left(\Div\Div_{tG}\right)\circ\left(\nabla_{tG}\nabla\right)
$$
and we have $\Div\Div_{tG} Z(j)=\sum_{i\in V_G}\DIV Z_i(j)$ and $\nabla_{tG}\nabla\phi(i)=\Hess\zeta(\phi)$ where,
$$
\zeta(\phi)_i(j)=\phi(j)\thinspace -\negthickspace\sum_{k\in B(i,2)}\phi(k).
$$
Therefore,
\begin{align*}
\Div\Delta_{tG}\nabla\phi(j) & = \Div\Div_{tG}\left(\Hess\zeta(\phi)\right)(j)\\
& = \sum_{i\in V_G}\DIV\left(\Hess\zeta(\phi)\right)_i(j)\\
& = \sum_{i\in V_G}\Delta_{Ch^2(G)}\zeta(\phi)_i(j).
\end{align*}
There is an explicit formula for $\Div\Delta_{tG}\nabla\phi$ and one can derive one for $\Delta_{Ch^2(G)}\,\zeta$ based on the formula for $\DIV Z$ in Proposition 5.3.2, but it isn't particularly simple or revealing.}
\end{remark}

\mpar
There are no simple formulas for $\Delta_{Ch^2G}$ or its inverse so understanding the projection $p_H$ seems a bit remote. However, the operator $\Hess\circ\DIV$ acting on $\mathcal{X}(T^2G)$ is a relatively concrete object and it suggests a way to approximate $p_H$ via gradient descent. It's easy to see that $\ker(\DIV)$ consists of rest points for the flow generated by $\Hess\DIV,$ so it's reasonable to think that the initial point $W$ of the flow moves toward $\Ker(\DIV)= \Ker(p_H) = \Image(1- p_H)$ in which case one expects,
$$
p_H W = W-\lim_{t\to\infty} e^{-t\Hess\DIV} W.
$$
The following result confirms this expectation.

\begin{theorem}
Consider the initial value problem,  
$$
\tfrac{d}{dt}Z_t=\Hess\DIV Z_t,\, t > 0,\quad Z_0=W,
$$

\mpar
in the vector space $\mathcal{X}(T^2(G)).$ Let,
$$
W=p_HW+(1-p_H)W
$$

be the Helmholtz decomposition of the initial value. Then,
\begin{align*}
Z_t & =e^{-t\Hess\DIV}\, W\\
&  = \Hess e^{-t\Delta_{Ch^2(G)}}\left(\Delta_{Ch^2(G)}^{-1}\DIV W\right)+ (1-p_H) W
\end{align*}

is the unique solution of the initial value problem and,

$$
||(1-p_H) W- Z_t||_{L^2(\mathcal{X}(T^2(G)))}\leq e^{-t\lambda_0}||W||_{L^2(\mathcal{X}(T^2(G)))},
$$

\mpar
where $\lambda_0 >0$ is the smallest eigenvalue of $\Delta_{Ch^2(G)}$.
\end{theorem}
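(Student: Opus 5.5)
The plan is to treat the equation as the linear gradient-descent flow generated by the operator $A=\Hess\circ\DIV$ on the finite dimensional space $\mathcal{X}(T^2(G))$. I read the equation as $\tfrac{d}{dt}Z_t=-\Hess\DIV Z_t$, which is the sign consistent with the stated solution $e^{-t\Hess\DIV}W$. By Proposition 5.4.2, $\Hess$ and $\DIV$ are mutually adjoint, so $A$ is self-adjoint and non-negative, since $\langle AZ,Z\rangle=\|\DIV Z\|^2$. Existence and uniqueness are then standard for a linear constant-coefficient ODE in finite dimensions, and the unique solution is $Z_t=e^{-tA}W$. The rest of the argument evaluates this exponential along the Helmholtz decomposition of Theorem 5.5.

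First I split $W=p_HW+(1-p_H)W$. The component $(1-p_H)W$ lies in $\Ker(\DIV)$, so $A$ annihilates it and $e^{-tA}(1-p_H)W=(1-p_H)W$ for all $t$. For the other component I write $p_HW=\Hess\zeta$ with $\zeta=\Delta_{Ch^2(G)}^{-1}\DIV W$. The key identity is the intertwining relation
$$
A\circ\Hess=\Hess\circ\DIV\circ\Hess=\Hess\circ\Delta_{Ch^2(G)}.
$$
Iterating it gives $A^n\Hess=\Hess\,\Delta_{Ch^2(G)}^n$, and summing the exponential series gives $e^{-tA}\Hess\zeta=\Hess\, e^{-t\Delta_{Ch^2(G)}}\zeta$. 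Adding the two pieces yields the stated formula for $Z_t$.

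For the estimate, note that $(1-p_H)W-Z_t=-e^{-tA}p_HW$, and $e^{-tA}p_HW$ stays in $\Image(\Hess)$. Since $A$ is self-adjoint and preserves $\Image(\Hess)$, its restriction there is self-adjoint. I will show that every eigenvalue $\mu$ of this restriction is an eigenvalue of $\Delta_{Ch^2(G)}$. If $AY=\mu Y$ with $Y=\Hess\xi\neq0$, the intertwining relation gives $\Hess(\Delta_{Ch^2(G)}\xi-\mu\xi)=0$. Injectivity of $\Hess$ (Proposition 5.4.1) then forces $\Delta_{Ch^2(G)}\xi=\mu\xi$, and $\xi\neq0$ because $Y\neq0$, so $\mu\geq\lambda_0>0$. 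Here $\lambda_0>0$ by Proposition 5.4.3. The spectral theorem on $\Image(\Hess)$ then gives $\|e^{-tA}p_HW\|\leq e^{-t\lambda_0}\|p_HW\|$. Finally $\|p_HW\|\leq\|W\|$ because $p_H$ is an orthogonal projection by Theorem 5.5, which gives the stated bound.

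The main obstacle is this spectral comparison on $\Image(\Hess)$: one has to transfer the lower bound $\lambda_0$ from $\Delta_{Ch^2(G)}$ on $\mathcal{X}(Ch^2(G))$ to $A$ on $\Image(\Hess)$ in the $\mathcal{X}(T^2(G))$ norm. The eigenvector argument with injectivity of $\Hess$ handles it. Using the intertwining formula directly would instead require controlling the operator norm of $\Hess$ relative to $\Delta_{Ch^2(G)}^{1/2}$, which the eigenvector argument avoids.
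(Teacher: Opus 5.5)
Your proof is correct and follows the paper's argument: exponentiate, split $W$ along the Helmholtz decomposition, intertwine via $(\Hess\DIV)\circ\Hess=\Hess\circ\Delta_{Ch^2(G)}$, and conclude with $\lambda_0$ and $\|p_HW\|\le\|W\|$ (you are also right to read the equation with a minus sign, which is how the paper's proof treats it). The only variation is in the estimate: you pull eigenvectors of $\Hess\DIV$ restricted to $\Image(\Hess)$ back through the injective $\Hess$, whereas the paper pushes an orthonormal eigenbasis $\zeta_m$ of $\Delta_{Ch^2(G)}$ forward, implicitly using $\langle\Hess\zeta_m,\Hess\zeta_n\rangle=\mu_n\delta_{mn}$, so the ``obstacle'' you mention is really just the exact identity $\|\Hess\zeta\|^2=\langle\zeta,\Delta_{Ch^2(G)}\zeta\rangle$.
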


\begin{proof}
We're considering a linear ordinary differential equation on a finite dimensional inner product space so it can solved by exponentiation,
$$
Z_t =\sum_{n=0}^{\infty}\frac{(-tB)^n}{n!} W,
$$
where $B=\Hess\DIV$. If $\DIV W=0$ then, clearly, $Z_t\equiv W.$ On the other hand, let's show that for general initial values $W,$

$$
e^{-t\Hess\DIV}\, p_H W=\Hess e^{-t\Delta_{Ch^2(G)}}\left(\Delta_{Ch^2(G)}^{-1}\DIV W\right),
$$

\mpar
that is to say, the flow of $\Hess\DIV$ acting on $p_H W$ is the Hessian of the flow of $\Delta_{Ch^2(G)}$  acting on $\Delta_{Ch^2(G)}^{-1}\DIV W.$ To see it, note the equation is true for $t=0$ since it reduces to the definition of $p_H.$ Next, observe that if $Z_t$ is a solution in $\mathcal{X}(T^2(G))$ and $\zeta_t=\DIV Z_t$ then,

$$
\tfrac{d}{dt}\zeta_t = \DIV\tfrac{d}{dt}Z_t = -\DIV\Hess\DIV Z_t =-\Delta_{Ch^2(G)}\DIV Z_t = -\Delta_{Ch^2(G)}\,\zeta_t.
$$

\mpar

On the other hand, suppose $\zeta_t$ is a solution in $\mathcal{X}(Ch^2(G))$ and $Z_t=\Hess\zeta_t$ then,

$$
\tfrac{d}{dt}Z_t = \Hess\tfrac{d}{dt}\zeta_t = -\Hess\Delta_{Ch^2(G)}\zeta= -\Hess\DIV\Hess\zeta_t = -\Hess\DIV Z_t.
$$

\mpar
Let $W= p_H W + (1-p_H)W$ be the Helmholtz decomposition of the initial value. Then $\DIV(1-p_H)W=0$ and $p_H W=\Hess\eta$ for $\eta=\Delta_{Ch^2(G)}^{-1} \DIV W$ and it follows that,

$$
e^{-t\Hess\DIV} W=\Hess e^{-t\Delta_{Ch^2(G)}}\left(\Delta_{Ch^2(G)}^{-1}\DIV W\right)+ (1-p_H) W.
$$

Now, $\Delta_{Ch^2(G)}$ is a non-negative, self-adjoint, and invertible operator hence it has distinct eigenvalues with multiplicity,
$$
0<\lambda_0 < \lambda_1 < \cdots < \lambda_N,
$$
for some $N$, and corresponding eigenspaces which span $\mathcal{X}(Ch^2G).$ It is well-known that each eigenspace has an orthonormal basis and that eigenvectors of a self-adjoint operator in distinct eigenspaces are mutually orthogonal. Let $M$ be the dimension of $\mathcal{X}(Ch^2(G))$ and let $(\mu_m, \zeta_m), 1\leq m\leq M$ be an enumeration of an orthonormal basis of eigenvectors $\zeta$ and their associated eigenvalues $\mu$. Observe that in this notation, for every $1\leq m\leq M,\,\mu_m=\lambda_l$ for some $1\leq l\leq N.$

\mpar
Let $\eta=\Delta_{Ch^2(G)}^{-1}\DIV W$ and let $\eta=\sum_{m=1}^M c_m\zeta_m$ be its orthonormal expansion. Then,
\begin{align*}
||(1-p_H)W-Z_t||_2^2 & = ||\Hess e^{-t\Delta_{Ch^2(G)}}\left(\Delta_{Ch^2(G)}^{-1}\DIV W\right)||^2_2\\
& = \langle \Hess e^{-t\Delta_{Ch^2(G)}}\eta, \Hess e^{-t\Delta_{Ch^2(G)}}\eta\rangle\\
& = \sum_{m=1}^M c_m^2 e^{-2t\mu_m}\langle
\Hess\zeta_m, \Hess\zeta_m\rangle\\
& \leq e^{-2t\lambda_0}\sum_{m=1}^M \langle c_m\Hess\zeta_m, c_m\Hess \zeta_m\rangle\\
& = e^{-2t\lambda_0}||\Hess\eta||_2^2.
\end{align*}

But we have $\Hess\eta = p_H W $ and $||p_H W|| \leq ||W||$ which concludes the proof.
\end{proof}

\begin{remark}
\textit{Recall \cite{M2} the Helmholtz decomposition of vector fields on $G$ is implemented by the projection $p_{\nabla}\colon\mathcal{X}(G)\to \mathcal{X}(G),$ where $p_{\nabla}=\nabla\circ\Delta^{-1}\negthinspace\circ\Div.$ Using arguments similar to those above one can show,
$$
||(1-p_{\nabla})Y-X_t||_{L^2(\mathcal{X}(G))}\leq e^{-t\lambda_1}||Y||_{L^2(\mathcal{X}(G))},
$$ 
where 
$$
X_t=e^{-t\nabla\Div}Y=\nabla e^{-t\Delta}\left(\Delta^{-1}\Div Y\right) + (1-p_{\nabla})Y
$$
and $\lambda_1$ is the spectral gap of the Laplacian $\Delta.$}
\end{remark}

\section{Products of Vector Fields}
\mpar
We know that $DOp^2(G)$ is spanned by products of vector fields and therefore for every $X, Y\in\mathcal{X}(G)$ there exists $W\in \mathcal{X}(T^2(G))$ such that $XY=a(W).$ The next result gives a formula for one particular choice of $W.$ 

\mpar
Before stating the result it's helpful to recall the \textit{orientation} $\omega(\alpha)$ of a vertex $\alpha\in V_{t^2G}.$ If $\alpha = ij/jk, \,k\neq i$ is a forward translation then $\omega(\alpha)=+1,$ if $\alpha = ij/ki,\, k\neq j$ is a backward translation then $\omega(\alpha)=-1,$ and if $\alpha=ij/ji$ is a reflection then $\omega(\alpha)=0.$ Also recall the notation $\overline{u}=\sigma(u).$

\begin{proposition}
Let $S\colon\mathcal{X}(G)\to\mathcal{X}(G)$ be defined by the formula,
$$
SY(u)=Y(\overline{u})+Y(u) -\tfrac{1}{2}\left[\pi Y(\pi_+(u))+\pi Y(\pi(u))\right].
$$ For every $X,Y\in\mathcal{X}(G)$ define $X\negthinspace\circ\negthinspace Y\in\mathcal{X}(T^2G)$ by the rule,
$$ 
X\negthinspace\circ\negthinspace Y(\alpha) = 
\begin{cases}
X(\pi(\alpha))Y(\pi_+(\alpha)), & \omega(\alpha)=1,\\ X(\pi(\alpha))Y\left(\overline{\pi_+(\alpha)}\right), &\omega(\alpha)=-1\\ X(\pi(\alpha))SY(\pi(\alpha)), &\omega(\alpha)=0  
\end{cases}
$$

Then $a(X\negthinspace\circ\negthinspace Y)\phi(i)=X(Y\phi)(i)$ for all $\phi\in C(G).$
\end{proposition}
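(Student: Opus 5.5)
The plan is to verify the identity directly, one directed edge $u=ij$ at a time. Since both sides are linear in $X$, I will expand each side as $\sum_{\pi(u)=i}X(u)\,(\cdots)$ and compare the bracketed coefficients of each $X(ij)$.

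\emph{Left-hand side.} By the definition of the action of a vector field,
$$
X(Y\phi)(i)=\sum_{\pi(u)=i}X(u)\bigl[Y\phi(\pi_+(u))-Y\phi(i)\bigr].
$$
For $u=ij$ this bracket equals
$$
\sum_{k\sim j}Y(jk)\,d\phi(jk)-\sum_{k\sim i}Y(ik)\,d\phi(ik).
$$
I will split off the special terms $k=i$ in the first sum and $k=j$ in the second. Using $d\phi(ji)=-d\phi(ij)$, these two terms together give $-\bigl(Y(ji)+Y(ij)\bigr)\,d\phi(ij)$. What remains are the sums over $k\sim j$ with $k\neq i$ and over $k\sim i$ with $k\neq j$.

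\emph{Right-hand side.} The vertices $\alpha$ with $\pi^2(\alpha)=i$ and $\pi(\alpha)=ij$ come in three kinds.
\begin{itemize}
\item Forward translations $ij/jk$ with $k\neq i$. Here $d^2\phi=d\phi(jk)-d\phi(ij)$ and the coefficient is $X(ij)Y(jk)$.
\item Backward translations $ij/ki$ with $k\neq j$. Here $d^2\phi=d\phi(ki)-d\phi(ij)=-d\phi(ik)-d\phi(ij)$ and the coefficient is $X(ij)Y(\overline{ki})=X(ij)Y(ik)$.
\item The single reflection $ij/ji$. Here $d^2\phi=-2\,d\phi(ij)$ and the coefficient is $X(ij)\,SY(ij)$.
\end{itemize}
The translation terms reproduce exactly the non-special pieces $Y(jk)\,d\phi(jk)$ and $-Y(ik)\,d\phi(ik)$ of the left-hand side. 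They also leave behind a leftover multiple of $d\phi(ij)$, namely
$$
-\Bigl[\sum_{k\sim j,\,k\neq i}Y(jk)+\sum_{k\sim i,\,k\neq j}Y(ik)\Bigr]d\phi(ij)=-\bigl[(\pi Y(j)-Y(ji))+(\pi Y(i)-Y(ij))\bigr]\,d\phi(ij).
$$

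\emph{Matching the coefficients of $d\phi(ij)$.} It remains to check that the reflection term $-2\,SY(ij)\,d\phi(ij)$, added to this leftover, equals $-\bigl(Y(ij)+Y(ji)\bigr)\,d\phi(ij)$. Solving for $SY(ij)$ gives
$$
SY(ij)=Y(ij)+Y(ji)-\tfrac12\bigl[\pi Y(i)+\pi Y(j)\bigr],
$$
which is precisely the definition of $S$ with $\pi(u)=i$ and $\pi_+(u)=j$. Summing over $u$ then gives the claim.

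\emph{Main obstacle.} The delicate step is the bookkeeping of the degenerate cases $k=i$ and $k=j$. They must be excluded from the translation sums, since there $\omega(\alpha)=0$ rather than $\pm1$. The proof hinges on correctly identifying $\pi_+(\alpha)$ and $\overline{\pi_+(\alpha)}$ for translations, and on the sign convention $d\phi(ki)=-d\phi(ik)$. I will also check that triangles cause no double counting. If $\{i,j,k\}$ is a triangle, then $ij/jk$ and $ij/ki$ are distinct vertices of $t^2G$, so their contributions are accounted for separately, exactly as in the generic case.
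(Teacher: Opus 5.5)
Your proof is correct and takes essentially the same route as the paper. Both expand $X(Y\phi)(i)$ over directed edges $u$ based at $i$, rewrite the differences as double differences over forward translations, backward translations and the reflection $u\overline{u}$, and use $d^2\phi(u\overline{u})=-2\,d\phi(u)$ to absorb the leftover first-order terms into the coefficient $SY(u)$. The only difference is bookkeeping: the paper first converts every term, including the degenerate ones $v=\overline{u}$ and $w=u$, into double differences and then isolates the reflection, whereas you split off the degenerate terms before matching coefficients.
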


\begin{proof}
We have,
\begin{align*}
X(Y\phi)(i ) & = \sum_{\pi(u)=i} \negthickspace X(u)dY\phi(u)\\
& = \sum_{\pi(u)=i}\negthickspace X(u)\left(Y\phi(\pi_+(u))-Y\phi(i)\right)\\ 
& = \sum_{\pi(u)=i} \,\,\sum_{\pi(v)=\pi_+(u)} \negthickspace \negthickspace X(u)Y(v) d\phi(v) - \sum_{\pi(u)=i}\sum_{\pi(w)=i} \negthickspace X(u)Y(w) d\phi(w)\\ 
& = I+II.  
\end{align*}

In order to evaluate these terms it's helpful first to recall some facts about vertices in $t^2G.$ 
\begin{itemize}
\item if $u, v\in V_{tG}$ and $\pi_+(u)=\pi(v)$ then $\alpha= uv\in V_{t^2G}$
\item if $u,w\in V_{tG}$ and $\pi(u)=\pi(w)$ then $\alpha = u\overline{w}\in V_{t^2G}$ 
\item in the first case $d^2\phi(uv)=d\phi(v)-d\phi(u)$
\item in the second case  $d^2\phi(u\overline{w})= d\phi(\overline{w})-d\phi(u) = -(d\phi(w)+d\phi(u)).$ 
\end{itemize}

With this notation in hand we have,
\begin{align*} 
I & = \sum_{\pi(u)=i}\,\,\sum_{\pi(v)=\pi_+(u)} \negthickspace \negthickspace X(u)Y(v)[d\phi(v)\pm d\phi(u)]\\ 
& = \sum_{\pi(u)=i}\sum_{\pi(v)=\pi_+(u)}\negthickspace \negthickspace X(u)Y(v) d^2\phi(uv) + \sum_{\pi(u)=i} \negthickspace X(u)\pi Y(\pi_+(u)) d\phi(u),  
\end{align*}
and similarly,
\begin{align*}
II & = -\sum_{\pi(u)=i}\,\sum_{\pi(w)=i} \negthickspace X(u)Y(w)[d\phi(w)\pm d\phi(u)]\\ 
& = \sum_{\pi(u)=i}\,\sum_{\pi(v)=\pi_+(u)}\negthickspace \negthickspace X(u)Y(w) d^2\phi(u\overline{w}) + \sum_{\pi(u)=i} \negthickspace X(u)\pi Y(i)) d\phi(u).
\end{align*}

Recalling the fact that $d^2\phi(u\overline{u})=-2d\phi(u),$ we have,
\begin{align*}
I+II & \,= \negthinspace\sum_{\pi(u)=i}\,\,\sum_{\substack{\pi(v)=\pi_+(u)\\v\neq\overline{u}}}\negthickspace\negthickspace X(u)Y(v)d^2\phi(uv)  +  \sum_{\pi(u)=i}\,\,\sum_{\substack{\pi(w)=i\\w\neq u}}\negthickspace X(u)Y(\overline{w})d^2\phi(u\overline{w})\\ 
& \phantom{mmmm} +\sum_{\pi(u)=i} \negthickspace X(u)[Y(\overline{u}) +Y(u) -\tfrac{1}{2}\pi Y(\pi_+(u)) -\tfrac{1}{2}\pi Y(i)]d^2\phi(u\overline{u})\\ 
& = \sum_{\pi^2(\alpha)=i}\negthickspace X\negthinspace\circ\negthinspace Y(\alpha)d^2\phi(\alpha), 
\end{align*} and this completes the proof.
\end{proof}

This is a compact and appealing formula, although the addition of any element of $\Ker(a)$ yields an equivalent formula. Still, it has value as it allows us to draw an important conclusion about commutators of vector fields.
\mpar
\begin{corollary}
Let $X,Y\in\mathcal{X}(G).$ Then $[X, Y]\in\mathcal{X}(G)$ if and only if,
$$
\lozenge \left(X\negthinspace\circ\negthinspace Y\right)_i(j)=\lozenge \left(Y\negthinspace\circ\negthinspace X\right)_i(j),
$$
for every $i,j\in V_G$ such that $d(i,j)=2.$ Specifically, $[X,Y]$ is a vector field if and only if for all $d(i,j)=2,$
$$
\sum_{k\in V_G}A(i,k)A(k,j)X(ik)Y(kj) =\sum_{k\in V_G}A(i,k)A(k,j)Y(ik)X(kj)
$$
where $A$ is the adjacency matrix of $G.$
\end{corollary}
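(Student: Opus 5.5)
By Proposition 6.1 we have $XY=a(X\circ Y)$ and $YX=a(Y\circ X)$. Since $a$ is linear, this gives
$$
[X,Y]=a(X\circ Y-Y\circ X).
$$
Set $Z=X\circ Y-Y\circ X\in\mathcal{X}(T^2G)$. The question of whether $[X,Y]$ is a vector field is then exactly the question of whether $a(Z)\in DOp^1(G)$. Proposition 5.3.3 answers this: it holds if and only if $\lozenge Z_i(j)=0$ for all $d(i,j)=2$. The flux $\lozenge$ is linear in $Z$, because it is a sum of values of $Z$ over a fixed index set. So the condition is equivalent to $\lozenge(X\circ Y)_i(j)=\lozenge(Y\circ X)_i(j)$ for every pair with $d(i,j)=2$, which is the first assertion.

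For the explicit form, I would evaluate $\lozenge(X\circ Y)_i(j)$ when $d(i,j)=2$. By Remark 5.2.3, the sum defining $\lozenge Z_i(j)$ runs over those $\alpha$ with $\pi^2(\alpha)=i$ and $\pi^2_+(\alpha)=j$. These are exactly the vertices $\alpha=ik/kj$, where $k$ is a common neighbor of $i$ and $j$. The point to check is that every such $\alpha$ is a forward translation, i.e. $\omega(\alpha)=+1$.

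\begin{itemize}
\item A backward translation $ij'/k'i$ has $\pi^2_+(\alpha)=\pi_+(k'i)=i$.
\item A reflection $ij'/j'i$ also has $\pi^2_+(\alpha)=i$.
\end{itemize}

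Neither can end at $j\neq i$. Also $ik/kj$ cannot be a reflection, since $j\neq i$.

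On forward translations, the definition in Proposition 6.1 gives $X\circ Y(ik/kj)=X(\pi(\alpha))Y(\pi_+(\alpha))=X(ik)Y(kj)$. Summing over common neighbors $k$, and using $A(i,k)A(k,j)$ as the indicator that $k$ is one, yields
$$
\lozenge(X\circ Y)_i(j)=\sum_{k\in V_G}A(i,k)A(k,j)X(ik)Y(kj).
$$
The formula for $Y\circ X$ is the same with $X$ and $Y$ interchanged, which gives the second assertion.

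The only step that needs real care is the enumeration of the $\alpha$ entering the flux, in particular ruling out backward translations and reflections. The rest is a direct application of Proposition 5.3.3 and linearity.
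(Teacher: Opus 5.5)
Your proposal is correct and follows the paper's own argument: you write $[X,Y]=a(X\circ Y-Y\circ X)$ via Proposition 6.1 and apply the criterion of Proposition 5.3(3), which rests on $\DIV Z_i(j)=\lozenge Z_i(j)$ when $d(i,j)=2$. Your check that only the forward translations $ik/kj$ contribute to the flux, giving $\lozenge(X\circ Y)_i(j)=\sum_k A(i,k)A(k,j)X(ik)Y(kj)$, fills in a step the paper leaves implicit.
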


\begin{proof}
This is a straightforward consequence of the fact that the difference of the two terms above equals $\DIV\left(X\negthinspace\circ\negthinspace Y-Y\negthinspace\circ\negthinspace X\right)(j)$ when $d(i,j)=2.$ Thus, by Proposition 5.3.4, $[X,Y]=a(X\negthinspace\circ\negthinspace Y-Y\negthinspace\circ\negthinspace X)\in DOp^1(G)\cong\mathcal{X}(G).$
\end{proof}

\mpar
It's obvious that $X^*X$ is a self-adjoint, second order differential operator. This suggests calculating the adjoint of the product of two vector fields. Before doing so, it's helpful to recall that the adjoint of a vector field is given by the formula,
$$
\langle X\phi,\psi\rangle_{C(G)}=\langle \phi, \overline{X}\psi+ \psi\,\Div X\rangle_{C(G)}.
$$
Here've we've used the notation $\overline{X}=\sigma X$ to denote the vector field with coefficients $\overline{X}(ij)=X(\sigma(ij))=X(ji)$. Letting $m(\phi)$ stand for the multiplication operator $m(\phi)\psi(i)=\phi(i)\psi(i)$ we can write the adjoint of a vector field as,
$$
X^*=\overline{X}+m(\Div X).
$$
\begin{proposition}
Let $X,Y\in\mathcal{X}(G).$ Then,
$$
(XY)^* = \overline{Y}\,\overline{X} +\overline{Y}m(\Div X) +m(\Div Y)\overline{X} +m(\Div X\Div Y).
$$
Therefore, $(XY)^*\in DOp^2(G)$ if and only if $Y^*\Div X= 0.$
\end{proposition}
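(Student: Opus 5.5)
The plan is to compute the adjoint formally from the adjoint formula for a single vector field, and then test the resulting operator against the two defining conditions of $DOp^2(G)$. Since adjoints reverse products, $(XY)^*=Y^*X^*$. Using the formula $X^*=\overline{X}+m(\Div X)$ recalled just before the statement (and the same formula for $Y$), I would write
$$
(XY)^*=\bigl(\overline{Y}+m(\Div Y)\bigr)\bigl(\overline{X}+m(\Div X)\bigr)
$$
and expand into four terms. The only point needing comment is that multiplication operators compose pointwise, so $m(\Div Y)m(\Div X)=m(\Div X\Div Y)$. This gives exactly the four-term formula in the statement, and the first claim is then essentially an algebraic identity.

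For the second claim I would check the two defining conditions of $DOp^2(G)$ separately.

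\emph{Locality.} Each term is local of order at most two. The term $\overline{Y}\,\overline{X}$ is a product of vector fields, so it lies in $DOp^2(G)$ by Proposition 3.3. The terms $\overline{Y}m(\Div X)$ and $m(\Div Y)\overline{X}$ are compositions of a first order operator with a multiplication operator, so their matrix entries vanish unless $d(i,j)\le 1$. The term $m(\Div X\Div Y)$ is diagonal. Hence $(XY)^*(i,j)=0$ whenever $d(i,j)>2$, for every choice of $X,Y$. Thus $(XY)^*$ always lies in the space $Op^2(G)\cong\mathcal{X}(Lf^2(G))$ of Remark 3.2, and membership in $DOp^2(G)$ reduces to annihilating constants. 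Since $G$ is connected, this means $(XY)^*1=0$.

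\emph{Annihilating constants.} I would evaluate each term on the constant function $1$:
\begin{itemize}
\item $\overline{X}1=0$, because $\overline{X}$ is a vector field; hence $\overline{Y}\,\overline{X}1=0$ and $m(\Div Y)\overline{X}1=0$.
\item $m(\Div X)1=\Div X$, so $\overline{Y}m(\Div X)1=\overline{Y}\Div X$.
\item $m(\Div X\Div Y)1=\Div X\Div Y$.
\end{itemize}
Summing,
$$
(XY)^*1=\overline{Y}\Div X+(\Div Y)\Div X=\bigl(\overline{Y}+m(\Div Y)\bigr)\Div X=Y^*\Div X.
$$
Therefore $(XY)^*\in DOp^2(G)$ if and only if $Y^*\Div X=0$.

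There is no real obstacle here. The only step requiring care is the bookkeeping of operator order: $\overline{Y}m(\Div X)$ means multiply by $\Div X$ first and then apply $\overline{Y}$, whereas $m(\Div Y)\overline{X}$ applies $\overline{X}$ first. Getting this order right is what makes the two surviving terms recombine into $Y^*$ applied to $\Div X$.
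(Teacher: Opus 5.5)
Your proposal is correct and follows the paper's proof: expand $(XY)^*=Y^*X^*$ using $X^*=\overline{X}+m(\Div X)$, observe that the result is second order, and reduce membership in $DOp^2(G)$ to the condition $(XY)^*1=\overline{Y}\Div X+\Div X\Div Y=Y^*\Div X=0$. Your term-by-term locality check simply spells out what the paper dismisses as ``clear.''
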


\begin{proof}
The proof of is a straightforward calculation based on the adjoint formula. It's clear that $(XY)^*$ is a second order operator so it is a differential operator if and only if $(XY)^*1= 0.$ But,
$$
(XY)^*1 = \overline{Y}\Div X +\Div X\Div Y = Y^*\Div X,
$$
which concludes the proof.
\end{proof}

\section{Operators Generalizing $\Div\circ\,\Delta_{tG}\circ\nabla$}

Observe that  $\Div\circ\,\Delta_{tG}\circ\nabla$ is an example of a wider class of operators where the vector field $\Delta_{tG}$ on $tG$ is replaced by a general vector field $Z.$  This suggests introducing the map,
$$
b\colon\mathcal{X}(tG)\to DOp^2(G)
$$ 
by the rule $b(Z)=\Div\circ\, Z\circ\nabla.$ Because $\mathcal{X}(tG)$ is canonically isomorphic to $\mathcal{X}(T^2G),$ we will think of $b$ as a map from second order vector fields to second order differential operators,
$$
b\colon\mathcal{X}(T^2G)\to DOp^2(G)
$$ 
Now, $a$ is a surjective map so for every $Z\in\mathcal{X}(T^2G)$ there exists $W\in\mathcal{X}(T^2G)$ such that $b(Z)=a(W).$ The following proposition presents formulas for $W$ and the adjoint of $b(Z)$ and derives some consequences from them.

\mpar
Before coming to the proposition let's review some notation. Recall that  $\sigma\colon tG\to tG$ is the map $\sigma(ij) = ji$ and $d\sigma\colon t^2G\to t^2 G$ is the map $d\sigma(ij/kl)= \sigma(ij)\sigma(kl).$ They act on the appropriate vector fields by the rules,
$$
\sigma X(u)=X(\sigma(u)) =X(\overline{u})=\overline{X}(u)
$$
and $d\sigma Z(\alpha)= Z(d\sigma(\alpha)).$ Finally, we use the notation $X\colon\negthinspace Y$ for the pointwise product $X\colon\negthinspace Y(u)= X(u) Y(u).$

\begin{proposition}
Let $b\colon\mathcal{X}(T^2G))\to DOp^2(G)$ be the operator, 
$$
b(Z)=\Div\circ Z\circ\nabla.
$$ 
Then,

\mpar
1. \quad $b(Z) = -a(Z+d\sigma Z),$

\mpar
2. \quad $|\Ker (b)| \geq |E_{tG}|$ and $|\Image(b)|\leq |E_{tG}|,$

\mpar
3. \quad $b(Z)^* = b(\sigma Z) - \left(\Div_{tG}Z +\sigma\Div_{tG}Z\right).$  

\mpar

\end{proposition}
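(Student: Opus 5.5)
The plan is to prove the three items in order, each by direct calculation from the definitions in Section 2. Item 1 reduces to a reindexing by the involution $d\sigma$. Item 2 follows from item 1 by linear algebra. Item 3 follows from the vector field adjoint formula of Section 6, applied on $tG$.

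\emph{Item 1.} Fix $\phi$ and put $X=\nabla\phi$. Since $Z$ acts on $X\in C(tG)$ as a vector field on $tG$, we have $ZX(u)=\sum_{\pi(\alpha)=u}Z(\alpha)\,dX(\alpha)$. Moreover $dX(\alpha)=d\phi(\pi_+(\alpha))-d\phi(\pi(\alpha))=d^2\phi(\alpha)$, as computed in Example 2.3. Hence
$$
b(Z)\phi(i)=\sum_{\pi_+(u)=i}ZX(u)-\sum_{\pi(u)=i}ZX(u).
$$
The second sum is $\sum_{\pi^2(\alpha)=i}Z(\alpha)d^2\phi(\alpha)=a(Z)\phi(i)$, which produces the term $-a(Z)\phi(i)$.

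The first sum runs over $\alpha=uv$ with $\pi_+(\pi(\alpha))=i$. I would reindex it by $\beta=d\sigma(\alpha)=\bar u\bar v$, using two facts:
\begin{itemize}
\item $d\sigma$ is an involution of $V_{t^2G}$ that carries $\{\alpha\mid \pi_+\circ\pi(\alpha)=i\}$ bijectively onto $\{\beta\mid\pi^2(\beta)=i\}$;
\item $d^2\phi(d\sigma\alpha)=d\phi(\bar v)-d\phi(\bar u)=-d^2\phi(\alpha)$.
\end{itemize}
Then the first sum equals $-\sum_{\pi^2(\beta)=i}Z(d\sigma\beta)\,d^2\phi(\beta)=-a(d\sigma Z)\phi(i)$. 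Adding the two contributions gives $b(Z)=-a(Z+d\sigma Z)$.

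\emph{Item 2.} By item 1, $b=-a\circ(1+d\sigma)$. Therefore $\Ker(b)\supseteq\Ker(1+d\sigma)$ and $\Image(b)\subseteq a(\Image(1+d\sigma))$.

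The involution $d\sigma(ij/kl)=ji/lk$ has no fixed points, so its $(-1)$-eigenspace and $(+1)$-eigenspace each have dimension $\tfrac12|V_{t^2G}|=|E_{tG}|$. The $(-1)$-eigenspace is $\Ker(1+d\sigma)$ and the $(+1)$-eigenspace is $\Image(1+d\sigma)$. Both bounds in item 2 follow immediately.

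\emph{Item 3.} We have $b(Z)^*=\nabla^*\circ Z^*\circ\Div^*=\Div\circ Z^*\circ\nabla$, because $\nabla$ and $\Div$ are mutually adjoint. Since $tG$ is a finite simple graph, the adjoint formula $X^*=\overline{X}+m(\Div X)$ applies on $tG$ and gives $Z^*=\sigma Z+m(\Div_{tG}Z)$, where $\sigma=\sigma_{tG}$. The first piece contributes $\Div\circ\sigma Z\circ\nabla=b(\sigma Z)$.

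For the second piece, put $f=\Div_{tG}Z\in C(tG)$ and compute $\Div(f\colon\!\nabla\phi)(i)$. Substituting $u\mapsto\bar u$ in the sum over $\pi_+(u)=i$, and using $d\phi(\bar u)=-d\phi(u)$, gives
$$
\Div(f\colon\!\nabla\phi)(i)=-\sum_{\pi(u)=i}\bigl(f(u)+f(\bar u)\bigr)d\phi(u).
$$
So $\Div\circ m(f)\circ\nabla$ is the first order operator given by the vector field $-(f+\sigma f)$. This yields the formula in item 3.

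The main obstacle is bookkeeping rather than ideas. One must keep the two involutions $\sigma_{tG}$ and $d\sigma$ distinct. One must also verify carefully, including for reflections $ij/ji$ and for both translation types, that $d\sigma$ gives the claimed bijection in item 1 and reverses the sign of $d^2\phi$. I would check these two facts case by case on forward translations, backward translations and reflections before assembling the computation.
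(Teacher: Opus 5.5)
Your proposal is correct and follows the paper's proof essentially step for step. It uses the same reindexing of the $\pi_+$-sum by $d\sigma$ (with $d^2\phi\circ d\sigma=-d^2\phi$) in item 1, the same even/odd splitting under the fixed-point-free involution $d\sigma$ in item 2, and the same adjoint formula $Z^*=\sigma Z+m(\Div_{tG}Z)$ followed by the substitution $u\mapsto\bar u$ in item 3.
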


\begin{proof}
For item 1 note that,
\begin{align*}
b(Z)\phi(i) & = \Div(Z\nabla\phi)\\
&  = \sum_{\pi(u)=i} [Z\nabla\phi(\overline{u}) - Z\nabla\phi(u)]\\
& = \sum_{\pi(u)=i}\sum_{\pi(\alpha)=\overline{u}}Z(\alpha) d^2\phi(\alpha) \, - \negthickspace\sum_{\pi(u)=i}\sum_{\pi(\alpha)=u}Z(\alpha) d^2\phi(\alpha).
\end{align*}
Observe the second term above is just $-a(Z)\phi(i).$ To evaluate the first term, note that $d^2\phi(d\sigma(\alpha)) = -d^2\phi(\alpha)$ and $\pi(\alpha)=\overline{u}$ if and only if $\pi(d\sigma(\alpha))=u.$ By a change of variables we have,
\begin{align*}
\sum_{\pi(u)=i}\sum_{\pi(\alpha)=\overline{u}}Z(\alpha)d^2\phi(\alpha) & = \sum_{\pi(u)=i}\sum_{\pi(\beta)=u}Z(d\sigma(\beta))d^2\phi(\sigma(d\beta))\\
& = -\sum_{\pi(u)=i}\sum_{\pi(\beta)=u}d\sigma Z(\beta)d^2\phi(\beta)\\
& = -a(d\sigma Z)\phi(i),
\end{align*}
as required. 

\mpar
Item 2 turns on the easily checked fact that $d\sigma$ is a fixed point free involution of $V_{t^2G}.$ Therefore, $\mathcal{X}(T^2G)$ splits into even and odd subspaces with respect to $d\sigma,$ each of which has dimension $|E_{tG}|.$ Evidently, the odd subspace is contained in $\Ker(b)$ which shows $|\Ker(b)|\geq |E_{tG}|$ and therefore $|\Image(b)|\leq |E_{tG}|.$

\mpar
For item 3, recall that the adjoint of $Z$ is $Z^*=\sigma Z + m(\Div_{tG} Z),$ considered as a vector field on $tG.$ Thus,
\begin{align*}
b(Z)^*\phi(i) & = \Div\circ\, Z^*\circ\nabla\phi(i)\\
& = \Div(\sigma Z\,\nabla\phi)(i) + \Div(\Div_{tG}Z\colon\negthickspace\nabla\phi)(i)\\
& = I+II.
\end{align*}
Evidently, $I= b(\sigma Z)\phi(i).$ To evaluate the second term note that $d\phi(\overline{u})= -d\phi(u).$  We have,
\begin{align*}
II & = \sum_{\pi(u)=i}[\Div_{tG}Z(\overline{u})d\phi(\overline{u}) - \Div_{tG} Z(u) d\phi(u)]\\
& = -\sum_{\pi(u)=i}[\Div_{tG}Z(\overline{u}) + \Div_{tG} Z(u)] d\phi(u)\\
& = -\sigma\Div_{tG}Z\phi(i) - \Div_{tG}Z\phi(i),
\end{align*}
which completes the proof.
\end{proof}

\begin{remark}
\textit{1. The adjoint formula says that $b(Z)^*$ and $b(\sigma Z)$ differ by twice the even part of the vector field $\Div_{tG}Z.$}

\mpar
\textit{2. It follows from the definitions that if $Z$ is self-adjoint or skew-adjoint as a vector field on $tG$ then $b(Z)$ is self adjoint or skew-adjoint as an operator in $DOp^2(G).$}

\mpar
\textit{3. We know that $\sigma Z = Z$ is a necessary and sufficient condition for $Z$ to be a self-adjoint vector field, hence $\sigma Z=Z$ implies $b(Z)^*=b(Z).$ (Note that the adjoint formula is consistent with the fact that if $Z$ is even then $\Div_{tG}=0$).}

\mpar
\textit{4. It's known that necessary and sufficient conditions that $Z$ is skew-adjoint are $\sigma Z=-Z$ and $\Div_{tG}Z=0.$ Note that $\Div_{tG}\sigma Z=-\Div_{tG} Z$ for any vector field, so the antisymmetry of $Z$ imposes no condition on $\Div_{tG}Z.$  Therefore, a slightly more general sufficient condition that $b(Z)$ is skew-adjoint is that $\sigma Z=-Z$ and $\Div_{tG}Z$ is odd, that is, $\sigma\Div Z=-\Div Z.$}
\end{remark}

\section{Adjoints}
Our final result is a formula for $a(Z)^*,$ the adjoint of the operator $a(Z).$ It's significantly more complicated than the formula for $b(Z)^*.$ The formula involves a complimentary notion of gradient of a vector field $X,$ called the acclivity, and it's adjoint, called the dispersion. Before going further, however, it's useful to look at $a(Z)^*$ expressed in terms of the transpose of $\DIV Z_i(j).$

\begin{lemma}
1. Let $\zeta\in\mathcal{X}(Lf^2G)$ and define its \textit{transpose} $\zeta^{\top}\in\mathcal{X}(Lf^2G)$ by the rule $\zeta^{\top}_i(j) = \zeta_j(i).$ Then,
$$
a(Z)^*\phi(i)=\sum_{j\in V_G}(\DIV Z)^{\top}_i(j)\phi(j).
$$

2. $a(Z)$ is self adjoint if and only if,
\begin{align*}
\lozenge Z_i(j) & =\lozenge Z_j(i), \quad d((i,j)=2\\
\lozenge Z_i(j)-\lozenge Z_i(j)  & = (\pi + d\pi)Z(ij) -(\pi + d\pi)Z(ji),\quad d(i,j)=1.
\end{align*}
3. We have,
$$
\sum_{j\in V_G}\DIV Z_i(j) = 0 \quad\text{and}\quad\sum_{i\in V_G}\DIV Z_i(j) =\Div\Div_{tG} Z(j).
$$

4. The adjoint can be expressed as $a(Z)^* = \Div\circ \, Z^*\circ \pi^*$ where $\pi^*\colon C(G)\to\mathcal{X}(G)$ is the operator $\pi^*\phi = \phi\circ\pi$ and $Z^*=\sigma Z+m(\Div_{tG} Z)$ is the adjoint of $Z,$ thought of as a vector field on $tG$ or, equivalently, as a first order operator on $\mathcal{X}(G).$

\mpar
\mpar

\end{lemma}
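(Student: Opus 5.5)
The lemma has four parts, and all of them rest on Proposition 5.3, which represents $a(Z)$ as a matrix in the standard basis: $a(Z)(i,j)=\DIV Z_i(j)$. I will prove the items in the order 1, 3, 2, 4, because items 2 and 4 reuse items 1 and 3.

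\emph{Item 1.} The adjoint of an operator with matrix $L(i,j)$ with respect to $\langle\cdot,\cdot\rangle_{C(G)}$ has matrix $L(j,i)$. Concretely, write $\langle a(Z)\phi,\psi\rangle_{C(G)}=\sum_i\sum_j \DIV Z_i(j)\phi(j)\psi(i)$ and exchange the roles of $i$ and $j$. This gives $a(Z)^*\psi(i)=\sum_j \DIV Z_j(i)\psi(j)=\sum_j(\DIV Z)^{\top}_i(j)\psi(j)$.

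\emph{Item 3.} The first identity is immediate from the definition: $\DIV Z$ is a section of $Ch^2(G)$, so its row sums vanish. Equivalently, $a(Z)1=0$ because $d^2$ of a constant is zero. For the column sums, apply linearity of $\Div\Div_{tG}$ to $Z=\sum_i Z_i$, which holds because $\sum_i e_i\circ\pi^2\equiv 1$. Then $\sum_i\DIV Z_i(j)=\Div\Div_{tG}(\sum_i Z_i)(j)=\Div\Div_{tG}Z(j)$.

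\emph{Item 2.} Self-adjointness means the matrix is symmetric, i.e.\ $\DIV Z_i(j)=\DIV Z_j(i)$ for all $i,j$. I will split into cases by $d(i,j)$ and substitute item 2 of Proposition 5.3 in each.
\begin{itemize}
\item If $i=j$, the condition is trivial.
\item If $d(i,j)=2$, both entries equal flux terms, so the condition reads $\lozenge Z_i(j)=\lozenge Z_j(i)$.
\item If $d(i,j)=1$, the condition reads $\lozenge Z_i(j)-(\pi+d\pi)Z(ij)=\lozenge Z_j(i)-(\pi+d\pi)Z(ji)$. Rearranged, this gives the second displayed condition, read as $\lozenge Z_i(j)-\lozenge Z_j(i)$; the printed left side is evidently a typo for this.
\end{itemize}
Pairs with $d(i,j)>2$ impose nothing, since both entries vanish.

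\emph{Item 4.} This item takes the most care, mainly in bookkeeping the identifications. I will rewrite $a(Z)$ as a composition and then take adjoints. From the proof of Proposition 5.3.1, $a(Z)\phi(i)=\sum_\alpha Z_i(\alpha)\,\nabla_{tG}\nabla\phi(\alpha)$. Reorganising this sum by $u=\pi(\alpha)$ gives $a(Z)=\pi\circ Z\circ\nabla$, where:
\begin{itemize}
\item $Z$ acts on $\mathcal{X}(G)=C(tG)$ as a first-order operator, $ZX(u)=\sum_{\pi(\alpha)=u}Z(\alpha)dX(\alpha)$;
\item $\pi\colon\mathcal{X}(G)\to C(G)$ is the pushforward $\pi X(i)=\sum_{\pi(u)=i}X(u)$.
\end{itemize}
Taking adjoints then gives $a(Z)^*=\nabla^*\circ Z^*\circ\pi^*=\Div\circ Z^*\circ\pi^*$, using three facts:
\begin{itemize}
\item $\nabla^*=\Div$;
\item $\pi^*\phi=\phi\circ\pi$, checked by $\langle \pi X,\phi\rangle=\sum_u X(u)\phi(\pi(u))$;
\item $Z^*=\sigma Z+m(\Div_{tG}Z)$, the adjoint formula for vector fields already used in Section 6 and Proposition 7.1, now applied on $tG$.
\end{itemize}

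The main obstacle is confirming the factorisation $a(Z)=\pi\circ Z\circ\nabla$ exactly. The points to check are that $\sum_{\pi(\alpha)=u}Z(\alpha)d(\nabla\phi)(\alpha)$ matches $Z(\alpha)d^2\phi(\alpha)$ term by term, and that summing over $\pi(u)=i$ reproduces the restriction $\pi^2(\alpha)=i$. Once that factorisation is checked, the remainder of item 4 is formal.
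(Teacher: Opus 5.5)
Your proposal is correct and follows essentially the same route as the paper. You use the matrix-transpose computation for item 1, the case-by-case comparison via Proposition 5.3.2 for item 2 (where you rightly read the left side as $\lozenge Z_i(j)-\lozenge Z_j(i)$), $a(Z)1=0$ for the row sums, and the factorisation $a(Z)=\pi\circ Z\circ\nabla$ followed by taking adjoints for item 4.

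The only deviation is the column-sum identity in item 3. You get it directly from linearity and $\sum_i Z_i=Z$, whereas the paper recognises the column sums as $a(Z)^*1$ and computes that via the adjointness of $\nabla_{tG}\nabla$ and $\Div\Div_{tG}$. Both are one-line arguments, and yours is, if anything, more direct.
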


\begin{proof}
Item 1 is a familiar fact of matrix theory stated in an unfamiliar setting, so we include a proof here for completeness's sake. We have,
\begin{align*}
\langle a(Z)\phi, \psi\rangle_{C(G)} & = \sum_{i\in V_G}\psi(i)\sum_{j\in V_G}\DIV Z_i(j)\phi(j)\\
& = \sum_{j\in V_G}\phi(j)\sum_{i\in V_G}\DIV Z)_i(j)\psi(i)\\
& = \sum_{j\in V_G}\phi(j) \sum_{i\in V_G}(\DIV Z)^{\top}_j(i)\psi(i)  = \langle\phi, a(Z)^*\psi\rangle_{C(G)}.
\end{align*}

Item 2 follows by setting the difference $\DIV Z_i(j)-\DIV Z_j(i)$ equal to zero and equating terms.

\mpar
The sums in item 3 are $a(Z)1$ and $a(Z)^* 1,$ respectively. But $a(Z)1=0$ and,
\begin{align*}
\langle a(Z)\phi, 1\rangle_{C(G)} & = \sum_{i\in V_G}\sum_{\pi^2(\alpha)=i} Z(\alpha) d^2\phi(\alpha)\\ & = \langle Z, \nabla_{tG}\nabla\phi\rangle_{\mathcal{X}(T^2(G))} = \langle\Div\Div_{tG} Z, \phi\rangle_{C(G)}.
\end{align*}
For item 4, we can write,
\begin{align*}
a(Z)\phi(i) & = \sum_{\pi^2(\alpha)=i}Z(\alpha)d^2\phi(\alpha)\\
& = \sum_{\pi(u)=i}\sum_{\pi(\alpha)=u}Z(\alpha)[d\phi(\pi_+(\alpha) - d\phi(u)]\\
& = \sum_{\pi(u)=i} Z\nabla\phi(u) = \pi\circ Z\circ\nabla\phi(i),
\end{align*}
where $\pi\colon\mathcal{X}(G)\to C(G)$ is defined by the formula $\pi X(i) = \sum_{\pi(u)=i} X(u).$ Note that we are conflating $Z$, thought of as a second order vector field on $G$, with $Z$, thought of as a vector field on $tG.$ Recall this doesn't lead to error because $\mathcal{X}(T^2G)$ is canonically isomorphic to $\mathcal{X}(tG).$ But it's worthwhile noting it because taking the adjoint of $a(Z)$ in $DOp^2(G)$ involves taking the adjoint of $Z$ in $DOp^1(tG)$, that is, its adjoint as first order differential operator on $C(tG)\cong \mathcal{X}(G).$ Specifically,
$$
a(Z)^* = (\pi\circ Z\circ\nabla)^* = \Div\circ \,Z^*\circ\pi^*.
$$
Here $\langle ZX, Y\rangle_{\mathcal{X}(G)}=\langle X, Z^*Y\rangle_{\mathcal{X}(G)}$ and $\langle\pi X, \phi\rangle_{C(G)}=\langle x, \pi^*\phi\rangle_{\mathcal{X}(G)}$ are the formal adjoint operators. We've already seen the formula for the adjoint of a vector field on a graph and it's an easy exercise to show that $\pi^*\phi = \phi\circ \pi$ 
\end{proof}

\mpar
The definitions of acclivity and dispersion use the morphism $d\pi_+\colon t^2G\to tG,$ which is given by the rule,
$$
d\pi_+(ij/kl)=\pi_+(ij)\pi_+(kl)=jl.
$$ 
Later on we'll need the morphism $\sigma d\sigma\colon t^2G\to tG$ where, 
$$
\sigma d\sigma(ij/kl)=\sigma(\sigma(ij)\sigma(kl))=\sigma(ji/lk)=lk/ji
$$
and its action on vector fields, namely $\sigma d\sigma Z(\alpha)= Z(\sigma d\sigma(\alpha)).$ The cardinal feature of $\sigma d\sigma$ is that it reverses the order of appearance of vertices of $G$ in the vertices of $t^2G.$

\begin{definition}
1. The \textit{acclivity} is the operator $\bigtriangledown_{tG}\colon\mathcal{X}(G)\to\mathcal{X}(tG)$ defined by the rule,
$$
\bigtriangledown_{tG}X(\alpha) = \delta X(\alpha) = X(d\pi_+(\alpha)) - X(d\pi(\alpha)).
$$

2. The \textit{dispersion} is the operator $\Dsp_{tG}\colon\mathcal{X}(tG)\to\mathcal{X}(G)$ defined by the rule,
\begin{align*}
\Dsp_{tG}Z(u) & = \sum_{d\pi_+(\beta)=u}\negthickspace\negthickspace Z(\beta)\thickspace - \negthickspace \sum_{d\pi(\alpha)=u}\negthickspace\negthickspace Z(\alpha)\\
& = d\pi_+ Z(u)-d\pi Z(u).
\end{align*}
\end{definition}

\begin{remark}
\textit{It's helpful to compare the acclivity with the gradient. Let $X$ be a vector field on $G$ and let $\alpha = ij/kl$ be a vertex in $V_{t^2G}.$ Then,
\begin{align*}
\nabla_{tG} X(\alpha) & = dX(ij/kl) = X(kl)-X(ij)\\
\bigtriangledown_{tG}X(\alpha) & =\delta X(ij/kl) = X(jl) - X(ik).
\end{align*}}
\end{remark}

\mpar
The following result collects some elementary facts about $\nabla_{tG}, \bigtriangledown_{tG}$ and their adjoints.
\begin{proposition}
1. The acclivity and dispersion are mutually adjoint in the sense that,
$$
\langle \bigtriangledown_{tG}X, Z\rangle_{\mathcal{X}(tG)} = \langle X,\Dsp_{tG}Z\rangle_{\mathcal{X}(G)}.
$$

\mpar
2. The acclivity and and gradient agree on gradient vector fields, meaning,
$$
\nabla_{tG}\nabla\phi = \bigtriangledown_{tG}\nabla\phi
$$
for all functions $\phi\in C(G).$

\mpar
3. Define operators on $\mathcal{X}(G)$ by the formulas,
\begin{align*}
\Delta^{-,-} & = \Div_{tG}\nabla_{tG} \quad\quad\Delta^{-,+} = \Div_{tG}\bigtriangledown_{tG}\\
\Delta^{+,-} & = \Dsp_{tG}\nabla_{tG} \quad\quad\Delta^{+,+} = \Dsp_{tG}\bigtriangledown_{tG}.
\end{align*}
Then $\Delta^{-,-}$ and $\Delta^{+,+}$ are self-adjoint and $\Delta^{-,+}$ and $\Delta^{+,-}$ are mutually adjoint. 

\mpar
4.  For all $Z\in\mathcal{X}(T^2G),$ we have the identities,
\begin{align*}
\Div_{tG}\sigma Z & = -\Div_{tG}Z, \quad\quad \Div_{tG} d\sigma Z  = \sigma \Div_{tG} Z,\\
\Dsp_{tG}\sigma Z & = \sigma\Dsp_{tG} Z, \quad\quad\,\Dsp_{tG}d\sigma Z  = -\Dsp_{tG} Z. 
\end{align*}

5. Let $\theta\colon V_{t^2G}\to V_{t^2G}$ be defined by the formula,
$$
\theta(\alpha) =
\begin{cases}
\alpha, & \text{if $\omega({\alpha})=0,1$}\\
\sigma d\sigma(\alpha), & \text{if $\omega(\alpha) = -1.$}
\end{cases}
$$
Then $\theta$ extends to an involution of $\mathcal{X}(tG)$ by the rule $\theta Z(\alpha)= Z(\theta(\alpha))$ and,
$$
\Dsp_{tG} Z=\Div_{tG}\theta Z, \quad \Div_{tG}Z = \Dsp_{tG}\theta Z.
$$
\end{proposition}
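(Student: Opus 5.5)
The plan is to reduce every item to one bookkeeping device. For each vertex $\alpha=ij/kl\in V_{t^2G}$ we record the four vertices of $tG$ attached to it:
$$
\pi(\alpha)=ij,\quad \pi_+(\alpha)=kl,\quad d\pi(\alpha)=ik,\quad d\pi_+(\alpha)=jl.
$$
We then track how $\sigma$, $d\sigma$ and $\theta$ permute these labels. Every operator in the statement is either a difference of two of these labels or a sum over a fibre of one of them. So each identity should follow from a change of variables along a bijection of $V_{t^2G}$, exactly as in the proof of Proposition 7.1.

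For item 1, write
$$
\langle\bigtriangledown_{tG}X,Z\rangle=\sum_{\alpha}X(d\pi_+(\alpha))Z(\alpha)-\sum_{\alpha}X(d\pi(\alpha))Z(\alpha),
$$
and group each sum by the fibres $d\pi_+(\alpha)=u$ and $d\pi(\alpha)=u$. This gives $\sum_u X(u)\,(d\pi_+Z(u)-d\pi Z(u))=\langle X,\Dsp_{tG}Z\rangle$.

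For item 2, take $\alpha=ij/kl$. Then
$$
\bigtriangledown_{tG}\nabla\phi(\alpha)=d\phi(jl)-d\phi(ik)=\phi(l)-\phi(j)-\phi(k)+\phi(i).
$$
This is the same as $d\phi(kl)-d\phi(ij)=d^2\phi(\alpha)$. Here it must be checked that $ik$ and $jl$ really lie in $V_{tG}$ (or are degenerate loops on which the difference vanishes). This is guaranteed because $d\pi$ and $d\pi_+$ are graph homomorphisms, and it should be verified separately on translations and reflections.

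Item 3 is then formal. $\Delta^{-,-}$ is a Laplacian, hence self-adjoint. Item 1 gives $(\Delta^{+,+})^*=\Dsp_{tG}\bigtriangledown_{tG}=\Delta^{+,+}$. The adjointness of $\nabla_{tG}$ and $\Div_{tG}$, together with item 1, gives $(\Delta^{-,+})^*=\Dsp_{tG}\nabla_{tG}=\Delta^{+,-}$.

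For item 4, use $\Div_{tG}=\pi_+-\pi$ (pushforwards along $\pi_{tG},\pi_{tG\,+}$) and $\Dsp_{tG}=d\pi_+-d\pi$. The action of each involution on the labels is as follows:
\begin{align*}
\sigma_{tG}&\colon\ (\pi,\pi_+,d\pi,d\pi_+)\mapsto(\pi_+,\pi,\sigma d\pi,\sigma d\pi_+),\\
d\sigma&\colon\ (\pi,\pi_+,d\pi,d\pi_+)\mapsto(\sigma\pi,\sigma\pi_+,d\pi_+,d\pi).
\end{align*}
For instance, $d\pi(kl/ij)=ki$ and $d\pi(ji/lk)=jl$. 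Reindexing each fibre sum by the relevant involution then yields the four sign and conjugation rules: a swap of two labels produces a minus sign, and an application of $\sigma$ to a label produces $\sigma$ outside.

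Item 5 is the step needing care, and I expect it to be the main obstacle, because of the case analysis. The key claim is
$$
d\pi(\alpha)=\pi(\theta\alpha),\qquad d\pi_+(\alpha)=\pi_+(\theta\alpha)\qquad\text{for all }\alpha.
$$
For a forward translation $ij/jk$ or a reflection $ij/ji$ this is immediate, since there $d\pi=\pi$ and $d\pi_+=\pi_+$. For a backward translation $\alpha=ij/ki$ we have $\theta\alpha=\sigma d\sigma(\alpha)=ik/ji$. Its base is $ik=d\pi(\alpha)$ and its end is $ji=d\pi_+(\alpha)$. Moreover $ik/ji$ is again a backward translation (first vertex $i$ equals last vertex $i$, and $j\ne k$). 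Since $\sigma d\sigma$ is an involution, $\theta$ is an involution of $V_{t^2G}$, hence a bijection. Reindexing the fibre sums by $\theta$ gives $\Dsp_{tG}Z=\Div_{tG}\theta Z$, and applying this to $\theta Z$ gives the second identity because $\theta^2=1$. The main thing to double-check is that $\theta$ preserves the orientation classes, so that it is well defined as an involution, including the degenerate case in which a backward translation is also a reflection.
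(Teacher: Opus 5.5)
Your proposal is correct and follows essentially the paper's route. Item 1 is adjointness of $d\pi^*,d\pi_+^*$, item 2 is direct expansion at $\alpha=ij/kl$, and item 3 follows formally. Item 4 reindexes fibre sums along $\sigma$ and $d\sigma$, and item 5 rests on the case-checked identities $d\pi(\alpha)=\pi(\theta(\alpha))$ and $d\pi_+(\alpha)=\pi_+(\theta(\alpha))$. Your label table is a tidier packaging of the paper's fibre correspondences; it correctly gives $d\pi(d\sigma(\alpha))=d\pi_+(\alpha)$, where the paper's first displayed equivalence chain has a stray $\overline{u}$. Your worry about a backward translation that is also a reflection is moot, since reflections have $\omega=0$ and are in any case fixed by $\sigma d\sigma$.
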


\begin{proof}
By straightforward calculation,
\begin{align*}
\langle \bigtriangledown_{tG} X, Z\rangle_{\mathcal{X}(tG)} & = \sum_{\alpha\in V_{t^2G}}\negthickspace\delta X(\alpha) Z(\alpha)\\
& = \sum_{\alpha\in V_{t^2G}}\negthickspace [X(d\pi_+(\alpha)) - X(d\pi(\alpha))] Z(\alpha)\\
& =\langle d\pi_+^* X-d\pi^* X, Z\rangle_{\mathcal{X}(t^2G)}\\
& =\langle X, d\pi_+ Z-d\pi Z\rangle_{\mathcal{X}(tG)}\\
& = \langle X, \Dsp_{tG} Z\rangle_{\mathcal{X}(G)},
\end{align*}
which yields item 1. Observe that if $\alpha = ij/kl$ then,
\begin{align*}
\nabla_{tG}\nabla\phi(\alpha) & = [\phi(l)-\phi(k)]-[\phi(j)-\phi(i)]\\ 
& = [\phi(l)-\phi(j)]-[\phi(k)-\phi(i)] = \bigtriangledown_{tG}\nabla\phi(\alpha),
\end{align*}
which is item 2. For item 3 we have,
$$
\langle \Delta^{+,-}X , Y\rangle_{\mathcal{X}(G)} = \langle \nabla_{tG} X, \bigtriangledown_{tG} Y\rangle_{\mathcal{X}(tG)} = \langle X, \Delta^{-,+}y\rangle_{\mathcal{X}(G)},
$$
and the remaining cases are proved similarly. 

\mpar
All but the first identity in item 4 result from changes of variable in the sums defining divergence and dispersion. We prove the divergence identities first and then the dispersion identities. 

\mpar
Note that,
\begin{align*}
\Div_{tG}\sigma Z(u) & = \sum_{\pi(\alpha)=u}\negthickspace\sigma Z(\overline{\alpha}) -\sigma Z(\alpha)\\
& = \sum_{\pi(\alpha)=u}\negthickspace Z(\alpha)-Z(\overline{\alpha}) =-\Div_{tG} Z(u),
\end{align*}
which is the first divergence identity. 

\mpar
Next, observe that $\pi(\alpha)=u$ if and only if $\pi(d\sigma(\alpha))=\overline{u}.$ Note also that $\sigma$ and $d\sigma$  commute so that $d\sigma Z(\sigma(u)) = \sigma Z(d\sigma(u)).$ Thus,
\begin{align*}
\Div_{tG} d\sigma Z(u) & = \sum_{\pi(\alpha)=u} \negthickspace d\sigma Z(\overline{\alpha}) -d\sigma Z(\alpha)\\
& = \sum_{\pi(\alpha)=u}\negthickspace \sigma Z(d\sigma(\alpha))- Z(d\sigma(\alpha))\\
& = \sum_{\pi(\beta)=\overline{u}}\negthickspace\sigma Z(\beta)-Z(\beta)\\
& = \sum_{\pi(\beta)=\overline{u}} \negthickspace Z(\overline{\beta}) - Z(\beta)\\
& = \Div_{tG} Z(\overline{u})= \sigma\Div_{tG} Z(u).
\end{align*}

Proof of the dispersion identities relies on the facts that,
\begin{align}
d\pi_+(\alpha) = u  & \iff d\pi_+(\sigma(\alpha))=\overline{u}\iff d\pi(d\sigma(\alpha)) = \overline{u}\\
d\pi(\alpha) = u  & \iff \thickspace d\pi(\sigma(\alpha))=\overline{u}\thickspace\,\iff d\pi_+(d\sigma(\alpha))=u.
\end{align}
These facts are easy to verify. For example, $d\pi_+(\alpha)=ij$ if and only if $\alpha = *i/*j,$ if and only if $\sigma(\alpha)=*j/*i,$ if and only if $d\pi_+(\sigma(\alpha))=ji.$ The other cases are similar and so omitted.
We have,
\begin{align*}
\Dsp_{tG} \sigma Z(u) & = \sum_{d\pi_+(\alpha)=u}\negthickspace\negthickspace \sigma Z(\alpha)\,\, - \negthickspace\sum_{d\pi(\alpha)=u}\negthickspace\negthickspace\sigma Z(\alpha)\\
& = \sum_{d\pi_+(\beta)=\overline{u}}\negthickspace\negthickspace\sigma Z(\sigma(\beta))\,\, - \negthickspace\sum_{d\pi(\beta)=\overline{u}}\negthickspace\negthickspace\sigma Z(\sigma(\beta)) \\
& = \sum_{d\pi_+(\beta)=\overline{u}} \negthickspace\negthickspace Z(\beta)\,\, - \negthickspace\sum_{d\pi_(\beta)=\overline{u}}\negthickspace\negthickspace Z(\beta)\\
& = \Dsp_{tG} Z(\overline{u}) =\sigma\Dsp_{tG} Z(u).
\end{align*}

Finally, we have,
\begin{align*}
\Dsp_{tG} d\sigma Z(u) & = \sum_{d\pi_+(\alpha)=u}\negthickspace d\sigma Z(\alpha) \, - \negthickspace\sum_{d\pi(\alpha)=u} \negthickspace d\sigma Z(\alpha)\\
& = \sum_{d\pi(\beta)=u} \negthickspace d\sigma Z(d\sigma(\beta))\, - \negthickspace\sum_{d\pi_+(\beta)=u} \negthickspace d\sigma Z(d\sigma(\beta))\\
& = \sum_{d\pi(\beta)=u} \negthickspace Z(\beta) \,- \negthickspace\sum_{d\pi_+(\beta)=u}\negthickspace  Z(\beta) = -\Dsp_{tG} Z(u),
\end{align*}

It's elementary that $\theta^2$ is the identity, so $\theta$ extends to an involution of $\mathcal{X}(tG).$ The key to item 5, then, is the observation that $d\pi_+(\alpha)=\pi_+(\theta(\alpha))$ and $d\pi(\alpha)=\pi(\theta(\alpha)).$ If these identities are true, then the sums defining $\Dsp_{tG}Z$ equal the sums defining $\Div_{tG}\theta Z,$ and vice versa, by an elementary change of variables. 

\mpar
So let's check them. If $\omega(\alpha)=0,1$ then there are vertices $i,j, k\in V_G$ such $\alpha=ij/jk$ and we have $d\pi_+(\alpha) = jk= \pi_+(\alpha) =\pi_+(\theta(\alpha))$ and $d\pi(\alpha)=ij=\pi(\alpha)=\pi(\theta(\alpha)).$ On the other hand, if $\omega(\alpha)=-1$ and $\alpha = ij/ki$ then $\sigma d\sigma(\alpha) = ik/ji.$ Then we have $d\pi_+(\alpha) = ji = \pi_+(\sigma d\sigma(\alpha))=\pi_+(\theta(\alpha))$ and $d\pi(\alpha) = ik = \pi(\sigma d\sigma(\alpha)) =\pi(\theta(\alpha)),$ as required. 
\end{proof}

\begin{theorem}
The adjoint of $a(Z)\in DOp^2(G)$ is given by the formula,
$$
a(Z)^* = a(\sigma d\sigma Z) + \sigma\Dsp_{tG}Z +\sigma\Div_{tG} Z+ m(\Div\Div_{tG} Z).
$$

\begin{proof}
We offer two proofs of the formula based on the first and fourth parts of Lemma 8.1, beginning with $\DIV Z^\top.$  It's natural to guess that $a(\sigma d\sigma Z)$ is related to $a(Z)^*$ because $\sigma d\sigma(\alpha)$ is the reverse of $\alpha$ in the sense that $\sigma d\sigma(ij/kl) = lk/ji.$ This suggests calculating $\DIV(\sigma d\sigma Z)$ and comparing it to $(\DIV Z)^{\top}.$ 

\mpar
According to Proposition 5.3.2,
$$
\DIV(\sigma d\sigma Z)_i(j) = \lozenge(\sigma d\sigma Z)_i(j) + 
\begin{cases}
\pi^2(\sigma d\sigma Z)(i), & \text{$j=i,$}\\
-d\pi(\sigma d\sigma Z)(ij) - \pi(\sigma d\sigma Z)(ij), & \text{$d(i,j)=1,$}\\
0, & \text{$d(i,j)=2,$}
\end{cases}
$$
so we evaluate these terms one by one. First,
$$
\lozenge(\sigma d\sigma Z)_i(j) \thickspace = \negthickspace\sum_{\substack{\pi^2(\alpha)=i\\ \pi_+^2(\alpha)=j}}\negthickspace Z(\sigma d\sigma(\alpha)) \thickspace = \negthickspace\sum_{\substack{\pi^2(\beta)=j\\ \pi_+^2(\beta)=i}} \negthickspace Z(\beta) = \thickspace\lozenge Z_j(i),
$$
and,
$$
\pi^2(\sigma d\sigma Z)(i) = \sum_{\pi^2(\alpha)=i}Z(\sigma d\sigma(\alpha)) = \sum_{\pi_+^2(\beta)=i}Z(\beta) = \pi_+^2 Z(i).
$$
For the remaining terms, note that $d\pi(\alpha)=ik$ if and only if $d\pi_+(\sigma d\sigma(\alpha))=ki$ and $\pi(\alpha)=ij$ if and only if $\pi_+(\sigma d\sigma(\alpha)) = ji,$ as one easily checks by considering a generic vertex $\alpha = ij/kl.$ Then,
$$
d\pi(\sigma d\sigma Z)(ij)  = \sum_{d\pi(\alpha)=ij} Z(\sigma d\sigma(\alpha)) = \sum_{d\pi_+(\beta)=ji} Z(\beta) = d\pi_+ Z(ji),
$$
and,
$$
\pi(\sigma d\sigma Z)(ij)  = \sum_{\pi(\alpha)=ij} Z(\sigma d\sigma(\alpha)) = \sum_{\pi_+(\beta)=ji} Z(\beta) = \pi_+ Z(ji).
$$
Therefore, we have,
$$
\DIV(\sigma d\sigma Z)_i(j) = \lozenge Z_j(i) + 
\begin{cases}
\pi_+^2 Z(i), & \text{$j=i,$}\\
-d\pi_+Z(ji) - \pi_+Z(ji), & \text{$d(i,j)=1,$}\\
0, & \text{$d(i,j)=2.$}
\end{cases}
$$
On the other hand,
\begin{align*}
(\DIV Z)^\top_i(j) & = \DIV Z_j(i) \\
& = \lozenge(Z)_j(i) + 
\begin{cases}
\pi^2(Z)(i), & \text{$j=i,$}\\
-d\pi(Z)(ji) - \pi(\sigma d\sigma Z)(ji), & \text{$d(i,j)=1,$}\\
0, & \text{$d(i,j)=2,$}
\end{cases}
\end{align*}
and it follows that,
\begin{align*}
a(Z)^*\phi(i)- a(\sigma d\sigma Z)\phi(i) &  = \sum_{j\in V_G} (\DIV Z)_i^\top(j)\phi(i) - \sum_{j\in V_G} \DIV Z_i(j)\phi (j)\\
& = \sum_{j\in V_G} L(i,j)\phi(j)
\end{align*}
where,
$$
L(i,j) = \begin{cases}
\pi^2 Z(i)-\pi_+^2 Z(i), & j=i\\
[\pi_+ Z(ji)-\pi Z(ji)] + [d\pi_+ Z(ji)-d\pi Z(ji)]. & d(i,j)=1,\\
0, & d(i,j)=2.
\end{cases}
$$
Thus $L= a(Z)^*-a(\sigma d\sigma Z)$ is a first order operator, since the flux terms vanish.
\mpar
Recall that every first order operator has the form $L=X+m(\nu)$ for some $X\in\mathcal{X}(G)$ and $\nu\in C(G),$ specifically,
\begin{align*}
\nu(i) &=\sum_{j\in V_G} L(i,j)\\
X(u) & = L(\pi(u), \pi_+(u)),
\end{align*}

Thus,
\begin{align*}
\nu(i) & =  \pi^2 Z(i)-\pi_+^2 Z(i) +\sum_{\pi_+(u)=i}[\pi_+ Z(u)-\pi Z(u)] + [d\pi_+ Z(u)-d\pi Z(u)]\\
& = \pi^2 Z(i)-\pi_+^2 Z(i) +\pi_+^2 Z(i) -\pi_+\circ\pi Z(i) +\pi_+\circ d\pi_+ Z(i)-\pi_+\circ d\pi Z(i).
\end{align*}
Observe that if $\alpha=ij/kl$ then,
$$
\pi_+\circ d\pi(\alpha)=\pi_+(ik)=k=\pi\circ\pi_+(\alpha)
$$
and,
$$
\pi_+\circ d\pi_+(\alpha)=\pi_+(jl)=l=\pi_+^2(\alpha)
$$
and it follows that,
\begin{align*}
\nu(i)& =\pi_+^2 Z(i)-\pi\circ\pi_+ Z(i)-\pi_+\circ\pi Z(i)+\pi^2 Z(i)\\
&  =\Div\Div_{tG}Z(i).
\end{align*}
On the other hand,
\begin{align*}
X(u) & = [d\pi_+Z(\overline{u}) - d\pi Z(\overline{u})] + [\pi_+Z(\overline{u})-\pi Z(\overline{u})]\\
& = \Dsp_{tG} Z(\overline{u})+ \Div_{tG} Z(\overline{u})\\
& = \sigma\Dsp_{tG}Z(u)+\sigma\Div_{tG}Z(u),
\end{align*}

which establishes the formula.

\mpar
While our second proof is a bit more intricate, it has the virtue of calculating $a(Z)^*$ from first principles. Recall that the adjoint of $Z\in\mathcal{X}(tG)$ is $Z^*=\sigma Z+m(\Div_{tG}Z)$ and the adjoint of $\pi\colon\mathcal{X}(G)\to C(G)$ is $\pi^*\colon C(G)\to\mathcal{X}(G)$ where $\pi^*\phi(u) =\phi\circ\pi(u).$  Then,
\begin{align*}
a(Z)^*\phi(i) & = (\pi\circ Z\circ \nabla)^*\phi(i)\\
& = (\Div\circ \,Z^*\negthinspace\circ\pi^*)\phi(i)\\
& = \Div(\sigma Z(\pi^*\phi))(i) + \Div(\pi^*\phi\colon\negthickspace\Div_{tG} Z))(i)\\
& = I+II.
\end{align*}
Remember that $\pi^*\phi$ is a vector field on $G$ hence $Z(\pi^*\phi)$ is also a vector field on $G.$ Also remember that $\pi(\beta) = \overline{u}$ if and only if $\pi(d\sigma(\beta))=u,$ a fact we use in a change of variables argument below. We have,
\begin{align*}
I & = \sum_{\pi(u)=i}[\sigma Z(\pi^*\phi)(\overline{u}) - \sigma Z(\pi^*\phi)(u)]\\
& = \sum_{\pi(u)=i}\,\sum_{\pi(\beta)=\overline{u}}\sigma Z(\beta) d(\pi^*\phi)(\beta) - \sum_{\pi(u)=i}\,\sum_{\pi(\alpha)=u} \sigma Z(\alpha) d(\pi^*\phi)(\alpha)\\
& = \sum_{\pi(u)=i}\,\sum_{\pi(\alpha)=u}\sigma Z(d\sigma(\alpha)) d(\pi^*\phi)(d\sigma(\alpha)) - \sum_{\pi(u)=i}\,\sum_{\pi(\alpha)=u} \sigma Z(\alpha) d(\pi^*\phi)(\alpha)\\
& = \sum_{\pi(u)=i}\,[\negthinspace\sum_{\pi(\alpha)=u}\sigma d\sigma Z(\alpha) d(\pi_+^*\phi)(\alpha) - \sum_{\pi(\alpha)=u} \sigma Z(\alpha) d(\pi^*\phi)(\alpha)\,]\\
& = \sum_{\pi^2(\alpha)=i} [\sigma d\sigma Z(\alpha) d(\pi_+^*\phi)(\alpha) - \sigma Z(\alpha) d(\pi^*\phi)(\alpha)\,].
\end{align*}

In the second last line we used the fact that $d(\pi^*\phi)(d\sigma(\alpha)) = d\pi^*_+\phi(\alpha).$ To see it suppose $\alpha =uv.$ Then $d\sigma(\alpha) =\overline{u}\overline{v}$ and we have,
\begin{align*}
d\pi^*\phi(d\sigma(\alpha)) & = d\pi^*\phi(\overline{u}\overline{v}) = [\pi^*\phi(\overline{v}) - \pi^*\phi(\overline{u})]\\
& = \phi(\pi(\overline{v}))-\phi(\pi(\overline{u}))\\
& = \phi(\pi_+(v))-\phi(\pi_+(u)) = d\pi_+^*\phi(\alpha)).
\end{align*}

Adding and subtracting $\sigma d\sigma Z (d\pi^*\phi)(\alpha)$ gives,
\begin{align*}
I & = \sum_{\pi^2(\alpha)=i}(\sigma d\sigma Z(\alpha) [d\pi_+^*\phi(\alpha)-d\pi^*\phi(\alpha)] + [\sigma d\sigma Z(\alpha) - \sigma Z(\alpha)]d\pi^*\phi(\alpha))\\
& = \sum_{\pi^2(\alpha)=i}(\sigma d\sigma Z(\alpha)\bigtriangledown_{tG}\nabla\phi(\alpha) + \sum_{\pi(u)=i}\sum_{\pi(\alpha)=u}[\sigma d\sigma Z(\alpha) - \sigma Z(\alpha)]d\phi(d\pi(\alpha)]\\
& = III + IV.
\end{align*}

Now $\bigtriangledown_{tG}\nabla\phi(\alpha)=\nabla_{tG}\nabla\phi(\alpha) = d^2\phi(\alpha),$ hence $III=a(\sigma d\sigma Z)\phi(i).$ To calculate $IV$ observe that $\pi^2(\alpha)=\pi\circ d\pi(\alpha)$ hence,
\begin{align*}
\{\alpha\in V_{t^2G}\mid \pi^2(\alpha)=i\} & = \{\alpha\in V_{t^2G}\mid \pi(\alpha)=u, \pi(u)=i\}\\
&  = \{\alpha\in V_{t^2G}| d\pi(\alpha)= u, \pi(u)=i\}.
\end{align*}
We use this observation to justify a change of variables in the double sum below. Specifically,
\begin{align*}
IV & = \sum_{\pi(u)=i}\sum_{\pi(\alpha)=u}[\sigma d\sigma Z(\alpha) - \sigma Z(\alpha)]d\phi(d\pi(\alpha))\\
& = \sum_{\pi(u)=i}\sum_{ d\pi(\beta) = u}[\sigma d\sigma Z(\beta)-\sigma Z(\beta)]d\phi(u)\\
& = \sum_{\pi(u)=i}d\phi(u)\sum_{d\pi(\beta)=u}[\sigma Z(d\sigma(\beta))-\sigma Z(\beta)].
\end{align*}

Next, observe that $d\pi(\beta) = d\pi^+(d\sigma(\beta))$ or, equivalently, $d\pi_+(\alpha) = d\pi(d\sigma(\alpha)).$ To see it note that $d\pi(\beta)=ij$ if and only if $\beta = i*/j*,$ if and only if $d\sigma(\beta) = *i/*j,$ if and only if $d\pi^+(d\sigma(\beta)) = ij.$ It follows that,
\begin{align*}
\sum_{d\pi(\beta)=u}[\sigma Z(d\sigma(\beta))-\sigma Z(\beta)]  & = \sum_{d\pi^+(\alpha)=u}\sigma Z(\alpha) - \sum_{d\pi(\beta)=u}\sigma Z(\beta)\\
& =\Dsp_{tG}\sigma Z(u),
\end{align*}
and therefore,
$$
IV = \Dsp_{tG} \sigma Z\phi(i) = \sigma\Dsp_{tG}Z\phi(i),
$$
according to Proposition 8.4.4.

\mpar
Turning to term $II,$ we have,
\begin{align*}
II & = \Div (\pi^*\phi\colon\negthickspace\Div_{tG}Z)(i)\\
& = \sum_{\pi(u)=i}[\pi^*\phi(\overline{u})\Div_{tG} Z(\overline{u}) -\pi^*\phi(u)\Div_{tG} Z(u)]\\
& = \sum_{\pi(u)=i}[\phi(\pi(\overline{u}))\sigma\Div_{tG} Z(u) - \phi(\pi(u))\Div_{tG} Z(u) \pm \phi(i)\,\sigma \Div_{tG}Z(u)]\\
& =\sum_{\pi(u)=i}\sigma \Div Z(u)d\phi(u) + \phi(i)\sum_{\pi(u)=i}\Div_{tG} Z(\overline{u})-\Div_{tG} Z(u)\\
& = \sigma\Div_{tG} Z\phi(i) + \phi(i)\Div^2 Z_{tG}(i).
\end{align*}
The sum $I+II$ gives the desired formula.
\end{proof}
\end{theorem}

\begin{remark}
\textit{1. Note that the formula for the adjoint of the second order differential operator $a(Z)$ is a direct analogue of the formula for the adjoint of the first order operator differential  operator $X$, namely $X^*=\sigma X+m(\Div X).$ Evidently, $a(Z)^*\in DOp^2(G)$ if an only if $\Div\Div_{tG}=0,$ in perfect analogy with the first order case.}

\mpar
\textit{2. Observe that if $Z=\sigma d\sigma Z$ then the difference $a(Z)^*- a(Z)$ is a first order operator namely $\sigma\Dsp_{ tG} Z + \sigma\Div_{tG}Z + m(\Div\Div_{tG}Z).$ This operator does not vanish, in general, based solely on the symmetry condition $Z=\sigma d\sigma Z$. So, while the symmetry condition ensures the second order part of  $a(Z)^*- a(Z)$ vanishes it is not sufficient to prove $a(Z)^*=a(Z)$ or even that $a(z)^*\in DOp^2(G).$ }
\mpar
\textit{3.  It would be interesting to find a symmetry condition in addition to $\sigma d\sigma Z=Z$ and $\Div\Div_{tG}Z=0$ that guarantees $a(Z)$ is self-adjoint. A natural guess is $\theta Z=-Z$ because then $\Dsp_{tG}Z =-\Dsp_{tG}\theta Z =-\Div_{tG}Z,$ and so $\sigma\Dsp_{tG}Z+\sigma\Div_{tG}Z=0.$ However, a moment's thought reveals the conjunction $\sigma d\sigma Z=Z$ and $\theta Z=-Z$ implies $Z=0$, leading to a triviality.}
\end{remark}

\mpar
\appendix
\section{Tangent Graphs and Related Notions}
We recall definitions and results needed for the present work without proof but with explicit references to \cite{M1} and \cite{M2}.

\subsection{Tangent Graph}
Let $G=(V_G, E_G)$ be a finite simple graph. It's tangent graph $tG = (V_{tG}, E_{tG})$ has vertex set,
$$
V_{tG}=\{(i,j)\mid \{i,j\}\in E_G\}
$$ 
and edge set, 
$$
E_{tG}=\{\, \{(i,j), (k,l)\}\mid j=k\,\, \text{or}\,\, k=i\}.
$$

\mpar
Vertices of $tG$ are directed edges of $G$ where the ordered pair $(i,j)$ labels the edge $\{i,j\}$ directed from the base point $i$ to the end point $j.$ Two directed edges of $G$ form an edge in $tG$ provided the end point of one directed edge is the base point of the other directed edge.  (\textit{\cite{M1} Definition 2.1.5 and Remark 2.4.1;} \cite{M2} \textit{Definition 2.1 and Remark 2.3})

\mpar
We adopt the notation $ij=(i,j)$ for directed edges of $G.$ For every $u\in V_G$ is there is an edge $\{i,j\}\in E_G$ such that $u=ij.$ We often use concatenation of vertices, such as $u=ij,$ to denote vertices of $tG.$ Let $\pi, \pi_+\colon V_{tG}\to V_G$ be the projections $\pi(ij)=i$ and $\pi_+(ij)=j$ so that $\pi(u)$ is the base point of $u$ and $\pi_+(u)$ is the end point of $u$. Let $\sigma\colon V_{tG}\to V_{tG}$ be the involution $\sigma(ij)=ji,$ and note we frequently use the notation $\sigma(u)=\overline{u}.$ Then $\pi, \pi_+\colon tG\to G$ are graph homomorphisms and $\sigma\colon tG\to tG$ is also a graph homomorphism. (\cite{M1} \textit{Proposition 2.3.4;} \cite{M2} \textit{Proposition 2.2})

\mpar
Evidently, $tG$ is a kind of oriented line graph in as much as the vertices of the line graph $lG$ are the edges of $G$ while the vertices of $tG$ are the directed edges of $G$ and their respective adjacency relations involve incidence of edges. However, the two notions are not equivalent. In fact $tG$ determines $lG$ but not vice versa since there is a pair of  graphs whose line graphs are isomorphic but whose tangent graphs are not. That is, $tG$ is a strictly finer graph invariant than $lG.$ (\cite{M1} \textit{Proposition 2.12.3 and Remark 2.13.3})

\mpar\mpar
There is a pretty generalization of the handshaking lemma in $G,$ namely,
$$
\sum_{i\in V_G} \Deg(i)^2 = |E_{tG}|+|E_G|.
$$
(\cite{M1} \textit{Proposition 2.10})

\mpar
Since $tG$ is a finite simple graph it has a tangent graph denoted $t^2G=t(tG).$ If $\{u,v\}$ is an edge of $tG$ then $\alpha=uv$ is a vertex in $t^2G.$ Let $\pi, \pi_+$ and $\sigma$ denote the involution and projections relative to $t^2G$. Thus, $\pi(\alpha)=\pi(uv) =u, \, \pi_+(\alpha) = \pi_+(uv)=v,$ and $\sigma(\alpha)=\sigma(uv)=vu.$ In principle, we should use notation like $\pi_{tG},\, \pi_{tG,+},$ and $\sigma_{tG}$ to distinguish them from their conterparts on $G$ but we don't in order to avoid notational clutter. While this is potentially ambiguous, context usually makes usage clear. For example, since $u,v\in V_{tG}$ there exist edges $\{i,j\}, \{k.l\}\in E_G$ such that $u=ij$ and $v=kl.$ Since $\{u,v\}\in E_{tG}$ we have either $i=l$ or $j=k.$ Said differently, we have either $\pi(u)=\pi_+(v)$ or $\pi_+(u)=\pi(v).$ Thus $\alpha=uv=ij/kl$ and, 
$$
\pi^2(\alpha)=i,\,\, \pi_+\circ\pi(\alpha)=j,\,\,\pi\circ\pi_+(\alpha)=k,\,\, \pi_+^2(\alpha)=l.
$$

\mpar
There are three types of vertex in $t^2G$: forward translations $\alpha=ij/jk$ where $k\neq i;$ backward translations $\alpha = ij/ki$ where $k\neq i;$ and reflections $\alpha=ij/ji.$ We assign an orientation to each type and write $\omega(\alpha)=1, 0, -1$ accordingly as $\alpha$ is a forward translation, reflection, or backward translation. (\cite{M1} \textit{Figure 5})
\mpar
The tangent graph is a functor in the following sense. If $h\colon G\to H$ is a graph homomorphism (or morphism) then the differential of $h$ is the map $dh\colon V_{tG}\to V_{tH},$ where $dh(ij)= h(i)h(j),$ and $dh$ extends to a morphism from $tG$ to $tH.$ In particular, $d\pi, d\pi_+\colon t^2G\to tG$ and $d\sigma\colon t^2G\to t^2G$ are morphisms. Also, $d\sigma$ is an automorphism of $t^2G,$ namely $d\sigma(uv) = \sigma(u)\sigma(v) $ which commutes with $\sigma=\sigma{tG},$ the involution of the tangent graph. In fact, they are involutions which generate a group isomorphic to $\mathbb{Z}_2\times\mathbb{Z}_2.$(\cite{M1} \textit{Proposition 2.3.4 and Proposition 2.6})
 
\subsection{Vector Bundles}
For any graph $G$ let $e_i$ be the indicator function of $i\in V_G$ and so that $e_i(j) = 1$ if $j=i$ and $e_i(j)=0$ if $i\neq j.$ When the graph in question is the tangent graph $tG$ we use the notation $e_u$ for $u\in V_{tG}$ and when it is the second tangent graph $t^2G$ we write $e_\alpha$ for $\alpha\in V_{t^2G}.$

\mpar
The tangent space to $G$ at vertex $i$ is the vector space,
$$
T_i(G)=\langle e_u\mid u\in V_{tG},\,\pi(u)=i\rangle.
$$ 
If $X_i\in T_i(G)$ then $X_i=\sum_{\pi(u)=i}X(u)e_u$ where the constants $X(u)$ are called to coefficients of $X.$ The tangent space acquires an inner product by declaring $\langle e_u, e_v\rangle = e_u(v)$ so that,
$$
\langle X_i, Y_i\rangle_{T_i(G)} = \sum_{\pi(u)=i}\sum_{\pi(v)=i}X_i(u)Y_i(v)\langle e_u, e_v\rangle = \sum_{\pi(u)=i}X_i(u)Y_i(u).
$$

The tangent bundle of $G$ is the coproduct,
$$
T(G)=\coprod_{i\in V_G} T_i(G).
$$
It inherits an adjacency relation from $G$ by declaring $X\in T_i(G)$ and $Y\in T_j(G)$ are adjacent in $T(G)$ if and only if $i$ and $j$ are adjacent in $G$. (\cite{M1} \textit{Definition 3.1;} \cite{M2} \textit{Definition 3.4})

\mpar
A section of $T(G)$ is a map $X\colon V_G\to T(G)$ such that $X_i\in T_i(G)$ for all $i\in V_G.$ A vector field on $G$ is a section of the tangent bundle and has the form,
$$
X=\sum_{u\in V_{tG}} X(u)e_u,
$$ 
for some function $X\colon V_{tG}\to\mathbb{R}.$ Observe we have conflated $e_u\in T_{\pi(u)}(G)$ with its canonical injection into the coproduct $T(G).$ The vector space of all sections of $T(G)$ is denoted $\mathcal{X}(G).$ It inherits an inner product from its constituent tangent spaces by the rule,
$$
\langle X, Y\rangle_{\mathcal{X}(G)}=\sum_{i\in V_G}\langle X_i, Y_i\rangle_{T_i(G)}.
$$
Let $C(G)$ denote the space of real valued functions on $V_G.$ Evidently, a vector field defines a function on $V_{tG}$ and every element of $C(tG)$ defines a vector field by serving as its coefficients. Therefore $\mathcal{X}(G)\cong C(tG).$ (\cite{M1} \textit{Definition 3.1;} \cite{M2} \textit{Definition 3.4})

\mpar
A vector bundle $E$ on $G$ is the coproduct of a set of vector spaces indexed by $V_G.$ Specifically, if $E_i, i\in V_G$ is a finite set of vector spaces then,
$$
E(G)=\coprod_{i\in V_G}E_i
$$
Observe that $E$ inherits an adjacency relation from $G$ by declaring $F_i\in E_i$ and $F_j\in E_j$ are adjacent in $E$ if and only if $i$ and $j$ are adjacent in $G$. A section of $E$ is a function $F\colon V_G\to E$ such that $F_i\in E_i$ for all $i\in V_G.$ The vector space of all sections of $E$ is denoted $\mathcal{X}(E)$. If each $E_i$ has an inner product then $\mathcal{X}(E)$ inherits and inner product by the rule,
$$
\langle F, H\rangle_{\mathcal{X}(E)} = \sum_{i\in V_G}\langle F_i, H_i\rangle_{E_i}.
$$ 
The second tangent space of $G$ at $i$ is the vector space,
$$
T^2_i(G)=\langle e_\alpha\mid \alpha\in V_{t^2G},\, \pi^2(\alpha)=i\rangle.
$$
The corresponding bundle is the second tangent bundle, $T^2(G)$, sections of which are called second order vector fields. The space of second order vector fields is denoted $\mathcal{X}(T^2G).$ Each $T_i^2(G)$ has a natural inner product obtained by declaring that $\{e_\alpha \mid \pi^2(\alpha)=i\}$ is an orthonormal basis and $\mathcal{X}(T^2G)$ inherits an inner product from them. (\cite{M1} \textit{Definition 3.10})

\mpar
\textbf{Notation.} \textit{1. Strictly speaking, the space of vector fields should be denoted $\mathcal{X}(T(G))$ rather than $\mathcal{X}(G)$ but we adopt this notation to reduce notational clutter.}

\mpar
\textit{2. Functions on a graph can be multiplied pointwise and we write their product as $\phi\psi$. Functions can multiply vector fields and we write as their product as $\phi X$ or sometimes $\phi\cdot\negthinspace X,$ for emphasis' sake, where $(\phi X)(u)=\phi(\pi(u)) X(u).$ Note, however, there is some ambiguity when the functions are on $tG$ rather than $G.$ A function on $tG$ is canonically associated with a vector field on $G$ so the concatenation $X Y$ could mean their pointwise product as functions on $tG$ or their composition as operators on $C(G).$ To disambiguate we reserve concatenation to refer to composition: $XY\phi= X(Y\phi)$ and use the notation $X\colon\negthinspace Y$ to denote pointwise product: $X\colon\negthinspace Y(u)= X(u)Y(u).$}

\subsection{Operators}
Every linear transformation $L\colon C(G)\to C(G)$ has the form,
$$
L\phi(i) = \sum_{i\in V_G}L(i,j)\phi(j).
$$
We are interested in the structure of these operators as a function of distance in $G$ and we say that $L$ is of order $k$ provided $L(i,j)=0$ if $d(i,j)>k.$ Evidently, zeroth order operators are multiplier operators of the form,
$$
m(\nu)\phi(i) = \nu(i)\phi(i),
$$
for some function $\nu\in C(G).$ 

\mpar
We say $L$ is a differential operator provided $L 1(i) = \sum_{j\in V_G} L(i,j) = 0$ for all $i\in V_G.$ Every operator has a canonical decomposition as the sum of a differential operator and a multiplier operator, as follows. Let $\nu(i) = \sum_{j\in V_G} L(i,j)$ and let $L^\prime(i,j) = L(i,j)-\nu(i).$ Clearly, $L^\prime$ is a differential operator and $L=L^\prime + m(\nu).$

\mpar
The space of $k^{\textit{th}}$-order operators is denoted $Op^k(G)$ and the subspace of differential differential operators is denoted $DOp^k(G).$ First order differential operators are in one-to-one correspondence with vector fields by the rule,
$$
X(u)=L(\pi(u), \pi_+(u)),\, u\in V{tG},
$$
hence, $DOp^1(G)\cong\mathcal{X}(G).$ (\cite{M1} \textit{Definition 3.3;} \cite{M2} \textit{Proposition 2.7.4})

\mpar
Let $d\colon C(G)\to C(tG)$ be defined by the rule, $d\phi(u)=\phi(\pi_+(u))-\phi(\pi(u)).$ It's adjoint with respect to the natural inner products is $d^* = \pi_+ - \pi$ where, for every $f\in C(tG),$
$$
\pi_+f(i) = \sum_{\pi_+(u)=i}f(u)\quad\text{and}\quad \pi f(i)=\sum_{\pi(u)=i} f(u).
$$
This means, (\cite{M1} \textit{Proposition 3.2.1;} \cite{M2} \textit{Proposition 2.7.1}),
$$
\langle d\phi, f\rangle_{C(tG)} = \langle \phi, (\pi_+-\pi)f\rangle_{C(G)}.
$$
Note that since $\pi,\pi_+\colon C(tG)\to C(G)$ they have adjoints $\pi^*,\pi_+^*\colon C(G)\to C(tG)$ and and elementary calculation shows that are given by the rule $\pi^*\phi(u) =\phi\circ\pi(u)$ and $\pi_+^*\phi(u)=\phi\circ\pi_+(u).$ 

\mpar
Let the gradient $\nabla\colon  C(G)\to\mathcal{X}(G)$ and divergence $\Div\colon\mathcal{X}(G)\to C(G)$ be defined by the formulas,\ $\nabla\phi(u)=d\phi(u)$ and,
$$
\Div X_i = \pi_+X(i)-\pi X(i)=\negthickspace\sum_{\pi_+(u)=i} \negthickspace X(u) - \negthickspace\sum_{\pi(u)=i}\negthickspace X(u).
$$
Then (\cite{M1} \textit{Proposition 3.2.2;} \cite{M2} \textit{Proposition 2.7.2} ) the divergence and gradient are mutually adjoint in the sense that,
$$
\langle\nabla\phi, X\rangle_{\mathcal{X}(G)}=\langle\phi,\Div X\rangle_{C(G)}.
$$

The Laplacian is the operator $\Delta=\Div\circ\nabla\colon C(G)\to C(G).$ It is non-negative and self adjoint, $\Ker(\Delta)$ is the space of functions constant on each connected component of $G,$ and,
$$
\Delta\phi(i) = -2\sum_{\pi(u)=i} d\phi(u).
$$ 
Consequently, $\Delta$ is a first order differential operator associated with the constant vector field $X(u)=-2.$ (\cite{M1} \textit{Proposition 3.2.1,} \cite{M2} \textit{Proposition 2.7.3})

\mpar
Let $X$ be a vector field thought of as a first order differential operator. Then its adjoint in the sense that $\langle \phi, \psi\rangle_{C(G)} =\langle\phi, X^*\psi\rangle_{C(G)}$
is the operator,
$$
X^*=\sigma X + m(\Div X),
$$ 
where $\sigma X(u) = X(\sigma(u)) = X(\overline{u})$ (\cite{M2} \textit{Proposition 2.7.5}). We sometimes use the notation $\sigma X=\overline{X}$ and switch back and forth without special mention. Evidently, $X^*\in DOp^1(G)$ if and only if $X$ is a divergence free vector field.  Note that we will use the adjoint formula where the graph in question is $tG$ rather than $G$, namely $Z^*=\sigma Z+m(\Div_{tG} Z)$ where $Z\in\mathcal{X}(tG).$

\mpar
Since $tG$ is  finite simple graph, it has its own associated gradient $\nabla_{tG},$ divergence $\Div_{tG}, $ and Laplacian $\Delta_{tG}.$ It's a natural step to use the Laplacian on $\Delta_{tG}$ to define a second order differential operator on $G$, namely, $\Div\circ\Delta_{tG}\circ\nabla$ which is one of the motivating examples for the present work.

\mpar
\section{$t^2P_2$ and $tP_2^2$ are incommensurate}
If $\phi$ is a surjective homomorphism from $t^2P_2$ to $tP_2^2$ then the six sets $\phi^{-1}(u), u\in V_{tP_2^2}$ are non-empty, disjoint independent sets in $t^2P_2$ and they partition $V_{t^2P_2},$ a set with eight elements. So either four of these independent sets are singletons and two are doubletons or five are singletons and one is a tripleton. If these sets represent the vertices of $tP_2^2$ then there must be at least one edge in $t^2P_2$ between some pair of elements of these sets, if there is an edge in $tP_2^2$ between the vertices these sets represent. We argue by cases to show that none of the graphs formed by these partitions with these edge relations is a triangular wedge.  

\mpar
For the reader's convenience we reproduce part of Figure 2 to show the triangular wedge $tP_2^2$ and the cube $t^2P_2.$ Note that in the argument below we use the vertex labels of $a,b,c,d,e,f,g,h$ of the cube as defined in the caption.

\begin{figure}[h]
\begin{tikzpicture}
\draw[fill=black] (0.5,-4) circle (2pt);
\draw[fill=black] (3,-4) circle (2pt);
\draw[fill=black] (2,-3.5) circle (2pt);
\draw[fill=black] (0.5,-2.5) circle (2pt);
\draw[fill=black] (3,-2.5) circle (2pt);
\draw[fill=black] (2,-2) circle (2pt);

\node at (2, -1.7) {13};
\node at (2,-3.8) {31};
\node at (0.5,-2.2) {21};
\node at (0.5, -4.3) {12};
\node at (3,-2.2) {32};
\node at (3,-4.3) {23};

\draw[thick] (0.5,-4) -- (3,-4) -- (3, -2.5) -- (0.5,-2.5) -- (0.5, -4);
\draw[thick] (0.5, -4) -- (2,-3.5) -- (3, -4);
\draw[thick] (0.5, -2.5) -- (2, -2) -- (3, -2.5);
\draw[thick] (2,-3.5) -- (2, -2);

\draw[fill=black] (4.5,-4) circle (2pt);
\draw[fill=black] (6.5,-4) circle (2pt);
\draw[fill=black] (4.5,-2.5) circle (2pt);
\draw[fill=black] (6.5,-2.5) circle (2pt);
\draw[fill=black] (5.5,-3.5) circle (2pt);
\draw[fill=black] (7.5,-3.5) circle (2pt);
\draw[fill=black] (5.5,-2) circle (2pt);
\draw[fill=black] (7.5,-2) circle (2pt);

\node at (4.5,-4.3) {\textit{a}};
\node at (6.5,-4.3) {\textit{b}};
\node at (4.5,-2.2) {\textit{e}};
\node at (6.5,-2.25) {\textit{f}};
\node at (5.5,-3.8) {\textit{d}};
\node at (7.5, -3.8) {\textit{c}};
\node at (5.5, -1.7) {\textit{h}};
\node at (7.5, -1.7) {\textit{g}};

\draw[thick] (4.5,-4) -- (6.5,-4) -- (6.5, -2.5) -- (4.5, -2.5) -- (4.5,-4);
\draw[thick] (5.5,-3.5) -- (7.5,-3.5) -- (7.5, -2) -- (5.5, -2) -- (5.5,-3.5);
\draw[thick] (4.5, -4) -- (5.5,-3.5);
\draw[thick] (4.5, -2.5) -- (5.5,-2);
\draw[thick] (6.5, -4) -- (7.5,-3.5);
\draw[thick] (6.5, -2.5) -- (7.5 ,-2);
\end{tikzpicture}
\caption{$a=12/23,$ $b=23/32,$ $c=32/21,$ $d=21/12,$\\ $e=23/12,$ $f=32/23,$ $g=21/32,$ $h=12/21.$}
\end{figure}

\mpar
We can assume $a$ is in a tripleton by symmetry of the cube. Then the remaining two vertices must come from the set $\{c, f, g, h\},$ since the tripleton is an independent set. But $g$ is a nearest neighbor of $c, f,$ and $h$ so it can't be in the tripleton, since then there would be an edge between two of its elements. There are three possibilities, $\{a, c, f\}, \{a,c, h\},$ and $\{a, f, h\}.$ By symmetry of the cube, the first two possibilities lead to the same graph so there are just two cases to check,
\begin{align*}
I \quad & \{a,c,f\}, b, d, e, g, h\\
II \quad & \{a, f, h\}, b, c, d, e, g
\end{align*}
The remaining cases involve choosing two pairs of diagonally opposite vertices. We use the symmetry of the cube to assume $a$ is in one of the doubletons and to identify cases leading to the same graphs. Observe $\{a,c\}, \{a, f\}, \{a,g\}$ and $\{a, h\}$ are the only possible doubletons containing $a.$ Note that $\{a, g\}$ is a diagonal of the cube whereas the other three are diagonals of a face of the cube, so by symmetry there are just two possibilities, say, $\{a, c\}$ and $\{a, g\}.$ There are subcases depending on the geometric relationship of the second doubleton to the one containing $a$.

\mpar
Suppose $\{a, c\}$ is one of the doubletons. Then the other doubleton can be in the same face, $\{b, d\},$ adjacent faces, $\{b,e\}, \{b,g\}, \{d,e\}, \{d, g\},$ opposite face, $\{e,g\},$ $\{f,h\},$ or diagonal of the cube, $\{b, h\}, \{d,f\}.$ By symmetry, $\{b,e\}$ and $\{d, g\}$ lead to the same graph as do $\{b,g\}$ and $\{d, e\}.$ Also by symmetry, the cube diagonals lead to the same graph. However, doubletons in the opposite face are not related by symmetry because one is parallel to $\{a,c\}$ and the other is anti-parallel. Thus, there are six more cases to check,
\begin{align*}
III\quad & \{a,c\}, \{b,d\}, e, f, g, h\\
IV\quad & \{a,c\}, \{b,e\}, d, f, g, h\\
V\quad &  \{a,c\}, \{b,g\}, d, e, f, h\\
VI\quad & \{a,c\}, \{e,g\}, b, d, f, h\\
VII\quad & \{a,c\}, \{f,h\}, b, d, e, g\\
VIII\quad & \{a, c\}, \{b, h\}, d, e, f, h
\end{align*}
Now suppose $\{a,g\}$ is one of the doubletons. The other doubletons can be diagonals of the cube, $\{b, h\}, \{c,e\}, \{d, f\},$ or diagonals of faces, $\{b,d\}, \{b,e\}, \{c,f\}, \{c,h\}, \{d,e\},$ $\{f,h\}.$ By symmetry, the cube diagonals lead to the same graph as do the three pairs of diagonals in opposite faces: $\{b,d\}, \{f,h\}$ and $\{b,e\}, \{c,h\}$ and $\{c,f\}, \{d,e\}$. Thus there are four additional cases to check,
\begin{align*}
IX\quad & \{a,g\}, \{b,h\}, c, d, e, f\\
X\quad & \{a,g\}, \{b, d\}, c, e, f, h\\
XI\quad & \{a,g\}, \{b,e\}, c, d, f, h\\
XII\quad & \{a,g\}, \{c,f\}, b, d, e, h
\end{align*}

and these exhaust all the possibilities.
\begin{figure}[h]
\begin{tikzpicture}
\draw[fill=black] (0,0.5) circle (2pt);
\draw[fill=black] (0.5,0) circle (2pt);
\draw[fill=black] (0.5,1) circle (2pt);
\draw[fill=black] (1.5,0) circle (2pt);
\draw[fill=black] (1.5,1) circle (2pt);
\draw[fill=black] (2,0.5) circle (2pt);
\node at (-0.3,0.5) {\textit{f}};
\node at (0.5,-0.3) {\textit{e}};
\node at (0.5, 1.3) {\textit{ag}};
\node at (1.5, -0.3) {\textit{d}};
\node at (1.5, 1.3) {\textit{bh}};
\node at (2.3,0.5) {\textit{c}};
\draw[thick] (1.5, 1) -- (0,0.5);
\draw[thick] (1.5, 1) -- (0.5,0);
\draw[thick] (1.5, 1) -- (1.5, 0);
\draw[thick] (1.5, 1) -- (2,0.5);
\draw[thick] (0.5, 0) -- (0,0.5);
\draw[thick] (1.5, 0) -- (2,0.5);
\draw[thick] (0.5, 1) -- (0,0.5);
\draw[thick] (0.5, 1) -- (0.5,0);
\draw[thick] (0.5, 1) -- (1.5, 0);
\draw[thick] (0.5, 1) -- (1.5, 1);
\draw[thick] (0.5, 1) -- (2, 0.5);

\draw[fill=black] (0,3) circle (2pt);
\draw[fill=black] (0.5,2.5) circle (2pt);
\draw[fill=black] (0.5,3.5) circle (2pt);
\draw[fill=black] (1.5,2.5) circle (2pt);
\draw[fill=black] (1.5,3.5) circle (2pt);
\draw[fill=black] (2,3) circle (2pt);
\node at (-0.3,3) {\textit{h}};
\node at (0.5,2.2) {\textit{f}};
\node at (0.5, 3.8) {\textit{ac}};
\node at (1.5,2.2) {\textit{e}};
\node at (1.5,3.8) {\textit{bg}};
\node at (2.3, 3) {\textit{d}};
\draw[thick] (0.5, 3.5) -- (1.5,3.5);
\draw[thick] (0.5, 3.5) -- (1.5,2.5);
\draw[thick] (0.5, 3.5) -- (2,3);
\draw[thick] (1.5, 3.5) -- (0, 3);
\draw[thick] (1.5, 3.5) -- (0.5, 2.5);
\draw[thick] (2,3) -- (0,3);
\draw[thick] (1.5, 2.5) -- (0.5, 2.5);
\draw[thick] (1.5, 2.5) -- (0, 3);

\draw[fill=black] (0,5.5) circle (2pt);
\draw[fill=black] (0.5,5) circle (2pt);
\draw[fill=black] (0.5,6) circle (2pt);
\draw[fill=black] (1.5,5) circle (2pt);
\draw[fill=black] (1.5,6) circle (2pt);
\draw[fill=black] (2,5.5) circle (2pt);
\node at (-0.3,5.5) {\textit{h}};
\node at (0.5, 4.7) {\textit{g}};
\node at (0.5, 6.3) {\textit{acf}};
\node at (1.5, 4.7) {\textit{e}};
\node at (1.5, 6.3) {\textit{b}};
\node at (2.3,5.5) {\textit{d}};
\draw[thick] (0.5, 6) -- (0.5, 5);
\draw[thick] (0.5, 6) -- (1.5, 5);
\draw[thick] (0.5, 6) -- (1.5, 6);
\draw[thick] (0.5, 6) -- (2, 5.5);
\draw[thick] (0,5.5) -- (0.5, 5);
\draw[thick] (0,5.5) -- (1.5, 5);
\draw[thick] (0,5.5) -- (2, 5.5);

\draw[fill=black] (3.1,0.5) circle (2pt);
\draw[fill=black] (3.6,0) circle (2pt);
\draw[fill=black] (3.6,1) circle (2pt);
\draw[fill=black] (4.6,0) circle (2pt);
\draw[fill=black] (4.6,1) circle (2pt);
\draw[fill=black] (5.1,0.5) circle (2pt);
\node at (2.8,0.5) {\textit{h}};
\node at (3.6,-0.3) {\textit{f}};
\node at (3.6, 1.3) {\textit{ag}};
\node at (4.6, -0.3) {\textit{e}};
\node at (4.6, 1.3) {\textit{bd}};
\node at (5.4,0.5) {\textit{c}};
\draw[thick] (4.6, 1) -- (3.1, 0.5);
\draw[thick] (4.6, 1) -- (3.6, 0);
\draw[thick] (4.6, 1) -- (5.1, 0.5);
\draw[thick] (3.6, 0) -- (3.6, 1);
\draw[thick] (3.6, 0) -- (4.6, 1);
\draw[thick] (3.6, 0) -- (4.6, 0);
\draw[thick] (4.6, 0) -- (3.1, 0.5);
\draw[thick] (3.6, 1) -- (3.1, 0.5);
\draw[thick] (3.6, 1) -- (3.6, 0);
\draw[thick] (3.6, 1) -- (4.6, 0);
\draw[thick] (3.6, 1) -- (4.6, 1);
\draw[thick] (3.6, 1) -- (5.1,0.5);

\draw[fill=black] (3.1,3) circle (2pt);
\draw[fill=black] (3.6,2.5) circle (2pt);
\draw[fill=black] (3.6,3.5) circle (2pt);
\draw[fill=black] (4.6, 2.5) circle (2pt);
\draw[fill=black] (4.6, 3.5) circle (2pt);
\draw[fill=black] (5.1,3) circle (2pt);
\node at (2.8,3) {\textit{h}};
\node at (3.6,2.2) {\textit{f}};
\node at (3.6, 3.8) {\textit{ac}};
\node at (4.6, 2.2) {\textit{d}};
\node at (4.6, 3.8) {\textit{eg}};
\node at (5.4,3) {\textit{b}};
\draw[thick] (3.6, 3.5) -- (4.6,3.5);
\draw[thick] (3.6, 3.5) -- (4.6,2.5);
\draw[thick] (3.6, 3.5) -- (5.1,3);
\draw[thick] (4.6, 3.5) -- (3.1,3);
\draw[thick] (4.6, 3.5) -- (3.6,2.5);
\draw[thick] (3.1, 3) -- (4.6,2.5);
\draw[thick] (3.6, 2.5) -- (5.1,3);

\draw[fill=black] (3.1,5.5) circle (2pt);
\draw[fill=black] (3.6,5) circle (2pt);
\draw[fill=black] (3.6,6) circle (2pt);
\draw[fill=black] (4.6,5) circle (2pt);
\draw[fill=black] (4.6,6) circle (2pt);
\draw[fill=black] (5.1,5.5) circle (2pt);
\node at (2.8,5.5) {\textit{g}};
\node at (3.6, 4.7) {\textit{e}};
\node at (3.6, 6.3) {\textit{afh}};
\node at (4.6, 4.7) {\textit{d}};
\node at (4.6, 6.3) {\textit{b}};
\node at (5.4,5.5) {\textit{c}};
\draw[thick] (3.6, 6) -- (3.1, 5.5);
\draw[thick] (3.6, 6) -- (3.6, 5);
\draw[thick] (3.6, 6) -- (4.6, 5);
\draw[thick] (3.6, 6) -- (4.6, 6);
\draw[thick] (3.6, 6) -- (5.1, 5.5);
\draw[thick] (4.6, 6) -- (5.1, 5.5);
\draw[thick] (3.1, 5.5) -- (5.1, 5.5);
\draw[thick] (5.1, 5.5) -- (4.6, 5);

\draw[fill=black] (6.2,0.5) circle (2pt);
\draw[fill=black] (6.7,0) circle (2pt);
\draw[fill=black] (6.7,1) circle (2pt);
\draw[fill=black] (7.7,0) circle (2pt);
\draw[fill=black] (7.7,1) circle (2pt);
\draw[fill=black] (8.2,0.5) circle (2pt);
\node at (5.9,0.5) {\textit{h}};
\node at (6.7,-0.3) {\textit{f}};
\node at (6.7, 1.3) {\textit{ag}};
\node at (7.7, -0.3) {\textit{d}};
\node at (7.7, 1.3) {\textit{be}};
\node at (8.5,0.5) {\textit{c}};
\draw[thick] (6.7, 1) -- (6.2, 0.5);
\draw[thick] (6.7, 1) -- (6.7, 0);
\draw[thick] (6.7, 1) -- (7.7, 0);
\draw[thick] (6.7, 1) -- (7.7, 1);
\draw[thick] (6.7, 1) -- (8.2, 0.5);
\draw[thick] (7.7, 1) -- (6.2, 0.5);
\draw[thick] (7.7, 1) -- (6.7, 0);
\draw[thick] (7.7, 1) -- (8.2, 0.5);
\draw[thick] (7.7, 0) -- (6.2, 0.5);
\draw[thick] (7.7, 0) -- (8.2, 0.5);

\draw[fill=black] (6.2,3) circle (2pt);
\draw[fill=black] (6.7,2.5) circle (2pt);
\draw[fill=black] (6.7,3.5) circle (2pt);
\draw[fill=black] (7.7,2.5) circle (2pt);
\draw[fill=black] (7.7,3.5) circle (2pt);
\draw[fill=black] (8.2,3) circle (2pt);
\node at (5.9,3) {\textit{g}};
\node at (6.7,2.2) {\textit{e}};
\node at (6.7, 3.8) {\textit{ac}};
\node at (7.7, 2.2) {\textit{d}};
\node at (7.7, 3.8) {\textit{fh}};
\node at (8.5,3) {\textit{b}};
\draw[thick] (6.7, 3.5) -- (6.2, 3);
\draw[thick] (6.7, 3.5) -- (6.7, 2.5);
\draw[thick] (6.7, 3.5) -- (7.7, 2.5);
\draw[thick] (6.7, 3.5) -- (8.2, 3);
\draw[thick] (7.7, 3.5) -- (6.2, 3);
\draw[thick] (7.7, 3.5) -- (6.7, 2.5);
\draw[thick] (7.7, 3.5) -- (7.7, 2.5);
\draw[thick] (7.7, 3.5) -- (8.2, 3);

\draw[fill=black] (6.2,5.5) circle (2pt);
\draw[fill=black] (6.7,5) circle (2pt);
\draw[fill=black] (6.7,6) circle (2pt);
\draw[fill=black] (7.7,5) circle (2pt);
\draw[fill=black] (7.7,6) circle (2pt);
\draw[fill=black] (8.2,5.5) circle (2pt);
\node at (5.9,5.5) {\textit{h}};
\node at (6.7, 4.7) {\textit{g}};
\node at (6.7, 6.3) {\textit{ac}};
\node at (7.7, 4.7) {\textit{f}};
\node at (7.7, 6.3) {\textit{bd}};
\node at (8.5,5.5) {\textit{e}};
\draw[thick] (6.7, 6) -- (8.2, 5.5);
\draw[thick] (6.7, 6) -- (6.7, 5);
\draw[thick] (6.7, 6) -- (7.7, 6);
\draw[thick] (7.7, 6) -- (6.2, 5.5);
\draw[thick] (7.7, 6) -- (7.7, 5);
\draw[thick] (8.2, 5.5) -- (6.2, 5.5);
\draw[thick] (8.2, 5.5) -- (7.7, 5);
\draw[thick] (6.7, 5) -- (6.2, 5.5);
\draw[thick] (6.7, 5) -- (7.7, 5);

\draw[fill=black] (9.3,0.5) circle (2pt);
\draw[fill=black] (9.8,0) circle (2pt);
\draw[fill=black] (9.8,1) circle (2pt);
\draw[fill=black] (10.8,0) circle (2pt);
\draw[fill=black] (10.8,1) circle (2pt);
\draw[fill=black] (11.3,0.5) circle (2pt);
\node at (9,0.5) {\textit{h}};
\node at (9.8,-0.3) {\textit{e}};
\node at (9.8, 1.3) {\textit{ag}};
\node at (10.8, -0.3) {\textit{d}};
\node at (10.8, 1.3) {\textit{cf}};
\node at (11.6,0.5) {\textit{b}};
\draw[thick] (9.3,0.5) -- (9.8, 0);
\draw[thick] (9.3,0.5) -- (10.8, 0);
\draw[thick] (9.3,0.5) -- (9.8, 1);
\draw[thick] (9.8,0) -- (9.8, 1);
\draw[thick] (9.8,0) -- (10.8, 0);
\draw[thick] (9.8,1) -- (10.8, 0);
\draw[thick] (9.8,1) -- (10.8, 1);
\draw[thick] (9.8,1) -- (11.3, 0.5);
\draw[thick] (10.8,0) -- (10.8, 1);
\draw[thick] (10.8,0) -- (11.3, 0.5);
\draw[thick] (10.8,1) -- (11.3, 0.5);

\draw[fill=black] (9.3,3) circle (2pt);
\draw[fill=black] (9.8,2.5) circle (2pt);
\draw[fill=black] (9.8,3.5) circle (2pt);
\draw[fill=black] (10.8,2.5) circle (2pt);
\draw[fill=black] (10.8,3.5) circle (2pt);
\draw[fill=black] (11.3,3) circle (2pt);
\node at (9,3) {\textit{g}};
\node at (9.8,2.2) {\textit{f}};
\node at (9.8, 3.8) {\textit{ac}};
\node at (10.8, 2.2) {\textit{e}};
\node at (10.8, 3.8) {\textit{bh}};
\node at (11.6,3) {\textit{d}};
\draw[thick] (9.8, 3.5) -- (9.3, 3);
\draw[thick] (9.8, 3.5) -- (10.8, 2.5);
\draw[thick] (9.8, 3.5) -- (10.8, 3.5);
\draw[thick] (9.8, 3.5) -- (11.3, 3);
\draw[thick] (10.8, 3.5) -- (9.3, 3);
\draw[thick] (10.8, 3.5) -- (9.8, 2.5);
\draw[thick] (10.8, 3.5) -- (10.8, 2.5);
\draw[thick] (10.8, 3.5) -- (11.3, 3);
\draw[thick] (9.8, 2.5) -- (10.8, 2.5);
\draw[thick] (9.8, 2.5) -- (9.3, 3);

\draw[fill=black] (9.3,5.5) circle (2pt);
\draw[fill=black] (9.8,5) circle (2pt);
\draw[fill=black] (9.8,6) circle (2pt);
\draw[fill=black] (10.8,5) circle (2pt);
\draw[fill=black] (10.8,6) circle (2pt);
\draw[fill=black] (11.3,5.5) circle (2pt);
\node at (9,5.5) {\textit{h}};
\node at (9.8, 4.7) {\textit{g}};
\node at (9.8, 6.3) {\textit{ac}};
\node at (10.8, 4.7) {\textit{f}};
\node at (10.8, 6.3) {\textit{be}};
\node at (11.5,5.5) {\textit{d}};
\draw[thick] (9.8, 6) -- (9.8, 5);
\draw[thick] (9.8, 6) -- (10.8, 6);
\draw[thick] (9.8, 6) -- (11.3, 5.5);
\draw[thick] (10.8, 6) -- (9.3, 5.5);
\draw[thick] (10.8, 6) -- (10.8, 5);
\draw[thick] (9.3, 5.5) -- (9.8, 5);
\draw[thick] (9.3, 5.5) -- (11.3, 5.5);
\draw[thick] (9.8, 5) -- (10.8, 5);

\end{tikzpicture}
\caption{Graphs $I$ to $XII$ are arranged in typewriter fashion from top left to bottom right. Labels like $\{a, c, f\}$ are shortened to $acf$ to reduce visual clutter.}
\end{figure}

\mpar
In Figure 6 these twelve graphs are displayed in a standard format by arranging the vertices in a regular hexagon, with vertices labelled counterclockise by the elements of the partition, as listed in the order above.\mpar
In every case but graph $III$ there is a vertex of degree other than $3$ hence they are not isomorphic to $tP_2^2.$ One sees by inspection that $III$ is the complete bipartite graph $K_{3,3}$ formed by the parts $\{ac, f, h\}$ and $\{bd, g, h\}.$ It has 9 edges and 6 vertices of degree 3, just as $tP_2^2$ does. Distinguishing them is not elementary as it depends on the fact that genus is an isomorphism invariant. To see they are not isomorphic observe that $tP_2^2,$ the triangular wedge, can be embedded on a sphere so it has genus 0. On the other hand, $K_{3,3}$ can be embedded on a torus but not on a sphere, so it has genus 1.

\begin{remark}
\textit{1. It's obvious that there is no surjection from $tP_2^2$ to $t^2P_2$ and just as obvious that neither $t^2P_2$ nor $tP_2^2$ can be a subgraph of the other. So, the example shows that $tP_2^2$ and $t^2P_2$ are incommensurate.}

\mpar
\textit{2. It's straightforward to check that $K_{3,3}=tK_{1,3}.$ So, there is a surjective homomorphism from $t^2P_2$, the second tangent graph of the path of length two, to $tK_{1,3}$, the tangent graph of the star graph $K_{1,3}.$ It's not clear whether this is a coincidence or part of a pattern.}
\end{remark}

\end{document}